\DeclareMathAlphabet{\pazocal}{OMS}{zplm}{m}{n}
\tikzset{>=stealth}
  \newcommand{\calC}{\mathcal{C}}
  \newcommand{\res}{\mathcal{G}}
  \newcommand{\calL}{\mathcal{L}}
  \newcommand{\calN}{\mathcal{N}}
  \newcommand{\calZ}{\mathcal{Z}}
  \newcommand{\gothic}{\mathfrak}
  \newcommand{\go}{{\gothic o}}
  \newtheorem{theorem}{Theorem}[section]
  \newtheorem{proposition}[theorem]{Proposition}
  \newtheorem{lemma}[theorem]{Lemma}
  \newtheorem{introthm}{Theorem}
  \theoremstyle{definition}
  \newtheorem{definition}[theorem]{Definition}
  \newtheorem*{claim*}{Claim}
  \newtheorem*{question*}{Question}
  \newtheorem*{answer*}{Answer}
  \newtheorem*{application*}{Application}
  \theoremstyle{remark}
  \newtheorem{remark}[theorem]{Remark}
  \newtheorem*{remark*}{Remark}
  \newcommand{\propref}[1]{Proposition~\ref{#1}}
  \newcommand{\defref}[1]{Definition~\ref{#1}}
  \newcommand{\eqnref}[1]{Equation~\eqref{#1}}
\DeclareMathOperator{\minset}{minset}
  \DeclareMathOperator{\Len}{Len}
  \DeclareMathOperator{\cent}{ctr}
  \newcommand{\pka}{\partial_{\kappa}}
\DeclareMathOperator{\diam}{diam}
  \newcommand{\param}{{\mathchoice{\mkern1mu\mbox{\raise2.2pt\hbox{$
  \centerdot$}}
  \mkern1mu}{\mkern1mu\mbox{\raise2.2pt\hbox{$\centerdot$}}\mkern1mu}{
  \mkern1.5mu\centerdot\mkern1.5mu}{\mkern1.5mu\centerdot\mkern1.5mu}}}
\DeclarePairedDelimiterX{\norm}[1]{\lvert}{\rvert}{#1}
\DeclarePairedDelimiterX{\Norm}[1]{\lVert}{\rVert}{#1}
  \renewcommand{\setminus}{{\smallsetminus}}
  \newcommand{\from}{\colon\thinspace}
\newcommand{\CAT}{\ensuremath{\operatorname{CAT}(0)}\xspace}
\newcommand{\showcomments}{yes}
\newsavebox{\commentbox}
\newcounter{acomments}
\newcounter{ccomments}
\begin{document}

\title[Sublinearly Morse Boundary of \CAT admissible groups]{Sublinearly Morse Boundary of \CAT admissible groups}

\author{Hoang Thanh Nguyen}
\address{Department of Mathematics\\
The University of Danang - University of Science and
Education\\
 459 Ton Duc Thang, Da Nang, Vietnam}
\email{nthoang.math@gmail.com}

\author   {Yulan Qing}
\address{Shanghai Center for Mathematical Sciences, Fudan University, Songhu Road 2005, Shanghai, China}
\email{yulan.qing@gmail.com}
%

  
\maketitle

\begin{abstract}
We show that if $G$ is an admissible group acting geometrically on a \CAT space $X$, then $G$ is a hierarchically hyperbolic space and its $\kappa$-Morse boundary $(\pka G, \nu)$ is a model for the Poisson boundary of $(G, \mu)$ where $\nu$ is the hitting measure associated to the random walk driven by $\mu$( with mild assumptions of $\mu$).
\end{abstract}

\section{Introduction}
Sublinearly Morse boundaries are recently constructed for all finitely generated groups \cite{QRT19, QRT20}.  It is a metrizable topological space that is a group invariant. Similar Gromov boundary of hyperbolic spaces, sublinearly Morse boundaries are particularly illuminating in revealing features of groups that contains hyperbolic-like features \cite{IZ21, MQZ20, QZ21, Zal20}. One of these feature centers around asymptotic behavior of random walk on the associated groups. In several important classes of groups, such right-angled Artin groups, relative hyperbolic groups, and the mapping class groups of surfaces of finite type, an appropriately chosen sublinear function yields a sublinearly Morse boundary that serves as a topological model for the Poisson boundaries(with mild assumptions) of the group. 

In this paper we continue to prove the connection between a geometric boundary and random walk boundary for a new class of groups: \CAT admissible groups.

\CAT admissible groups is a particular type of graph of groups that generalizes graph manifolds. A compact, orientable, irreducible 3-manifold with empty or tori boundary is called a graph manifold if it is obtained by gluing finitely many Seifert manifolds with orientable hyperbolic base orbifolds where the gluing maps between the Seifert components do not identify (unoriented) Seifert fibers up to homotopy. \CAT admissible group was first introduced by Croke-Kleiner \cite{CK02} as key counterexamples to the nice properties of Gromov boundary when the hyperbolicity assumption is dropped. This property is further studied in \cite{qing1}.

Roughly speaking, a \CAT admissible group models the JSJ structure of nonpositively curved graph manifolds where the Seifert fibration is replaced by the following central extension of a general hyperbolic group:
\begin{equation}\label{centralExtEQ}
1\to Z(G_v)=\mathbb Z\to G_v\to H_v\to 1    
\end{equation}

In this paper, we also refer to \CAT admissible groups as CKA groups (CKA stands for Croke-Kleiner admissible). This class of groups serves as a simple algebraic means to produce interesting groups from an arbitrary finite collection of $n$ \CAT hyperbolic groups.

\medskip
\noindent \textbf{A basic example}. Let $H_1$ and $H_2$ be two torsion-free hyperbolic groups which act geometrically on $\CAT$ spaces $X_1$ and $X_2$ respectively. Then $G_i = H_i \times \langle t_i \rangle$ (with $i =1,2$)  acts geometrically on the $\CAT$ space $Y_ i = X_{i} \times \mathbb R$. Any primitive hyperbolic element $h_i$ in $H_i$ gives a totally geodesic torus $T_i$ in the quotient space $Y_{i}/G_i$ with basis $([h_i], [t_i])$. We rescale $Y_ i$ so that the lengths of $[h_1]$ and of $[t_2]$ are the same, and the lengths of $[t_1]$ and $[h_2]$ are the same. Let $f \colon T_1 \to T_2$ be an isometry that flips the $\mathbb Z$-axis with the axis of an loxodromic element in $H_i$. Let $M$ be the space obtained by gluing $Y_1$ to $Y_2$ by the isometry $f$. One can show that there exists a metric on $M$ such that with respect to this metric, $M$ is a locally $\CAT$ space (see e.g. \cite[Proposition 11.6]{BH99}). By the Cartan-Hadamard Theorem, then the universal cover $\widetilde M$ with the induced length metric from $M$ is a \CAT space, and thus $\pi_1(M)$ is a \CAT admissible group and $\pi_1(M) \curvearrowright \widetilde M$ is a \CAT action.

\begin{introthm}
\label{introthm:1}

Let $G$ be a \CAT admissible group. The following holds. 
\begin{enumerate}
\item  $G$ is a hierarchically hyperbolic space with respect to a word metric. 
    \item  Let $\mu$ be a finitely supported, non-elementary probability measure on $G$. Then for $\kappa(t) = \log^p(t)$, where $p$ is the complexity of the HHS. Then almost every sample path $(w_n)$ converges to a point in $\pka G$; and the $\kappa$-Morse boundary $(\pka G, \nu)$ is a model for the Poisson boundary of $(G, \mu)$where $\nu$ is the hitting measure associated to the random walk driven by $\mu$.
\end{enumerate}
\end{introthm}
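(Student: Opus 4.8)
The plan is to establish the two items of Theorem~\ref{introthm:1} essentially independently, since (1) is a structural statement about the metric geometry of $G$ and (2) combines (1) with the general machinery relating sublinearly Morse boundaries of hierarchically hyperbolic spaces (HHS) to Poisson boundaries. For item (1), I would first recall the Bass--Serre tree $T$ associated to the CKA graph of groups, together with the vertex spaces $\widetilde{Y_v}$ (each quasi-isometric to a product $X_v \times \mathbb{R}$ by \eqref{centralExtEQ}) and edge spaces (quasi-flats $\cong \mathbb{R}^2$). The natural HHS structure has index set built from: the tree $T$ itself (the $\sqsubseteq$-maximal domain, with associated space a quasi-tree or the whole of $G$), one domain for each vertex space recording its $\mathbb{R}$-fiber direction, one domain for each vertex space recording the hyperbolic factor $X_v$ (which is itself hyperbolic, hence a clean HHS domain), and the edge-space domains. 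I would verify the HHS axioms of Behrstock--Hagen--Sisto --- nesting, orthogonality (the fiber $\mathbb{Z}$ is orthogonal to the base $H_v$ within a single vertex group, and distinct vertex groups are ``transverse'' via the tree), finite complexity, the distance formula, bounded geodesic image, large links, and the realization theorem --- using the template already worked out for graph manifold groups (where the analogous statement is due to work building on the Kapovich--Leeb and Bridson--Haefliger description); the role of genuine Seifert pieces is here played by the central extensions $G_v$, so I must check that the extension \eqref{centralExtEQ} does not obstruct the product-HHS structure, which follows because $Z(G_v)$ is undistorted and $H_v$ is hyperbolic.

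For item (2), the strategy is: (a) observe that a finitely supported non-elementary $\mu$ on an HHS-group has positive speed and finite entropy, and by Behrstock--Hagen--Sisto--Zalloum (or Qing--Rafi--Tiozzo in the HHS setting) almost every sample path $(w_n)$ converges to a point $\xi \in \pka G$ in the $\kappa$-Morse boundary for $\kappa(t) = \log^{p}(t)$ with $p$ the complexity; this uses the deviation inequalities for the coordinate projections $\pi_{\mathcal{U}}(w_n)$ and a Borel--Cantelli argument showing that sample paths are $\kappa$-contracting with the correct logarithmic gauge. (b) Push the hitting measure $\nu$ forward to get a $\mu$-stationary measure on $(\pka G, \nu)$, so that this is a $\mu$-boundary. (c) Invoke the entropy criterion of Kaimanovich (the ``strip criterion'' or ``ray criterion''): it suffices to construct, for $\nu \times \check\nu$-almost every pair of boundary points $(\xi^+, \xi^-)$, a sublinearly-thin ``strip'' or a bi-infinite geodesic-like set that is $\kappa$-quasi-invariant and has subexponential growth of the number of group elements within distance $n$ of it that lie within $o(n)$ of the sample path. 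In the $\kappa$-Morse boundary framework this amounts to showing that the $\kappa$-Morse geodesic rays representing $\xi^+$ and $\xi^-$ fellow-travel a coarsely well-defined set, which is exactly the content of the ``no-atom'' / ``separation'' properties of $\kappa$-Morse directions established for HHS, together with the geometric fact that CKA spaces have the Morse Local-to-Global property and their hyperbolic-like directions are detected by the distance formula.

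The main obstacle I expect is step (c) for item (2): verifying Kaimanovich's criterion requires controlling how much of the group lies in a sublinear neighborhood of the bi-infinite $\kappa$-Morse ``axis'' joining two independent boundary points, and in the CKA setting the non-hyperbolic directions (the $\mathbb{R}^2$ quasi-flats coming from the edge spaces, and the $X_v \times \mathbb{R}$ products) create genuine ambiguity --- two distinct group elements can be sublinearly close to the same pair of boundary points if they differ by a bounded-distortion flat translation. The resolution is that for a \emph{non-elementary} random walk the sample path almost surely eventually leaves every vertex space and tracks a genuinely ``spread-out'' sequence of domains in $T$, so that the relevant strip has the Morse property and the counting estimate goes through; making this precise means proving a deviation estimate that the sample path spends only a sublinear amount of time in any single vertex/edge space, which in turn follows from the linear-progress-in-the-tree estimate for the projection to the Bass--Serre tree combined with the fact that $\mu$ is non-elementary. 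A secondary technical point is pinning down the exact exponent $p$: I would trace through the HHS complexity of the CKA structure (roughly $2$ for the product pieces plus the tree, so $p = 2$ suffices) to match the gauge $\kappa(t) = \log^{p}(t)$ with the one required by the sublinearly-Morse convergence theorem, ensuring $\nu$ is supported on the $\kappa$-Morse boundary for precisely this $\kappa$.
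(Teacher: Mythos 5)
Your overall skeleton (Bass--Serre tree as the maximal domain, vertex-level domains separating the hyperbolic base from the $\mathbb{R}$-fiber, then random-walk convergence plus a Poisson-boundary criterion) is aligned with the paper, but there are three points where your plan diverges or would run into trouble.

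\textbf{Item (1): you cannot simply ``use the graph-manifold template.''} You propose to verify the HHS axioms ``using the template already worked out for graph manifold groups.'' However, the graph-manifold result in Behrstock--Hagen--Sisto is proved by first invoking Kapovich--Leeb to reduce to \emph{flip} graph manifolds and then applying a combination theorem. For general CKA groups it is not known whether the gluing can be replaced by a flip gluing up to quasi-isometry, and CKA spaces need not be manifolds, so that route is unavailable --- this is explicitly noted in the introduction. The paper therefore builds the HHS structure by hand. Moreover, your index set is not quite the right one: the vertex-base domain is \emph{not} the raw hyperbolic factor $H_v$, but the coned-off space $\widehat{H}_v$ obtained by attaching hyperbolic cones to the boundary lines $\mathbb{L}_v$ (this is exactly the electrified space coming from the fact that $Q_v$ is hyperbolic relative to the peripheral $\bar K_e$'s, Lemma~\ref{numberlines}). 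Using $H_v$ itself would break nesting and bounded geodesic image: you need the apex points $c_e \in \widehat{H}_v$ to serve as the sets $\rho^{\mathcal{R}_w}_{\widehat{H}_v}$. Similarly, the fiber domain is not just a $\mathbb{Z}$-line but the quasi-line $\mathcal{R}_v$ built from the star $St(v)$ together with coned-off boundary planes. You also posit separate ``edge-space domains'' for the $\mathbb{R}^2$ quasi-flats; the paper's $\Lambda = \{T\} \cup \{\mathcal{R}_v\} \cup \{\widehat{H}_v\}$ has no such domains --- the edge information is absorbed into the $\mathcal{R}_v$'s via the boundary planes $F_e$. Adding edge domains would complicate nesting and orthogonality and isn't needed.

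\textbf{Item (2): the paper does not run Kaimanovich's strip criterion directly.} You propose verifying Kaimanovich's strip/ray criterion in the CKA space and worry about counting group elements in sublinear neighborhoods of a bi-infinite axis, having to control time spent in flats. The paper sidesteps all of this by invoking Theorem~\ref{T:poiss-general} (attributed to Gekhtman / \cite{QRT20}), which already packages the entropy argument: it suffices to show that almost every sample path sublinearly tracks a $\kappa$-Morse geodesic ray. The appendix then establishes this tracking by (i) Maher and Maher--Tiozzo's linear progress and convergence in $\calC S$, (ii) logarithmic deviation of all subdomain projections $d_Y(\go, w_n) \lesssim \log n$ (the analogue of \cite[Thm.~A.17]{QRT19}), and (iii) Proposition~\ref{P:contracting-morse}, which shows any realization $\res(\go,\xi)$ of a hierarchy whose subdomain projections are logarithmic is $\kappa$-weakly contracting, hence $\kappa$-Morse, for $\kappa = \log^p$. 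Your route would essentially re-prove the cited blackbox theorem, and the ``strip has subexponential growth'' step is precisely where flats cause trouble --- the paper avoids having to confront that directly. Finally, a small but concrete error: the complexity of the CKA HHS structure is $3$ (a chain $\mathcal{R}_v \sqsubseteq \widehat{H}_w \sqsubseteq T$), not $2$, so the gauge is $\kappa(t) = \log^3(t)$.
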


The key step to show Theorem~\ref{introthm:1} involves proving that \CAT admissible groups acts geometrically on hierarchically hyperbolic spaces. Hierarchically hyperbolic spaces are introduced axiomatically, modeling the Masur–Minsky deconstruction of mapping class groups \cite{MM00}. A hierarchically hyperbolic space (HHS) consists of a metric space $X$ together with a partially ordered set of $\delta$-hyperbolic spaces. There are projections maps from $X$ to each of the hyperbolic spaces and from the hyperbolic spaces to each other.  In some sense, this set of hyperbolic spaces form a tree-like metric that can be used to measure distances in $X$. Many interesting groups are quasi-isometric to hierarchically hyperbolic spaces, examples of classes of groups include but are not limited to:
\begin{itemize}
    \item hyperbolic groups and relative hyperbolic groups 
    \item Mapping class groups of orientable surfaces of finite type.
     \item Fundamental groups of compact three-manifolds with no Nil or Sol in their prime decomposition.
      \item  Groups that act properly and cocompactly on a special \CAT cube complex, and more
generally any cubical group which admits a factor system. 
\end{itemize}

It is established in \cite{BHS19} that all graph manifolds are a hierarchically hyperbolic space. To show this, we have Kapovich-Leeb \cite{KL98}   to show that all graph manifolds are quasi-isometric to flip graph manifolds, and the combination theorem of \cite{BHS19} shows that the fundamental group of a flip graph manifold is HHS. When we lose the manifold structure and instead we look at graph of groups and in the setting of \CAT admissible groups, it is unknown whether all fundamental group of graph of hyperbolic-by-Z groups are quasi-isometric to the ones whose gluing map is a flip isometry(here flip roughly means the same as in the basic example). In Theorem~\ref{introthm:1}, we drop both the assumption of flip and also the assumption of the space being a manifold. 

Lastly, an intermediate step in the proof of Theorem~\ref{introthm:1} depends on the following fact:

\begin{introthm}\label{T:PB-intro}
Let $\mu$ be a non-elementary, finitely supported measure on $G$ and assume that $G$ acts geometrically on an hierarchically hyperbolic space. Then for $\kappa(r) := \log^p(r)$, where $p$ is the complexity of the hierarchy, the $\kappa$-Morse boundary is a topological model for the Poisson boundary of $(G, \mu)$. 
\end{introthm}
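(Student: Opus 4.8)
The plan is to run Kaimanovich's strip strategy for identifying the Poisson boundary, in the form adapted to sublinearly Morse boundaries by Qing--Rafi--Tiozzo. Fix a basepoint $o$ in the HHS $X$ on which $G$ acts geometrically, identify $G$ with its orbit, and write $\calC(S)$ for the $\sqsubseteq$-maximal hyperbolic space in the hierarchy, with $p$ its complexity. There are two things to establish: first, that $(\pka G,\nu)$ is a $\mu$-boundary at all --- i.e.\ that almost every sample path converges to a $\kappa$-Morse direction, producing a stationary hitting measure $\nu$; and second, that this $\mu$-boundary is maximal.

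For the first part, observe that $\mu$ non-elementary forces the induced random walk on $\calC(S)$ to be non-elementary: the $G$-action on $\calC(S)$ is non-elementary because $G$, being an HHS group with $\calC(S)$ unbounded, contains independent loxodromics for this action, and the support of $\mu$ generates $G$. By Maher--Tiozzo, almost every sample path $(w_n)$ tracks a geodesic in $\calC(S)$ and converges to a point of $\partial \calC(S)$; moreover the Azuma-type deviation inequalities show that along a.e.\ path the coordinates $\pi_W(w_n)$ in every proper domain $W\sqsubsetneq S$ grow at a rate at most logarithmic in $n$. Feeding this into the HHS distance formula together with a sublinear-tracking lemma produces, for a.e.\ $\omega$, a geodesic ray $\gamma_\omega$ in $X$ with $d_X(w_n o,\gamma_\omega)=o(n)$; and --- this is the quantitative heart --- propagating the logarithmic deviation through the at most $p$ levels of the hierarchy shows $\gamma_\omega$ is $\kappa$-Morse for $\kappa(r)=\log^p r$. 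Setting $\mathrm{bnd}(\omega)=[\gamma_\omega]$ and pushing forward the path measure yields a $\mu$-stationary measure $\nu$ on $\pka G$, so $(\pka G,\nu)$ is a $\mu$-boundary; this already gives the convergence statement.

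For maximality, apply Kaimanovich's strip criterion. Let $\check\mu(g)=\mu(g^{-1})$ with hitting measure $\check\nu$ on $\pka G$. For $\nu\otimes\check\nu$-a.e.\ pair of distinct directions $(\xi^+,\xi^-)$ one builds a strip $\Sigma(\xi^-,\xi^+)\subset G$ as a bounded neighborhood of the sublinearly-thin bundle spanned by $\kappa$-Morse representative rays of $\xi^+$ and $\xi^-$: two $\kappa$-Morse rays with distinct endpoints remain within sublinear distance of a common bi-infinite $\kappa$-Morse quasigeodesic, and $\Sigma$ collects the group elements within bounded distance of it. One must check (i) that these strips are nonempty and $G$-equivariant in the measurable sense, which holds because $g\cdot\Sigma(\xi^-,\xi^+)$ and $\Sigma(g\xi^-,g\xi^+)$ are uniformly Hausdorff-close, using $\kappa$-Morseness; and (ii) the subexponential bound $\tfrac1n\log\#\big(\Sigma(\xi^-,\xi^+)\cap B(o,Ln)\big)\to 0$, where $L>0$ is the drift. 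For (ii) one uses that a $\kappa$-Morse quasigeodesic is sublinearly contracting, so its intersection with $B(o,R)$ has diameter $O(R)$ and hence --- since $G$ acts geometrically --- carries only $O(R)=e^{o(R)}$ group elements; combined with the positive drift on $\calC(S)$ from Maher--Tiozzo, the strip criterion applies and identifies $(\pka G,\nu)$ with the Poisson boundary of $(G,\mu)$.

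I expect the main obstacle to be the two places where the exponent $p$ and the function $\log^p$ genuinely enter: upgrading the single-level Maher--Tiozzo/Azuma deviation estimate on $\calC(S)$ to simultaneous, sharp $\log^p$ control of a sample path in every domain of the hierarchy, and the parallel assertion that the resulting ray $\gamma_\omega$ is $\kappa$-Morse for this specific $\kappa$ rather than merely sublinearly Morse for some unspecified sublinear function. Both rest on a careful induction over the $\sqsubseteq$-levels, using the ``passing-up'' and ``large links'' axioms of the HHS structure together with the distance formula; once that quantitative control is in place, the $\mu$-boundary construction and the strip-criterion verification are largely formal, following the template already established in the sublinearly Morse boundary literature.
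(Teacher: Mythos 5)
Your identification of the geometric core of the argument matches the paper exactly: the deterministic step is that logarithmic deviations of a ray in every proper domain $Y\sqsubsetneq S$, propagated through the $p$ levels of the hierarchy via the distance formula (the paper's Proposition~\ref{P:hierarchy}), force $\kappa$-Morseness with $\kappa=\log^p$ (the paper's Proposition~\ref{P:contracting-morse}); the probabilistic step is Maher--Tiozzo convergence in $\calC S$ together with $\log n$ deviation bounds in all proper domains (the paper's Theorem~\ref{T:RW} plus the analogue of Theorem~A.17 of \cite{QRT19}). Where you diverge is the final step. You propose to re-run Kaimanovich's strip criterion from scratch: build strips from bi-infinite $\kappa$-Morse quasi-geodesics joining a.e.\ pair of limit directions of the forward and reflected walks, verify equivariance and subexponential growth, and conclude. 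The paper instead invokes Theorem~\ref{T:poiss-general} (attributed to Gekhtman and \cite{QRT20}) as a black box: it asserts directly that if a.e.\ sample path sublinearly tracks a $\kappa$-Morse geodesic ray, then $(\pka X,\nu)$ is the Poisson boundary, with no further random-walk input needed. The paper's route is cleaner because the strip construction you sketch hides genuine work --- that two $\kappa$-Morse rays with distinct classes can be joined by a common $\kappa$-Morse bi-infinite quasi-geodesic within sublinear Hausdorff error is a visibility-type statement that is not automatic in a sublinearly Morse boundary, and the subexponential-growth estimate requires controlling the width of the tube rather than just counting $O(R)$ points on a quasi-geodesic segment; both are already settled once and for all in \cite{QRT20}, which is what Theorem~\ref{T:poiss-general} packages. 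So you have identified all the right ingredients, but you would be re-proving a quoted theorem; the upshot of the paper's presentation is that the only new content needed here is purely geometric (logarithmic deviations $\Rightarrow$ $\kappa$-Morse for $\kappa=\log^p$), with the Poisson boundary identification reduced to one citation.
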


The claim is proven for all mapping class groups and is remarked in \cite{QRT20}. We include the proof for hierarchically hyperbolic spaces in the appendix of this paper. 

\subsection{Acknowledgments}
 Hoang Thanh Nguyen is partially supported by Project ICRTM04\_2021.07 of the International Centre for Research and Postgraduate Training in
Mathematics, VietNam. We thank Mark Hagen and Kasra Rafi for useful conversations.

\section{Preliminaries}
In this section, we review definitions of $\kappa$-projection, $\kappa$-Morse boundary in Section~\ref{pre:2.1}, random walk on groups and Posisson boundaries in Section~\ref{pre:2.2}. These two sections (Section~\ref{pre:2.1} and Section~\ref{pre:2.2}) are only used in Section~\ref{appendix}, so the reader could skip these two sections until Section~\ref{appendix} . In Section~\ref{pre:2.3}, we recall the definition of hierachically hyperbolic space (HHS) given in \cite{BHS19}. We then review backgrounds on CAT(0) admissible groups in Section~\ref{admissible}.
\subsection{$\kappa$-projection, $\kappa$-Morse boundary}
\label{pre:2.1}
We refer to and follow the construction in \cite{QRT20} as a complete reference for $\kappa$-Morse boundaries of proper geodesic metric spaces. Here we briefly recall the construction.
\begin{definition} \label{weakprojection}
Let $(X, d_X)$ be a proper geodesic metric space and $Z \subseteq X$ a closed subset, and let 
$\kappa$ be a concave sublinear function. A map $\pi_{Z} \from X \to \mathcal{P}(Z)$ is a $\kappa$-\emph{projection}  if there exist constants $D_{1}, D_{2}$, depending only on $Z$ and $\kappa$, such that for any points $x \in X$ and $z \in Z$, 
\[
\diam_X(\{z \} \cup \pi_{Z}(x)) \leq D_{1} \cdot d_X(x, z) + D_{2} \cdot \kappa(x).
\]
\end{definition}

A $\kappa$-projection differs from a nearest point projection by a uniform multiplicative error and a sublinear additive error\cite{QRT20}.

\begin{lemma} \cite{QRT20}
Given a closed set $Z$, we have for any $x \in X$
 $$\diam_X(\{x \} \cup  \pi_{Z}(x)) \leq (D_{1} + 1) \cdot d_X(x, Z) + D_{2}\cdot \kappa(x).$$
\end{lemma}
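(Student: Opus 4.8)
The plan is to derive the estimate directly from \defref{weakprojection} by a single triangle-inequality argument, after picking a point of $Z$ that realizes (or nearly realizes) the distance from $x$ to $Z$.

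First I would fix an arbitrary $z \in Z$ and compare the diameter of $W := \{x\} \cup \pi_Z(x)$ with that of $\{z\} \cup \pi_Z(x)$. Any two points $a, b \in \pi_Z(x)$ satisfy $d_X(a,b) \le \diam_X(\{z\}\cup\pi_Z(x))$, and for $a \in \pi_Z(x)$ the triangle inequality gives $d_X(x,a) \le d_X(x,z) + d_X(z,a) \le d_X(x,z) + \diam_X(\{z\}\cup\pi_Z(x))$. Since the diameter of $W$ is the supremum of $d_X$ over all such pairs (the pair $(x,x)$ contributing $0$), this yields
\[
\diam_X(\{x\}\cup\pi_Z(x)) \;\le\; d_X(x,z) + \diam_X(\{z\}\cup\pi_Z(x)).
\]
Applying the defining inequality of a $\kappa$-projection to the last term gives
\[
\diam_X(\{x\}\cup\pi_Z(x)) \;\le\; (D_1+1)\, d_X(x,z) + D_2\, \kappa(x).
\]
Finally, the left-hand side and the term $D_2\,\kappa(x)$ do not depend on $z$, so I would take the infimum over $z \in Z$ of the right-hand side; as $Z$ is closed and $X$ is proper, this infimum is attained and equals $d_X(x,Z)$, which is exactly the claimed bound.

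I do not expect any genuine obstacle: the argument is routine, and the only points deserving a remark are that $\diam_X$ must be read as a supremum over all pairs of points of the set in question (so one has to account for both pairs internal to $\pi_Z(x)$ and pairs involving $x$), and that $\pi_Z(x)$ should be nonempty for the statement to be meaningful. Properness of $X$ is used only to guarantee that $d_X(x,Z)$ is attained; a distance-minimizing sequence $z_n \in Z$ would serve equally well, since $\kappa(x)$ is independent of $n$.
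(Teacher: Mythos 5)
Your argument is correct and is the standard triangle-inequality proof one would expect; the paper itself gives no proof (it simply cites \cite{QRT20}), so there is nothing to compare against beyond noting your reasoning is the natural one. Your closing remark is also accurate: the infimum over $z \in Z$ of the right-hand side already equals $(D_1+1)\,d_X(x,Z) + D_2\,\kappa(x)$ without needing properness or attainment, since only the $d_X(x,z)$ term depends on $z$.
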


\begin{definition} \label{D:k-morse}
Let $Z \subseteq X$ be a closed set, and let $\kappa$ be a concave sublinear function. 
We say that $Z$ is \emph{$\kappa$-Morse} if there exists a proper function 
$m_Z : \mathbb{R}^2 \to \mathbb{R}$ such that for any sublinear function $\kappa'$ 
and for any $r > 0$, there exists $R$ such that for any $(q, Q)$-quasi-geodesic ray $\beta$
with $m_Z(q, Q)$ small compared to $r$, if 
$$d_X(\beta_R, Z) \leq \kappa'(R)
\qquad\text{then}\qquad
\beta|_r \subset \calN_\kappa \big(Z, m_Z(q, Q)\big)$$
The function $m_Z$ will be called a $\emph{Morse gauge}$ of $Z$. 
\end{definition}

\begin{definition}
Let $X$ be a proper geodesic space with a base-point $\go$. Two $\kappa$-Morse quasi-geodesic rays $\alpha$ and $\beta$ are equivalent, written as $\alpha \sim \beta$, if they diverge sublinearly. The set of all equivalence classes of $\kappa$-Morse geodesic quasi-geodesic rays, together with the \emph{fellow traveling quasi-geodesic} topology \cite{QRT20}.
\end{definition}

\begin{theorem}\cite{QRT20}
Let $X$ be a proper geodesic metric space and let $\kappa$ be a sublinear function. The associated $\kappa$-Morse boundary $\pka X$ is a metrizable topological space and $\pka X$ is quasi-isometrically invariant. 
\end{theorem}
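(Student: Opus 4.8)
The plan is to prove the two assertions separately, both organized around the stratification of $\pka X$ by Morse gauges. Fix the basepoint $\go$ and, for a Morse gauge $m$, let $\pka^m X \subseteq \pka X$ be the set of equivalence classes admitting a representative $\kappa$-Morse quasi-geodesic ray with gauge at most $m$, so that $\pka X = \bigcup_m \pka^m X$, an increasing union over a countable cofinal family of gauges. The first key step is that each stratum $\pka^m X$ is compact in the subspace topology. This is an Arz\`ela--Ascoli argument: given a sequence of quasi-geodesic rays with uniformly bounded quasi-geodesic constants and uniform Morse gauge $m$, reparametrize proportional to arc length so they are uniformly coarsely Lipschitz, extract by a diagonal argument a subsequence converging uniformly on compact subsets of $[0,\infty)$, and check (using Definition~\ref{D:k-morse}) that the limit is again a quasi-geodesic ray with gauge controlled by $m$; the fellow-traveling topology restricted to $\pka^m X$ is exactly the topology of this convergence. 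Hausdorffness follows because two inequivalent $\kappa$-Morse rays must eventually leave each other's $\kappa$-neighborhoods at a definite rate, which produces disjoint basic neighborhoods, and regularity of the fellow-traveling topology is checked similarly.

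For metrizability one then argues that $\pka X$, being a countable increasing union of compact, Hausdorff, second countable---hence metrizable---strata, together with the regularity just noted, satisfies the hypotheses of the Urysohn metrization theorem. The one point requiring care is second countability of the whole space: one takes a countable basis on each stratum and verifies that the resulting countable family generates the fellow-traveling topology on $\pka X$ (here the colimit-like structure of the topology must be used carefully). Alternatively one can exhibit an explicit metric of the form $d(\xi,\eta)=\sum_m 2^{-m}\, d_m(\xi,\eta)$, where $d_m$ metrizes $\pka^m X$ after normalization. I expect this bookkeeping---second countability of the colimit, and compatibility of the stratum metrics---to be the fussiest part of the metrizability half.

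For quasi-isometric invariance, let $f\colon X\to Y$ be a $(K,C)$-quasi-isometry with quasi-inverse $\bar f$. The crucial geometric input is that the image under $f$ of a $\kappa$-Morse subset of $X$ is a $\kappa$-Morse subset of $Y$, with Morse gauge depending only on the original gauge and on $(K,C)$; this is immediate from unwinding Definitions~\ref{weakprojection} and~\ref{D:k-morse}, since $f$ distorts distances and the sublinear term only by bounded multiplicative and additive amounts and $\kappa$ is sublinear. Consequently $f$ sends $\kappa$-Morse quasi-geodesic rays to $\kappa$-Morse quasi-geodesic rays with uniformly controlled data, and it sends sublinearly divergent pairs to sublinearly divergent pairs, since a $(K,C)$-quasi-isometry changes a divergence rate only up to a bounded factor, which is absorbed in the sublinear comparison. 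Hence $f$ descends to a well-defined map $f_\ast\colon \pka X\to \pka Y$; applying the same to $\bar f$ and using that $f\bar f$ and $\bar f f$ are bounded distance from the identity, $f_\ast$ and $\bar f_\ast$ are mutually inverse bijections.

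Finally, continuity of $f_\ast$ (and $\bar f_\ast$): a basic neighborhood of $f_\ast[\alpha]$ in $\pka Y$ is described by requiring fellow-traveling with $f\circ\alpha$ up to a chosen radius and error; pulling this condition back through $\bar f$ and using the uniform control of the induced Morse gauges and reparametrizations, one finds a basic neighborhood of $[\alpha]$ in $\pka X$ mapping inside it. Thus $f_\ast$ is a homeomorphism, proving quasi-isometric invariance. The main obstacle throughout is purely the bookkeeping of how Morse gauges and quasi-geodesic constants behave under reparametrization and under $f$; once the stratum-compactness lemma is established, the topology itself presents no surprises.
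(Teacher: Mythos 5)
The paper does not contain its own proof of this theorem---it is stated verbatim with a citation to \cite{QRT20}---so the comparison is with the literature rather than with text in this paper. Your outline captures several of the right geometric ideas: the stratification $\pka X=\bigcup_m\pka^m X$ by Morse gauge, compactness of each stratum by a properness/Arzel\`a--Ascoli argument, and the fact that a quasi-isometry distorts $\kappa$-neighborhoods and Morse gauges only by controlled multiplicative and additive amounts so that $f$ descends to a bijection $\pka X\to\pka Y$. The quasi-isometry-invariance half is essentially correct, modulo the usual bookkeeping you already identify.

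The metrizability half, however, has a genuine gap, and it sits exactly where you flag ``the fussiest part.'' A countable increasing union of compact metrizable (hence second-countable) spaces, equipped with the colimit topology, is \emph{not} in general metrizable: it can fail to be first countable. The standard cautionary example is $\mathbb{R}^\infty=\varinjlim\mathbb{R}^n$. So ``take a countable basis on each stratum and check that these generate the topology'' is not a bookkeeping step but the entire mathematical content, and your sketch gives no reason it should succeed. Moreover, the proposed explicit metric $d(\xi,\eta)=\sum_m 2^{-m}d_m(\xi,\eta)$ does not parse: each $d_m$ is a metric only on the stratum $\pka^m X$, so for $\xi,\eta$ not both in that stratum the summand is undefined, and there is no canonical way to extend $d_m$ to all of $\pka X$ compatibly. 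To repair the argument you would either need to prove that the fellow-traveling topology is strictly coarser than the colimit topology of the stratification and then argue separately, or---as is done in \cite{QRT20}---establish second countability of $\pka X$ directly from separability of $X$ and the explicit description of the neighborhood basis $\mathcal{U}(\alpha,r)$, rather than by assembling stratum bases. A secondary point worth noting: for the Arzel\`a--Ascoli step you need to verify that the fellow-traveling topology \emph{restricted} to $\pka^m X$ agrees with the quotient of uniform-on-compacts convergence of representative rays; this is true but is itself a lemma, not a definition.
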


\subsection{Random walk and Poisson boundaries}
\label{pre:2.2}
We now  review random walk on groups per the study of this project. Let $G$ be a locally compact, second countable group, with left Haar measure $m$, and let $\mu$ be a Borel probability measure on $G$, 
which we assume to $\emph{spread-out}$, i.e. such that there exists $n$ for which $\mu^n$ is not singular w.r.t. $m$. 
Given $\mu$, we consider the \emph{step space} $(G^\mathbb{N}, \mu^\mathbb{N})$, whose elements we denote as $(g_n)$. 
The \emph{random walk driven by }$\mu$ is the $G$-valued stochastic process $(w_n)$, where for each $n$ we define the product
$$w_n := g_1 g_2 \dots g_n.$$
We denote as $(\Omega, \mathbb{P})$ the \emph{path space}, i.e. the space of sequences $(w_n)$, where $\mathbb{P}$ is the measure induced by pushing forward the measure $\mu^\mathbb{N}$ from the step space. Elements of $\Omega$ are called \emph{sample paths} and will be also denoted as $\omega$. Let $T : \Omega \to \Omega$ be the left shift on the path space, and let $(B, \mathcal{A})$ be a measurable space on which $G$ acts by measurable isomorphisms;  a measure $\nu$ on $B$ is $\mu$-\emph{stationary} if $\nu = \int_G g_\star \nu \ d\mu(g)$, and in that case the pair $(B, \nu)$ is called a $(G, \mu)$-\emph{space}. 
Recall that a \emph{$\mu$-boundary} is a measurable $(G, \mu)$-space $(B,\nu)$ such that there exists 
a $T$-invariant, measurable map $\textbf{bnd} : (\Omega, \mathbb{P}) \to (B, \nu)$, called the \emph{boundary map}. 

Moreover, a function $f: G \to \mathbb{R}$ is $\mu$-\emph{harmonic} if $f(g) = \int_G f(gh) \ d\mu(h)$ for any $g \in G$. 
We denote by $H^\infty(G, \mu)$ the space of bounded, $\mu$-harmonic functions. 
One says a $\mu$-boundary is the \emph{Poisson boundary} of $(G, \mu)$ if the map 
\[
\Phi : H^\infty(G, \mu) \to L^\infty(B, \nu)
\]
given by $\Phi(f)(g) := \int_B f \ dg_\star \nu$ is a bijection. 
The Poisson boundary $(B, \nu)$ is the maximal $\mu$-boundary, in the sense that for any other $\mu$-boundary $(B', \nu')$ there exists a $G$-equivariant, 
measurable map $p: (B, \nu) \to (B', \nu')$.  For more details, see \cite{Kai00}. 

%
%

\subsection*{The $\kappa$-Morse boundary realizes the Poisson boundary}
The following result is due to Ilya Gekhtman and \cite{QRT20}.

\begin{theorem} \label{T:poiss-general}
Let $G$ be a finitely generated group, and let $(X, d_X)$ be a Cayley graph of $G$. 
Let $\mu$ be a probability measure on $G$ with finite first moment with respect to $d_X$, such that the semigroup generated 
by the support of $\mu$ is a non-amenable group. 
Let $\kappa$ be a concave sublinear function, and suppose that
for almost every sample path $\omega = (w_n)$, there exists a $\kappa$-Morse geodesic ray $\gamma_\omega$ such that 
\begin{equation} \label{E:sub-track}
\lim_{n \to \infty} \frac{d_X(w_n, \gamma_\omega)}{n} = 0.
\end{equation}
Then almost every sample path converges to a point in $\pka X$, and moreover the space $(\pka X, \nu)$, where $\nu$ is the hitting measure for the random walk, is a model for the Poisson boundary of $(G, \mu)$. 
\end{theorem}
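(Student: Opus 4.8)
The plan is to combine convergence of the sample paths to the sublinearly Morse boundary with Kaimanovich's strip criterion \cite{Kai00}; write $B(\go,r)$ for the $d_X$--ball of radius $r$ about $\go$ and $\check\mu(g):=\mu(g^{-1})$ for the reflected measure. I would first observe that, since $G$ is finitely generated (hence of at most exponential growth), the finite first moment hypothesis $\sum_g\mu(g)\,d_X(\go,g)<\infty$ forces the Shannon entropy $H(\mu)=-\sum_g\mu(g)\log\mu(g)$ to be finite: bound it by the entropy of the induced distribution on metric annuli $A_k$ plus $\sum_k\mu(A_k)\log|B(\go,k{+}1)|$. Non-amenability of the semigroup generated by $\supp\mu$ makes the walk transient, so $d_X(\go,w_n)\to\infty$ almost surely, while $d_X(\go,w_n)=O(n)$ a.s.\ by the triangle inequality and the law of large numbers for the increments $d_X(\go,g_i)$. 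Combined with \eqref{E:sub-track}, this makes $\gamma_\omega$ genuinely infinite and shows $w_n$ converges in $\pka X$ to $\xi(\omega):=[\gamma_\omega]$, independently of the choice of tracking ray (two $\kappa$--Morse rays both sublinearly tracking $(w_n)$ diverge sublinearly, hence represent the same point of $\pka X$ \cite{QRT20}). The resulting map $\textbf{bnd}\colon(\Omega,\mathbb P)\to\pka X$ is measurable and shift--equivariant, so $\nu:=\textbf{bnd}_\star\mathbb P$ is $\mu$--stationary and $(\pka X,\nu)$ is a $\mu$--boundary; the same reasoning for $\check\mu$ yields a $\check\mu$--boundary $(\pka X,\check\nu)$. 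I will also use that $\nu$ and $\check\nu$ are non-atomic, a standard consequence of non-amenability.

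\textbf{Step 2: reduction via the strip criterion.} By Kaimanovich's strip criterion \cite{Kai00} --- applicable since $\mu$ has finite entropy and finite first moment for $d_X$ --- it then suffices to produce a $G$--equivariant measurable assignment $(\xi_-,\xi_+)\mapsto S(\xi_-,\xi_+)\subseteq G$ of non-empty ``strips'', defined for $\check\nu\otimes\nu$--a.e.\ pair, such that for every $L>0$ and $\check\nu\otimes\nu$--a.e.\ $(\xi_-,\xi_+)$,
\[
\frac1n\,\log\#\bigl(S(\xi_-,\xi_+)\cap B(\go,Ln)\bigr)\ \longrightarrow\ 0\qquad\text{as }n\to\infty ;
\]
the criterion would then give that $(\pka X,\nu)$ and $(\pka X,\check\nu)$ are the Poisson boundaries of $(G,\mu)$ and $(G,\check\mu)$, and in particular that $\nu$ is the hitting measure, proving the theorem.

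\textbf{Step 3: building the strips.} For $\check\nu\otimes\nu$--a.e.\ $(\xi_-,\xi_+)$ one has $\xi_-\neq\xi_+$ by non-atomicity, and I would invoke the theory of $\kappa$--Morse quasi-geodesics of \cite{QRT20} --- two $\kappa$--Morse rays with distinct endpoints concatenate to a $\kappa$--Morse quasi-geodesic line, which lies at sublinear Hausdorff distance from an honest bi-infinite geodesic that is again $\kappa$--Morse with a controlled gauge --- to obtain a bi-infinite $\kappa$--Morse geodesic $\gamma_{\xi_-\xi_+}$ joining $\xi_-$ to $\xi_+$. Set $S(\xi_-,\xi_+):=\calN_\kappa(\gamma_{\xi_-\xi_+},C)\cap G$ for a suitable $C$; equivariance and measurability are inherited from $\textbf{bnd}$ and a measurable selection of the connecting geodesics. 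Because $w_n$ sublinearly tracks a ray representing $\xi_+=\textbf{bnd}(\omega)$ by \eqref{E:sub-track}, and $\gamma_{\xi_-\xi_+}$ sublinearly fellow-travels that ray on its forward end (this is where $\kappa$--Morseness is essential), one gets $d_X\bigl(w_n,\,S(\textbf{bnd}(\check\omega),\textbf{bnd}(\omega))\bigr)=o(n)$ almost surely --- the geometric input that drives Kaimanovich's argument.

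\textbf{Step 4: the growth estimate, and the main difficulty.} The set $\gamma_{\xi_-\xi_+}\cap B(\go,Ln)$ is covered by $O(Ln)$ unit balls, and each lattice point of $S(\xi_-,\xi_+)$ near it lies in a $G$--translate of $B(\go,\kappa(Ln{+}C){+}C)$; with $v$ the exponential growth rate of $G$ this yields $\#\bigl(S(\xi_-,\xi_+)\cap B(\go,Ln)\bigr)\le C'Ln\cdot e^{v\kappa(C''n)}$, so $\tfrac1n\log\#(\cdots)\le\tfrac{\log(C'Ln)}{n}+\tfrac{v\,\kappa(C''n)}{n}\to0$ because $\kappa$ is sublinear. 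This would verify the hypothesis of the strip criterion. The part I expect to be the real obstacle is Step 3: proving that $\check\nu\otimes\nu$--a.e.\ pair of $\kappa$--Morse boundary points is joined by a bi-infinite geodesic carrying a \emph{uniform} $\kappa$--Morse gauge, that the strip assignment is genuinely measurable and $G$--equivariant, and that the strips sublinearly fellow-travel the sample paths; by comparison the growth estimate of Step 4 and the convergence statement of Step 1 are soft.
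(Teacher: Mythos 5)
The paper does not actually prove Theorem \ref{T:poiss-general}: it is stated as a citation (``due to Ilya Gekhtman and \cite{QRT20}'') and used as a black box, so there is no in-paper proof to compare against. Your outline reproduces the strip-criterion argument that \cite{QRT20} carries out, and the overall architecture (finite entropy from finite first moment plus exponential growth; convergence to $\pka X$ from sublinear tracking; Kaimanovich's strip criterion; strips as sublinear neighborhoods of a $\kappa$--Morse connecting geodesic; subexponential growth of those neighborhoods because $\kappa(r)/r\to 0$) is the right one.

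The one place your argument is genuinely thin --- and you flag it yourself --- is Step~3. Two things there are not free. First, you need that for $\check\nu\otimes\nu$--a.e.\ pair $(\xi_-,\xi_+)$ there \emph{exists} a bi-infinite geodesic $\gamma_{\xi_-\xi_+}$ whose two ends sublinearly fellow-travel the two defining rays; this is a limiting/Arzel\`a--Ascoli argument whose convergence is controlled precisely by the $\kappa$--Morse gauges of the two rays, but one must rule out the degenerate case where the two rays bound a flat (that is where non-atomicity of $\nu$ and $\check\nu$ and the assumption $\xi_-\neq\xi_+$ do real work, not just formal work). Second --- and this is the harder point --- the strip must be \emph{measurably and equivariantly} selected: there is no canonical connecting geodesic in a general proper geodesic space, so you either need a measurable selection theorem or you must replace the single geodesic by a canonically defined set (e.g.\ the union of \emph{all} geodesics between $\xi_-$ and $\xi_+$, or a ``cloud'' defined by projection inequalities as in the appendix of this paper); the growth estimate in your Step~4 still goes through for such a thickened strip provided the sublinear bound on its width is preserved. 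A further small point: for the strip criterion you also need the strip to be non-empty and for $w_n$ to lie at distance $o(n)$ from it; your sublinear-tracking hypothesis only controls the forward end, so you must also argue that the reflected walk $(\check w_n)$ tracks the backward end of the same strip, which uses the $G$--equivariance of $S$ plus the fact that the two-sided path space is a skew-product over $(\Omega,\mathbb P)\times(\check\Omega,\check{\mathbb P})$. None of these are wrong turns --- they are exactly the pieces that \cite{QRT20} supplies --- but as written your Step~3 asserts the needed facts rather than deriving them, so the proposal is an accurate outline rather than a complete proof.
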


\subsection{Hierarchically hyperbolic spaces}
\label{pre:2.3}
We first recall the definition of a hierarchically hyperbolic space (HHS) as given in \cite{BHS19}.

\begin{definition}[Hierarchically hyperbolic spaces]
\label{defn:HHS}
A $q$--quasigeodesic space $(X,d)$ (i.e, there exists a constant $q$ such that any two points in $X$ is joining by a $(q,q)$--quasigeodesic segment) is a hierarchically hyperbolic space if there exists $\delta \ge 0$, an index set $\Lambda$, a collection of $\delta$--hyperbolic spaces $\{\calC W: W \in \Lambda \}$ such that the following conditions are satisfied:

    {\textbf{ Projections:}} There exists a uniform constant $\xi \ge 0$, there is a set \[\{ \pi_{W} \colon X \to 2^{\calC W},  W \in \Lambda \} \] of projections such that $\diam_{\calC W}(\pi_{W}(x)) \le \xi$ for all $x \in \calC W$. Moreover, there exists $K$ such that for all $W \in \Lambda$, $\pi_{W}$ is coarsely  Lipschitz and the image $\pi_1(W)(X)$ (which is defined to be the union of $\pi_{W}(x)$) is a $K$--quasiconvex set in $\calC W$.
    
    {\textbf { Nesting:}} $\Lambda$ is equipped with a partial order $\sqsubseteq$ such that either $\Lambda = \emptyset$ or $\Lambda$ contains a unique $\sqsubseteq$--maximal element. In this paper the maximal element is always denoted $S$. When $V \sqsubseteq W$ we say $V$ is nested in $W$. For each $W \in \Lambda$, define $\Lambda_{W} = \{V \in \Lambda : V \sqsubseteq W \}$.
     If $V, W \in \Lambda$ such that $V$ is properly nested in $W$ then there is a specific subset $\rho_{W}^{V} \subset \calC W$ such that $\diam (\rho_{W}^{V}) \le \xi$.
    Also, there is a projection $\rho_{V}^{W}$ from $\calC W$ to subsets of $\calC V$.
    
    {\textbf {Orthogonality:}}
    $\Lambda$ has a symmetric and anti-reflexive relation called {\it orthogonality}: (we use notation $ \perp$). Also, whenever $V \sqsubseteq W$ and $W \perp U$ then we require that $V \perp U$.
        We require that for each $S \in \Lambda$, for each $U \in \Lambda_{S}$ such that the set $\{ V \in \Lambda_{S}: V \perp U \} \neq \emptyset$ then there exists $W \in  \Lambda_{S}$, $W \neq S$ so that whenever $V \perp U$, $V \sqsubseteq S$ then we have $V \sqsubseteq W$. Finally, if $V \perp W$, then $V, W$ are not $\sqsubseteq$--comparable.

     {\textbf {Transversality and consistency:}}
If $V$ and $W$ are not orthogonal and neither is nested in the other, then we say that $V$ and $W$ are {\it transverse }, denoted $
V \pitchfork W
$, then there are subsets $\rho^V_W \subset \calC W$ and $P^{W}_{V} \subset \calC V$ such that $\diam(\rho^V_W) \le \xi$, $\diam(P^{W}_{V}) \le \xi$ and satisfying:

   For any $x \in X$ we have
 \begin{equation}
    \tag{$\clubsuit$}
    \label{eqn:1}
        \min \{ d_{W} (\pi_{W}(x)), P^V_W) \, , d_{V}(\pi_{V}(x), P^{W}_{V}) \} \le \kappa_{0}
    \end{equation}

 For each $V, W \in \Lambda$ such that $V \sqsubseteq W$, and for each $x \in X$ we have:
\begin{equation}
\tag{$\spadesuit$}
\label{eqn:2}
    \min \{d_{W}(\pi_{W}(x), P^{V}_{W}) \, , \diam ( \pi_{V}(x) \cup \rho^{W}_{V}(\pi_{W}(x))) \} \le \kappa_{0}
\end{equation}

\begin{equation}
    \tag{$\diamondsuit$}
    \label{eqn:3}
    \text{If} \qquad  U \sqsubseteq V,\, \qquad \text{then} \qquad \,  d_{W}(\mathcal{P}^{U}_{W}, \mathcal{P}^V_W) \le \kappa_{0}
\end{equation}
whenever $W \in \Lambda$ satisfies either $V$ is properly nested in $W$ 

    {\textbf {Finite complexity: }}
  The {\it complexity}  of $X$  is an integer $n \ge 0$ such that any set of pairwise $\sqsubseteq$--compatible elements  has  cardinality at most $n$.

    {\textbf {Large links:} } 
 There exists $\lambda \ge 1$ and $E \ge \xi, \kappa_{0}$ such that the following holds.    Let $W \in \Lambda$ and let $x, x' \in X$. Let $N = \lambda d_{w} (\pi_{W}(x), \pi_{W}(x')) + \lambda$. Then there exists $A_1, A_2, \cdots, A_{\lfloor N \rfloor} \in \Lambda_{W} \backslash \{W\}$ such that for all $A \in \Lambda_{W} \backslash \{W\}$ then either $A = T_i$ for some $i$, or $d_{A} (\pi_{A}(x), \pi_{A}(x')) < E$. Also, $d_{W}(\pi_{W}(x), P^{A_{i}}_{W}) \le N$ for each $i$.

    {\textbf{ Bounded geodesic image: }}
    There exists  $E >0$  such that for all $W \in \Lambda$, all $V \in \Lambda_{W} \backslash \{W\}$, and all geodesic $\gamma$ of $\calC W$, either 
    \begin{equation}
    \tag{$\oplus$}
    \label{eqn:4}
          \diam_{\calC V}(\rho^{W}_{V}(\gamma)) \le E \,\,\text{or}\,\, \gamma \cap \mathcal{N}_{E}(\rho^V_W) \neq \emptyset
    \end{equation}

    {\textbf {Partial Realization:}}
  There exists a constant $\alpha$ with the following property. Let $\{V_j\}$ be a family of pairwise orthogonal elements of $\Lambda$, and let $p_{j} \in \pi_{V_{j}}(X)$. Then there exists an element $x \in X$ such that
\begin{itemize}
    \item $d_{V_{j}}(x, p_j) \le \alpha$ for all $j$
    \item For each $j$, and each $v \in \Lambda$ with $V_{j} \sqsubseteq V$, we have $d_{V}(x, \rho^{V_{j}}_{V}) \le \alpha$ and
    \item If $W$ and $V_j$ are transverse for some $j$ then $d_{W}(x, p^{V_{j}}_{W}) \le \alpha$.
\end{itemize}

     {\textbf {Uniqueness:}}
    For each $k \ge 0$, there exists $\Theta = \Theta(k)$ such that for any $x, y$ in $X$, $d_{X}(x, y) \ge \Theta$ then there exists $V \in \Lambda$ such that $d_{V}(x,y) \ge k$.

\end{definition}

\subsection{\CAT admissible groups}
\label{admissible}
\emph{Admissible groups} firstly introduced in \cite{CK02} are a particular class of graph of groups that includes fundamental groups of $3$--dimensional graph manifolds. In this section, we review  admissible groups and their properties that will used throughout in this paper.

We often consider oriented edges from $e_-$ to $e_+$ and denote by $e=[e_-,e_+]$. 
Then $\bar e=[e_+,e_-]$ denotes the oriented edge with reversed orientation. Denote by $\mathcal G^0$ the set of vertices and by $\mathcal G^1$ the set of all oriented edges. 

\begin{definition}
\label{defn:admissible}
A graph of group $\mathcal{G}$ is \emph{admissible} if
\begin{enumerate}
    \item $\mathcal{G}$ is a finite graph with at least one edge.
    \item Each vertex group ${ G}_v$ has center $Z({ G}_v) \cong \mathbb Z$, ${ Q}_v \colon = { G}_{v} / Z({ G}_v)$ is a non-elementary hyperbolic group, and every edge subgroup ${ G}_{e}$ is isomorphic to $\mathbb Z^2$.
    \item Let $e_1$ and $e_2$ be distinct directed edges entering a vertex $v$, and for $i = 1,2$, let $K_i \subset { G}_v$ be the image of the edge homomorphism ${G}_{e_i} \to {G}_v$. Then for every $g \in { G}_v$, $gK_{1}g^{-1}$ is not commensurable with $K_2$, and for every $g \in  G_v \setminus K_i$, $gK_ig^{-1}$ is not commensurable with $K_i$. 
    \item For every edge group ${ G}_e$, if $\alpha_i \colon { G}_{e} \to { G}_{v_i}$ are the edge monomorphism, then the subgroup generated by $\alpha_{1}^{-1}(Z({ G}_{v_1}))$ and $\alpha_{2}^{-1}(Z({ G}_{v_1}))$ has finite index in ${ G}_e$.
\end{enumerate}
A group $G$ is \emph{admissible} if it is the fundamental group of an admissible graph of groups.

\end{definition}

\begin{definition}
\label{defn:admissibleaction}
We say that an admissible group $G$ is \CAT if it acts properly discontinuously, cocompactly and by isometries on a complete proper \CAT space. For the remainder of this paper, the pair $(G, X)$ will always denote a \CAT admissible group with the associated \CAT space. For examples of admissible groups that are not \CAT, see \cite{KL96}.
\end{definition}

Let $G$ be a \CAT admissible group, and let $G \curvearrowright T$ be the action of $G$ on the associated Bass-Serre tree $T$ of the graph of group $\mathcal{G}$ (we refer the reader to Section~2.5 in \cite{CK02} for a brief discussion). Let $V(T)$ and $E(T)$ be the vertex and edge sets of $T$.  For each $\sigma \in V(T) \cup E(T)$, we let $G_{\sigma} \le G$ be the stabilizer of $\sigma$. 

In the rest of this section, we review facts from Section~3.2 in \cite{CK02} that will be used thoroughly this paper. We refer the reader to \cite{CK02} for further explanation. Given $(G,X)$ pair, 
\begin{enumerate}
    \item for every vertex $v\in V(T),$ the set $B_{v} :=  \cap_{g \in Z(G_v)} \minset(g)$ splits as metric product $H_v \times \mathbb R$ where $Z(G_v)$ acts by translation on the $\mathbb R$--factor and  the quotient $G_v/Z(G_v)$ acts geometrically on the Hadamard space $H_v$. 
    
    \item for every edge $e \in E(T)$, the minimal set $B_{e} :=  \cap_{g \in G_e} \minset(g)$ splits as $\overline {B_e} \times \mathbb R^2\subset B_v$ where $\overline {B_e}$ is a compact \CAT space and $G_e=\mathbb Z^2$ acts co-compactly on the Euclidean plane $\mathbb R^2$.
\end{enumerate}
We note that the assignments $v \to B_v$ and $e \to B_e$ are $G$--equivariant with respect to the natural $G$ actions in the sense that $gB_{v}  = B_{gv}$ and $gB_{e} = B_{ge}$ for any $g \in G$.

{\textbf{Strips in X}:} We reviews strips in $X$ that was discussed in the second paragraph in Section~4.1 in \cite{CK02}). We first choose, in a $G$--equivariant, a plane $F_e \subset B_{e}$ for each edge $e$ in $T$ (i.e, $gF_{e} = F_{ge}$ for each $g \in G$). We will call $F_{e}$ is a {\textit {boundary plane}} of $B_{e_{\pm}}$.  
For every pair of adjacent edges $e_1$, $e_2$. we choose, again equivariantly, a shortest geodesic $\alpha_{e_1 e_2}$ from $F_{e_1}$ to $F_{e_2}$ (equivariantly here we mean $\alpha_{(ge)(ge')} = g \alpha_{ee'}$ for adjacent edges $e$, $e'$ ). By the convexity of $B_v = H_v \times \mathbb R$, $v:= e_{1} \cap e_{2}$, this geodesic $\alpha_{e_1e_2}$ determines a Euclidean strip $\mathcal{S}_{e_1e_2} : = \gamma_{e_1e_2} \times \mathbb R$ (possibly of width zero) for some geodesic segment $\gamma_{e_1e_2} \subset H_v$.

\begin{remark}
\label{rem:indexfunction}
There exists a $G$--equivariant coarse $L$--Lipschitz map  $\pi \colon X \to V(T)$  such that  $x \in X_{\rho(x)}$ for all $x \in X$. The map $\pi$ is called a {\textit{ index map}}. We refer the reader to Section~3.3 in \cite{CK02} for  the existence of such a map $\rho$.
 \end{remark}

 \begin{lemma} \cite[Lemma 2.9]{CK02}
 \label{lemma:epsilon}
 There exists a constant $\epsilon > 0$ such that the following hold. Let $F_e$ and $F_e'$ be two boundary planes in a vertex space $B_v$. Let $\mathcal{S}_{e,e'} = \gamma_{e,e'} 
 \times \mathbb{R}$ be the strip in $B_v$. Let $\ell: = F_{e} \cap H_v$ and $\ell' = F_{e'} \cap H_v$. Then the $\ell \cup \gamma_{e,e'} \cup \ell'$ is a $\epsilon$--quasiconvex in the \CAT hyperbolic space $H_v$.
 \end{lemma}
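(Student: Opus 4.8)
\textbf{Proof proposal for Lemma~\ref{lemma:epsilon}.}

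The plan is to reduce the $\epsilon$-quasiconvexity of $\ell \cup \gamma_{e,e'} \cup \ell'$ to two facts about the geometry of the \CAT space $B_v = H_v \times \mathbb{R}$: first, that each of $\ell$, $\gamma_{e,e'}$, $\ell'$ is individually quasiconvex (in fact $\ell,\ell'$ are geodesics, being intersections of flats with $H_v$, and $\gamma_{e,e'}$ is a geodesic segment by construction), and second, that the concatenation is a $(K,C)$-quasigeodesic in $H_v$ for uniform constants, so one can then invoke Morse stability in the $\delta$-hyperbolic space $H_v$. Concretely, I would first record that since $\mathcal{S}_{e,e'} = \gamma_{e,e'} \times \mathbb{R}$ is a shortest strip between the boundary planes $F_e = \ell \times \mathbb{R}$ and $F_{e'} = \ell' \times \mathbb{R}$, the geodesic $\alpha_{e,e'}$ realizing $d(F_e, F_{e'})$ is orthogonal (in the product sense) to both flats, so in the $H_v$-coordinate the segment $\gamma_{e,e'}$ meets $\ell$ and $\ell'$ at well-defined endpoints; this is where I use the product structure from item (1) of the Croke–Kleiner facts and the equivariant choice of strips.

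The main step is to show that the broken path $\ell \cup \gamma_{e,e'} \cup \ell'$ is a uniform quasigeodesic in $H_v$. The key input here is the third admissibility axiom (Definition~\ref{defn:admissible}(3)): the edge groups $K_e, K_{e'}$ corresponding to distinct edges at $v$ are non-commensurable, so the axes $\ell, \ell'$ of the corresponding loxodromic elements in the hyperbolic group $Q_v = G_v/Z(G_v)$ are non-asymptotic; more precisely, their coarse intersection is bounded and they diverge. Combined with the fact that $\gamma_{e,e'}$ realizes (up to the width collapse) the shortest distance between $\ell$ and $\ell'$, a standard ping-pong / thin-triangles argument in $H_v$ shows the concatenation does not backtrack more than a bounded amount at each of the two junction points, hence is a $(1,C)$-quasigeodesic with $C$ depending only on $\delta$ and the commensurability constants — which are uniform over the finitely many $G$-orbits of vertices and edge pairs by cocompactness. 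Once this is established, the Morse lemma in the $\delta$-hyperbolic space $H_v$ gives an $\epsilon = \epsilon(\delta, C)$ with $\ell \cup \gamma_{e,e'} \cup \ell'$ contained in the $\epsilon$-neighborhood of any geodesic with the same endpoints, i.e. $\epsilon$-quasiconvexity; uniformity of $\epsilon$ across all vertices again follows from the finiteness of orbits.

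The hard part will be the no-backtracking estimate at the junction points, i.e. controlling the Gromov product of a point far out on $\ell$ and a point far out on $\gamma_{e,e'}$ based at their common endpoint. This is exactly where the non-commensurability hypothesis (3) must be leveraged quantitatively: one needs that a shortest geodesic from $\ell$ to $\ell'$ cannot run parallel to either axis for long, which would fail if $K_e$ and $K_{e'}$ were commensurable (their axes would then fellow-travel). I would handle this by contradiction — if the backtracking were unbounded along a sequence of configurations, after passing to the finitely many orbit representatives one would produce two genuinely commensurable edge subgroups at a vertex, contradicting (3). Everything else (the product geometry, the Morse lemma, the uniformity) is routine given the structure theory recalled from \cite{CK02}.
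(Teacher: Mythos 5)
The paper gives no proof of this lemma; it is imported verbatim from \cite[Lemma~2.9]{CK02}, so there is no in-paper argument to compare against. Evaluating your proposal on its own terms: the argument is both conceptually off and far more elaborate than the statement requires.

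The set $\ell \cup \gamma_{e,e'} \cup \ell'$ is not a path: $\ell$ and $\ell'$ are bi-infinite geodesic lines and $\gamma_{e,e'}$ is a segment joining them, so the union is a three-piece figure (two lines plus a rung), not a concatenation of rays. The plan to ``show the concatenation is a $(K,C)$-quasigeodesic and invoke Morse stability'' therefore does not directly address quasiconvexity of the union, which is a statement about geodesics between arbitrary pairs of its points. In fact quasiconvexity here follows immediately from $\delta$-hyperbolicity of $H_v$ via thin quadrilaterals and needs none of the structure you invoke. Given $a\in\ell$ and $b\in\ell'$, let $p$ and $q$ be the endpoints of $\gamma_{e,e'}$ on $\ell$ and $\ell'$. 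Since $H_v$ is CAT(0) and hence uniquely geodesic, $[a,p]\subset\ell$, $[p,q]=\gamma_{e,e'}$ and $[q,b]\subset\ell'$; the $2\delta$-thinness of the geodesic quadrilateral $a,p,q,b$ then gives $[a,b]\subset\calN_{2\delta}\bigl([a,p]\cup[p,q]\cup[q,b]\bigr)\subset \calN_{2\delta}\bigl(\ell\cup\gamma_{e,e'}\cup\ell'\bigr)$. The cases where one or both of $a,b$ lie on $\gamma_{e,e'}$ or on the same line are handled by a single thin triangle or are trivial. So $\epsilon=2\delta$ works, uniformly over $v$ by cocompactness.

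Consequently the non-commensurability axiom (Definition~\ref{defn:admissible}(3)), which you flag as the crux of your argument, plays no role in this lemma: the conclusion holds for any two geodesic lines and any geodesic segment joining them in a hyperbolic space, independent of how the lines sit relative to one another. Axiom~(3) is genuinely used elsewhere in the paper --- to obtain almost-malnormality and hence the bounded coarse intersection property of the family $\mathbb{L}_v$ (Lemma~\ref{numberlines}, and the coning-off of $\widehat{H}_v$ via DGO) --- but that is a different statement from this one. Your observation that bounded coarse intersection of $\ell,\ell'$ would make the broken path $[a,p]\cup\gamma_{e,e'}\cup[q,b]$ a uniform quasigeodesic for each fixed pair $a,b$ is reasonable, but it is a strictly stronger conclusion than quasiconvexity of the union and is not what the lemma asks for.
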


\begin{definition}[Boundary lines]
\label{defn:boundaryline}
 Let $\{F_e \}$ be the collection of boundary planes of the block $B_v$. We note that the intersection of a boundary plane $F_e$ of $B_v$ with the hyperbolic space $H_v$ is a line. We define the collection of lines $\mathbb{L}_v$, which we call {\textit{boundary lines}}, in the hyperbolic space $H_v$ as the following:
$$
\mathbb{L}_{v} = \{ \ell_e = F_{e} \cap H_v \, | \, e_{-} = v \}
$$
\end{definition}

\begin{remark}
\label{rem:rdistance}
There exists a constant $r >0$ such that for any vertex $v \in T$, for any $x \in H_v$, there exists  a boundary line $\ell \in \mathbb{L}_v$ such that $d(x, \ell) \le r$.
\end{remark}

The following lemmas are needed in Section~\ref{main} to check the HHS Axioms.

\begin{lemma}
\label{numberlines}
 $Q_v$ is hyperbolic relative to $\mathbb P:=\{\bar K_e: e_-=v, e\in \mathcal G^1\}$ where $\bar{K}_e$ is the projection of $G_e$ under the quotient $G_v \to Q_v$.
\end{lemma}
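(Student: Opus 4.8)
The plan is to identify $\mathbb{P}$ as a finite, almost malnormal family of quasiconvex subgroups of the hyperbolic group $Q_v$ and then apply the standard combination result that a hyperbolic group is hyperbolic relative to any such family. Throughout I would work with the quotient homomorphism $q \colon G_v \to Q_v = G_v/Z(G_v)$ and, for an oriented edge $e$ with $e_- = v$, with the image $K_e \le G_v$ of the (injective) edge homomorphism $G_e \to G_v$, so $K_e \cong \mathbb Z^2$, and with $\bar K_e = q(K_e)$.

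First I would record the algebra. Set $\hat K_e := q^{-1}(\bar K_e) = K_e \cdot Z(G_v)$; since $Z(G_v)$ is central and $K_e$ is abelian, $\hat K_e$ is abelian, it contains $K_e$ with finite index (both are finitely generated abelian of rank $2$, because $K_e \not\subseteq Z(G_v)$ — otherwise $gK_eg^{-1} = K_e$ would be commensurable with $K_e$ for every $g$, contradicting Definition~\ref{defn:admissible}(3) as $Q_v$ is non-elementary), and $\hat K_e / Z(G_v) = \bar K_e$. In particular $\bar K_e$ is an infinite finitely generated abelian subgroup of the hyperbolic group $Q_v$, hence two-ended, hence quasiconvex. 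Since $\calG$ is a finite graph, $\mathbb P$ is a finite collection.

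Next I would verify almost malnormality: given oriented edges $e, e'$ with $e_- = e'_- = v$ and $\bar g \in Q_v$ with $\bar g \bar K_e \bar g^{-1} \cap \bar K_{e'}$ infinite, lift $\bar g$ to $g \in G_v$. Pulling back along $q$, the preimage of $\bar g \bar K_e \bar g^{-1} \cap \bar K_{e'}$ is $g\hat K_e g^{-1} \cap \hat K_{e'}$, which is therefore an abelian group containing $Z(G_v)$ and surjecting onto an infinite group, hence of rank $2$. Because $gK_e g^{-1} \le g\hat K_e g^{-1}$ and $K_{e'} \le \hat K_{e'}$ have finite index, the subgroup $gK_e g^{-1} \cap K_{e'}$ also has rank $2$, i.e.\ $gK_e g^{-1}$ and $K_{e'}$ are commensurable. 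Applying Definition~\ref{defn:admissible}(3) to the directed edges $\bar e, \bar{e'}$ pointing into $v$ (whose images in $G_v$ are $K_e, K_{e'}$): commensurability forces first $e = e'$ and then $g \in K_e$, so $\bar g \in \bar K_e$. This is exactly almost malnormality of $\mathbb P$ in $Q_v$.

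Finally, since $Q_v$ is hyperbolic (Definition~\ref{defn:admissible}(2)) and $\mathbb P$ is a finite, almost malnormal family of quasiconvex subgroups, $Q_v$ is hyperbolic relative to $\mathbb P$ by the standard combination theorem of Bowditch. The only step requiring genuine care is the rank/index bookkeeping of the third paragraph — passing between $\bar g \bar K_e \bar g^{-1} \cap \bar K_{e'}$, the full preimage $g\hat K_e g^{-1}\cap\hat K_{e'}$, and the finite-index subgroup $gK_e g^{-1}\cap K_{e'}$, and checking the orientation conventions so that Definition~\ref{defn:admissible}(3) applies verbatim; everything else is either a formal consequence of hyperbolicity of $Q_v$ (quasiconvexity of two-ended subgroups) or a citation.
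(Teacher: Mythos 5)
Your proposal follows the same overall route as the paper: exhibit $\mathbb P$ as a finite, almost malnormal family of quasiconvex (two-ended) subgroups of the hyperbolic group $Q_v$, then invoke Bowditch's theorem. In fact your handling of the almost malnormality step is \emph{more} careful than the paper's: by pulling everything back to $G_v$ through the full preimages $\hat K_e := q^{-1}(\bar K_e)$ you avoid the imprecision in the paper's claim that $K_e \cap gK_{e'}g^{-1}$ ``is sent to'' $\bar K_e \cap h\bar K_{e'}h^{-1}$ (one only has containment, not equality, so rank information does not obviously push forward), and you do not need the paper's unstated assumption that $Z(G_v) \subseteq K_e$.

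There is, however, a genuine gap where you assert that $K_e$ has finite index in $\hat K_e = K_e \cdot Z(G_v)$ ``because $K_e \not\subseteq Z(G_v)$'', citing Definition~\ref{defn:admissible}(3). That implication is false: $K_e \not\subseteq Z(G_v)$ is automatic (a rank-$2$ group cannot sit inside $Z(G_v)\cong\mathbb Z$), but it does not rule out $K_e \cap Z(G_v) = \{1\}$, in which case $\hat K_e \cong \mathbb Z^3$ and $[\hat K_e : K_e]$ is infinite, so your finite-index/rank bookkeeping in the next paragraph would break. What you actually need is $K_e \cap Z(G_v) \neq \{1\}$, and the correct source is Definition~\ref{defn:admissible}(4): if $\alpha_1^{-1}(Z(G_v))$ were trivial, the subgroup it generates together with $\alpha_2^{-1}(Z(G_{v_2}))$ would have rank at most $1$ in $G_e \cong \mathbb Z^2$ and so could not have finite index. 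With that citation in place, $\hat K_e/K_e \cong Z(G_v)/(K_e \cap Z(G_v))$ is finite and the remainder of your argument is sound; the appeal to Definition~\ref{defn:admissible}(3) at this particular step is not doing the work you want it to do.
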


\begin{proof}
Let $K_e$ be the image  of an edge  group $G_e$ into $G_v$ and $\bar K_e$ be its projection in $Q_v$ under $G_v\to Q_v=G_v/Z(G_v)$. Then $\mathbb P:=\{\bar K_e: e_-=v, e\in \mathcal G^1\}$ is an almost malnormal collection of virtually cyclic subgroups in $H_v$. Indeed, since $Z(G_v)\subset K_e \cong \mathbb Z^2$, we have $\bar K_e=K_e/Z(G_v)$ is virtually cyclic. The almost malnormality follows from non-commensurability of $K_e$ in $G_v$. To see this,  assume that $\bar K_e\cap h\bar K_{e'}h^{-1}$ contains an infinite order element by the hyperbolicity of $Q_v$. If  $g\in G_v$ is sent to $h$, then $K_e\cap gK_{e'}g^{-1}$ is sent to $\bar K_e\cap h\bar K_{e'}h^{-1}$.    Thus, $K_e\cap gK_{e'}g^{-1}$ contains an abelian group of rank 2. The non-commensurability of $K_e$ in $G_v$ implies that  $e=e'$ and $g\in K_e$. This shows that $\mathbb P$ is almost malnormal.
It is well-known that a hyperbolic group is hyperbolic relative to any almost malnormal collection of quasi-convex subgroups (\cite{Bow12}). Thus  $Q_v$ is hyperbolic relative to $\mathbb P$. 
\end{proof}

\section{\CAT admissible groups are HHS}
\label{main}

In this section, we are going to prove the following proposition (that is also part (1) in Theorem~\ref{introthm:1}).

\begin{proposition}
\label{prop:part1}
Let $G$ be a \CAT admissible group. Then $G$ is a hierarchically hyperbolic space with respect to a word metric.
\end{proposition}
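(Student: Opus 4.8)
The plan is to construct an explicit HHS structure on $G$ (equipped with a word metric, hence quasi-isometric to the \CAT space $X$) and verify the axioms of Definition~\ref{defn:HHS} one at a time. The index set $\Lambda$ will be assembled from three types of pieces, mirroring the Bass–Serre decomposition: (i) the maximal element $S$, whose hyperbolic space $\calC S$ is the Bass–Serre tree $T$ (it is a tree, hence $0$-hyperbolic); (ii) for each vertex $v \in V(T)$, a "vertex domain" whose associated hyperbolic space $\calC v$ is the coned-off Cayley graph of $Q_v = G_v/Z(G_v)$ relative to the peripheral collection $\mathbb{P} = \{\bar K_e : e_- = v\}$, which is hyperbolic by \lemref{numberlines} and the relatively hyperbolic machinery; and (iii) for each edge $e \in E(T)$ and each boundary line $\ell_e \in \mathbb{L}_v$, a "line domain" whose hyperbolic space is a bounded (or quasi-line) factor recording the $\mathbb{R}$-fiber direction and the $\mathbb{Z}$-direction inside the $\mathbb{Z}^2$ edge group. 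The nesting relation $\sqsubseteq$ makes every $v$ and every line domain nested in $S$; a line domain at $e$ is nested in both endpoint vertex domains $e_\pm$; and orthogonality $\perp$ is declared between the $\mathbb{Z}$-fiber factor of a vertex and the peripheral/line factors sitting "below" it, reflecting the product structure $B_v = H_v \times \mathbb{R}$. The projection $\pi_S : G \to \calC S = T$ is (a bounded-distance modification of) the index map $\pi$ of \remref{rem:indexfunction}; the projection $\pi_v$ is the composition of closest-point projection onto $B_v$, projection to the $H_v$-factor, and the quotient-then-cone-off map $H_v \to \calC v$; the $\rho$-maps between domains come from closest-point projections of boundary lines/strips, whose quasiconvexity is guaranteed by \lemref{lemma:epsilon} and \remref{rem:rdistance}.

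The order of verification I would follow is: first pin down $\Lambda$, the three families of hyperbolic spaces, and all the projection and $\rho$-maps; then check \textbf{Projections} (coarse Lipschitzness of $\pi_S$ is \remref{rem:indexfunction}; coarse Lipschitzness and quasiconvex image of $\pi_v$ follow from convexity of $B_v$ in $X$ and the fact that coning off preserves quasiconvexity); then \textbf{Nesting} and \textbf{Orthogonality}, which are essentially combinatorial once the poset is defined, using \defref{defn:admissible}(3) (non-commensurability) to guarantee that distinct line domains below a common vertex are transverse rather than nested; then \textbf{Finite complexity}, where a maximal chain of pairwise-comparable elements has length at most $3$ ($S \sqsupseteq v \sqsupseteq \ell_e$) so the complexity $p$ is a small explicit integer (this is the $p$ appearing in \thmref{T:PB-intro} and \thmref{introthm:1}). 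The consistency inequalities $(\clubsuit)$, $(\spadesuit)$, $(\diamondsuit)$ are the technical heart: the transverse case $(\clubsuit)$ reduces, via the index map, to the statement that two boundary planes $F_e, F_{e'}$ that are "far apart" in $T$ have closest-point projections controlled in the respective $H_v$'s — this is where \lemref{lemma:epsilon} does the work; the nested case $(\spadesuit)$ unwinds the product $H_v \times \mathbb{R}$ and uses that the $\mathbb{R}$-coordinate is a genuine metric-product factor, so projections to a line domain and to the ambient vertex domain commute up to bounded error. \textbf{Bounded geodesic image} for $\rho^{v}_{\ell_e}$ is exactly the bounded geodesic image property for the quasiconvex subset $\ell_e$ inside the hyperbolic space $H_v$ (equivalently, the peripheral coset $\bar K_e$ inside $\calC v$), a standard consequence of hyperbolicity. \textbf{Large links} follows from the tree structure of $T$ together with the local large-links statement inside each $\calC v$; \textbf{Partial realization} uses the splitting $B_v = H_v \times \mathbb{R}$ and the $G$-equivariant choice of strips to realize prescribed coordinates; and \textbf{Uniqueness} follows from the distance formula: a large distance in $X$ forces either a large distance in $T$ or, within some vertex block, a large distance in $H_v$ (hence in $\calC v$) or along some $\mathbb{R}$-fiber.

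The main obstacle I expect is the consistency axiom, specifically getting the bounded geodesic image and transversality-consistency inequalities to hold \emph{uniformly} across all vertex groups $G_v$ simultaneously. The individual hyperbolic groups $Q_v$ come with their own hyperbolicity constants, quasiconvexity constants for the peripherals, and BGI constants; because the graph of groups $\mathcal{G}$ is finite and the $G$-action is cocompact, there are only finitely many $G$-orbits of vertices, edges, boundary planes, and strips, so all these constants can be taken uniform — but making that reduction rigorous requires carefully tracking that every constant in sight ($\epsilon$ of \lemref{lemma:epsilon}, $r$ of \remref{rem:rdistance}, $L$ of the index map, the coning-off distortion) depends only on the finite data and not on the particular lift. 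A secondary subtlety is that $\calC v$ must be the \emph{coned-off} space rather than $H_v$ itself: $H_v$ need not be hyperbolic relative to nothing, so we must cone off the boundary lines to get a genuinely $\delta$-hyperbolic $\calC v$, and then check that the projection $\pi_v$ composed with coning is still coarsely Lipschitz with quasiconvex image, and that the line domains correctly "absorb" the cone points. I would handle this by invoking the standard fact (Bowditch, Osin) that coning off a malnormal quasiconvex peripheral family in a hyperbolic group yields a hyperbolic space in which the peripheral cosets are uniformly quasiconvex with the BGI property, which is precisely \lemref{numberlines} combined with relative hyperbolicity.
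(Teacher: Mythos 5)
Your overall strategy matches the paper's: take the index set to be the Bass--Serre tree $T$ as the $\sqsubseteq$-maximal element, coned-off copies of the hyperbolic bases $H_v$ as intermediate domains, and quasi-lines recording the $\mathbb{R}$-fiber direction as minimal domains, then verify the nine axioms. The intermediate family is essentially the paper's $\widehat{H}_v$, and the invocations of \lemref{numberlines}, \lemref{lemma:epsilon}, and the DGO/Bowditch cone-off machinery for uniform hyperbolicity, quasiconvexity and bounded geodesic image are all in the right place, as is the observation that cocompactness with finitely many orbits gives uniform constants.

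However, the specification of the minimal domains is where your proposal goes wrong, and the error is not cosmetic. You put one line domain per (directed) edge $e$ (equivalently per boundary line $\ell_e \in \mathbb{L}_{e_-}$), declare it \emph{nested in both} endpoint vertex domains $e_\pm$, and say it should record ``the $\mathbb{R}$-fiber direction and the $\mathbb{Z}$-direction inside the $\mathbb{Z}^2$ edge group.'' Three problems. First, by the admissibility conditions the boundary line $\ell_e \subset H_{e_-}\cap F_e$ is (coarsely) the $\mathbb{R}$-fiber of $B_{e_+}$, so it should be \emph{nested} in the base domain at $e_-$ but \emph{orthogonal} to the base domain at $e_+$, reflecting the product $B_{e_+}=H_{e_+}\times\mathbb{R}$; nesting in both endpoints would violate the requirement that orthogonal elements not be $\sqsubseteq$-comparable and would make the Partial Realization and Consistency axioms fail. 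Second, a domain recording both $\mathbb{Z}$-directions of the $\mathbb{Z}^2$ edge group would be two-dimensional, not a quasi-line, and would never be $\delta$-hyperbolic; the paper's domain $\mathcal{R}_v$ records only the single $\mathbb{R}$-fiber of $B_v$, glued across all adjacent boundary planes and coned off along the orthogonal fiber lines, which is why it is a quasi-line. Third, with per-edge indexing the fiber direction of $B_v$ is encoded redundantly once for every boundary plane of $B_v$, and you would have to either identify these domains (which is exactly what the paper's per-vertex $\mathcal{R}_v$ does) or declare relations among them, which your proposal does not do; indeed your orthogonality declaration references a ``$\mathbb{Z}$-fiber factor of a vertex'' that does not appear in your stated index set $\{S\}\cup\{\calC v\}\cup\{\text{line domains}\}$ at all. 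Finally, a small but real misconception: $H_v$ is already Gromov hyperbolic (since $Q_v$ is hyperbolic and acts geometrically on the $\CAT$ space $H_v$), so coning off is not needed to \emph{create} hyperbolicity but to kill the boundary-line geometry at the $\calC v$ level and push that information down to the nested fiber domains, which is what makes the bounded geodesic image and consistency axioms check out.
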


First at all, we will construct an index set $\Lambda$ and a set $\{\calC W: W \in \Lambda$ \} of $\delta$--hyperbolic spaces that satisfies HHS's axioms (see Definition~\ref{defn:HHS}).
We note that if metric spaces $\widetilde{X}$ and $\widetilde{Y}$ are quasi-isometric, then $\widetilde X$ is a hierarchically hyperbolic space whenever $\widetilde Y$ is a hierarchically hyperbolic space (see Proposition~1.10 in \cite{BHS19}). Hence to see that $G$ is a hierarchically hyperbolic space with respect to word metrics, we only need to work on its appropriate finite index subgroups.

Define $V(T):=\mathcal V_1\cup\mathcal V_2$ where $\mathcal V_i$ consists of vertices in $T$ with pairwise even distances.  
Passing to a finite index subgroup if necessary, we could assume that $G$ preserves each $\mathcal{V}_i$ (see Lemma~4.6 in \cite{NY20}). We emphasize here that each vertex group $G_v$ in a \CAT admissible groups maybe not split trivially as a product of a hyperbolic space with $\mathbb{Z}$. We need to pass to a further finite index subgroup to make our group is in a simpler form. By Lemma~7.2 in \cite{HNY21}, there exists a finite index subgroup of the \CAT admissible group $G$ such that this subgroup of $G$ preserves $\mathcal{V}_1$, $\mathcal{V}_2$, and each vertex group of the subgroup is a product of a nonelementary hyperbolic group with $\mathbb Z$.  By abuse of notations, we still denote this subgroup by $G$ (since HHS is preserved under quasi-isometry \cite{BHS19}).

\subsection{Constructing an index set}
\subsubsection{Elements in index set $\Lambda$: A collection of coned-off hyperbolic spaces}

Recall from Section~\ref{admissible} that for each vertex $v$ in the Bass-Serre tree $T$, there is associated block $B_{v} = H_v \times \mathbb{R}$. 
For a line $\ell$ and a constant $r>0$, a {\textit{hyperbolic $r$--cone}}, denoted by $cone_r(\ell)$   is the quotient space of $\ell\times [0, r]$ by collapsing $\ell\times 0$ as a point called \textit{apex}. A metric   is endowed  on $cone_r(\ell)$  so that it  is isometric to the metric completion of the universal covering of   a closed   disk of radius $r$ in the real hyperbolic plane $\mathbb H^2$ punctured at the center (see \cite[Part I, Ch. 5]{BH99}).

Given a vertex $v$ in the tree $T$. Let $
\mathbb{L}_{v} = \{ \ell_e = F_{e} \cap H_v  \, |\, e_{-} = v \}
$ be the set of boundary lines in the hyperbolic space $H_v$ (see Definition~\ref{defn:boundaryline}). We attach a hyperbolic $r$--cone to each boundary line $\ell_e$. The resulted space is denoted by $\hat{H}_v$. 

Since $\mathbb L_v$ has the bounded intersection property, it follows from Corollary~5.39 in \cite{DGO} that for a sufficiently large constant $r$, the space {$\widehat{H}_v$} is a hyperbolic space. Since there are finitely many vertices up to $G$--action, the constant $r$ can be chosen large enough so that $\widehat{H}_v$ is a $\delta(r)$--hyperbolic space for all vertex $v$ in $T$. 

The collection of $\delta(r)$--hyperbolic spaces $\bigl \{ \widehat{H}_v \, \bigl |  v \in \mathcal{V}(T) \bigr \}$ is a part in the index set $\Lambda$ (in Definition~\ref{defn:HHS}) that we will define later.

\subsubsection{Elements in the index set $\Lambda$: a collection of quasi-lines}
We need to add another collection of hyperbolic spaces into the collection above as the following.
Let $e$ be an oriented edge in $T$ such that $e_{+} = v$.
Let $\mathfrak f_e$ be a line in the boundary plane $F_e$  parallel to the $\mathbb R$-direction in $B_v = H_v \times \mathbb R$. Recall that two lines  $\mathfrak f_e\ne \mathfrak f_{\bar e}$ in the plane $F_e$ must be intersect.  The   action of  $G_e$ on the set of lines orthogonal to $\mathfrak f_e$ induces an action of $G_e$  on $\mathfrak f_e$. {Moreover, this action of $G_e$ on $\mathfrak f_e$ extends to the boundary plane $F_e$ so that it commutes with the orthogonal projection to $\mathfrak f_e$. Note that this new action of $G_e$ on $F_e$ preserves $\mathfrak f_e$ so does not coincide with the original action on $F_e$.}





\begin{figure}[htb]
\centering 
 \resizebox{0.5\textwidth}{!}{
\begingroup%
  \makeatletter%
  \providecommand\color[2][]{%
    \errmessage{(Inkscape) Color is used for the text in Inkscape, but the package 'color.sty' is not loaded}%
    \renewcommand\color[2][]{}%
  }%
  \providecommand\transparent[1]{%
    \errmessage{(Inkscape) Transparency is used (non-zero) for the text in Inkscape, but the package 'transparent.sty' is not loaded}%
    \renewcommand\transparent[1]{}%
  }%
  \providecommand\rotatebox[2]{#2}%
  \newcommand*\fsize{\dimexpr\f@size pt\relax}%
  \newcommand*\lineheight[1]{\fontsize{\fsize}{#1\fsize}\selectfont}%
  \ifx\svgwidth\undefined%
    \setlength{\unitlength}{260.74816734bp}%
    \ifx\svgscale\undefined%
      \relax%
    \else%
      \setlength{\unitlength}{\unitlength * \real{\svgscale}}%
    \fi%
  \else%
    \setlength{\unitlength}{\svgwidth}%
  \fi%
  \global\let\svgwidth\undefined%
  \global\let\svgscale\undefined%
  \makeatother%
  \begin{picture}(1,1.29837718)%
    \lineheight{1}%
    \setlength\tabcolsep{0pt}%
    \put(0,0){\includegraphics[width=\unitlength,page=1]{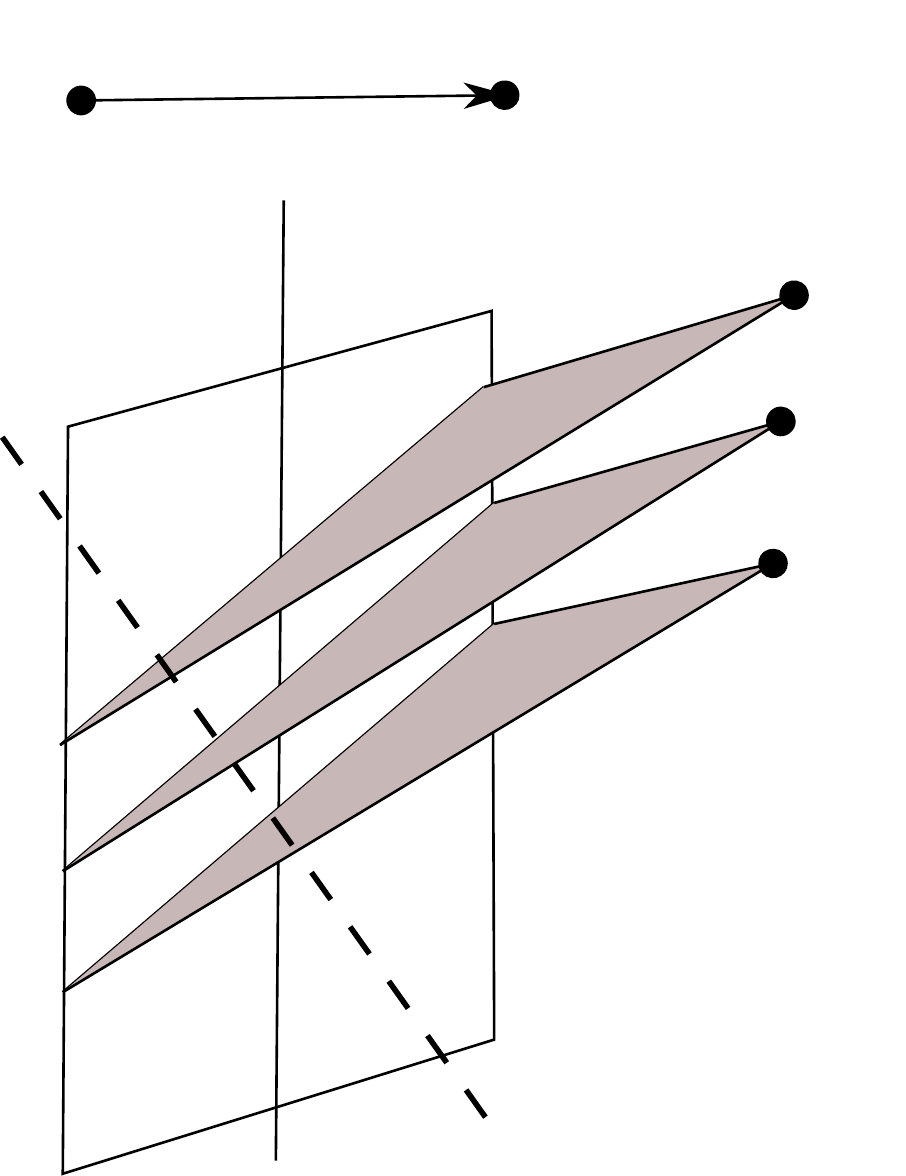}}%
    \put(0.04318109,1.26296226){\color[rgb]{0,0,0}\makebox(0,0)[lt]{\lineheight{1.25}\smash{\begin{tabular}[t]{l}\Large $v$\end{tabular}}}}%
    \put(0.54583959,1.25424566){\color[rgb]{0,0,0}\makebox(0,0)[lt]{\lineheight{1.25}\smash{\begin{tabular}[t]{l}\Large $w$\end{tabular}}}}%
    \put(0.28143541,1.22809579){\color[rgb]{0,0,0}\makebox(0,0)[lt]{\lineheight{1.25}\smash{\begin{tabular}[t]{l}\Large $e$\end{tabular}}}}%
    \put(0.55165075,0.04263515){\color[rgb]{0,0,0}\makebox(0,0)[lt]{\lineheight{1.25}\smash{\begin{tabular}[t]{l}\Large  $H_w \cap F_{e}$\end{tabular}}}}%
    \put(0,0){\includegraphics[width=\unitlength,page=2]{thick.pdf}}%
    \put(0.139064,0.9142974){\color[rgb]{0,0,0}\makebox(0,0)[lt]{\lineheight{1.25}\smash{\begin{tabular}[t]{l}$F_e$\end{tabular}}}}%
  \end{picture}%
\endgroup%
}
\caption{Dashed-line is the boundary line $\ell_w: = H_w \cap F_e$ of $H_w$. Each fiber line in the collection $\mathbb K_{\bar e}$ which is perpendicular to $H_w \cap F_e$ is coned-off.}
\label{figure:thick}
\end{figure}

We choose a  $Z(G_v)$--invariant set $\mathbb K_{\bar e}$ of fiber lines of $B_w$  in $F_e$ orthogonal to the boundary line $H_w \cap F_e$ of $H_w$ in $F_e$. A hyperbolic cone of radius $r=1$ is then attached to $F_e$ along each line in $\mathbb K_{\bar e}$. The resulted coned-off space denoted by $\widehat{F}_e$. 
We note that    $\widehat {F}_e$ is quasi-isometric to a line and thus it is a hyperbolic space.

Let $Lk(v)$ and $St(v)$ be the link and the star of $v$ in $T$. Then $Fl(v):=St(v)\times \mathbb R$ is the union of flat strips $S_e$'s (of width 1) corresponding to the set of oriented edges $e=[w,v]$  towards $v$. 
\begin{figure}[htb] 
\centering \scalebox{0.8}{
\includegraphics{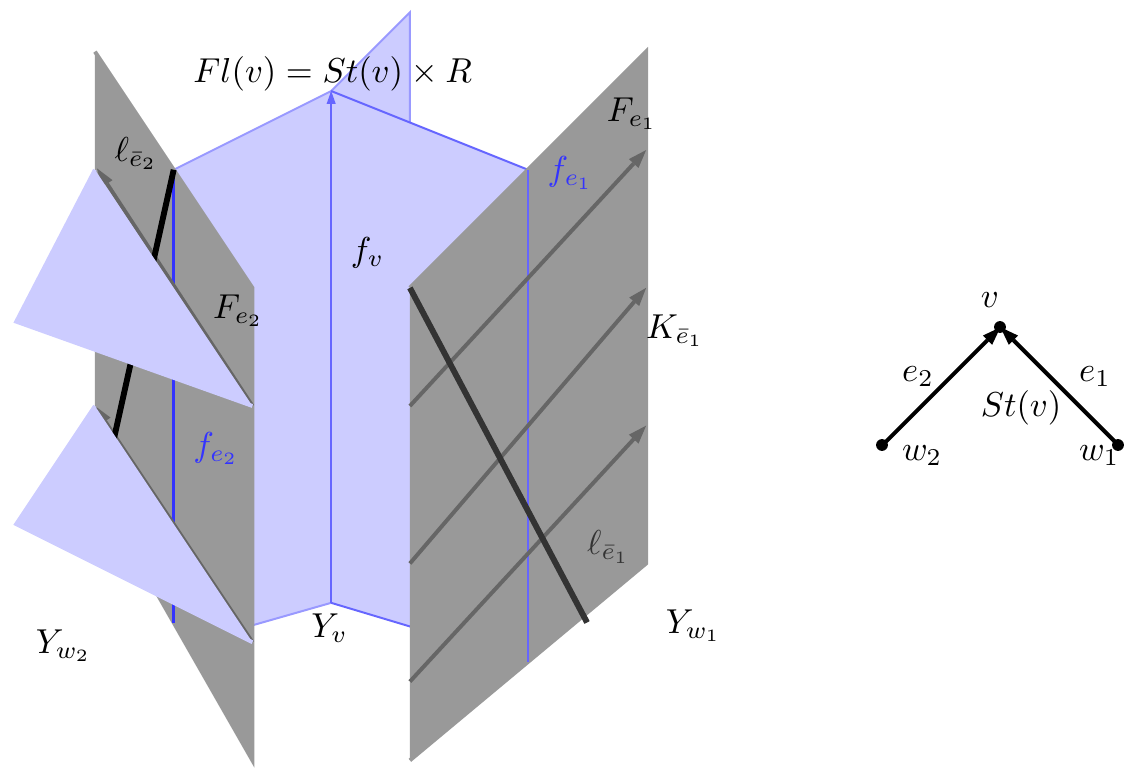} 
} \caption{The picture illustrates the space $\mathcal{R}_v$ is obtained by identifying boundary lines of strips $S_e$'s with marked lines $\frak{f}_e$ in $\widehat{F}_e$. The set of hyperbolic cones attached along lines in $\mathbb K_{\bar e_1}$ not shown in the figure. { The resulted space $\mathcal{R}_v$ is a quasi-line.}} \label{figure2}
\end{figure} 
Let $\mathcal{R}_v$ be the space obtained from the disjoint union   $Fl(v) \sqcup_{e\in St(v)} \widehat{F}_{e}$  with the boundary lines of every strip $S_{e}$ in $Fl(v)$ are identified with the marked  lines $\mathfrak f_e$ in $\widehat{F}_e$.  Endowed with length metric, $\mathcal{R}_v$ is  quasi-isometric  a quasi-line, so it is a hyperbolic space. We refer the reader to Figure~\ref{figure2} for this illustration.

\begin{remark}
\label{remark:useful}
By our construction of $\mathcal{R}_{v}$'s, there exists a uniform constant $r >0$ such that the following holds.
\begin{enumerate}
\item $\mathcal{R}_v$ is a $r$--hyperbolic space for every vertex $v$ in $T$.
    \item Let $v$ be an arbitrary vertex in $T$, and let $w$ be a vertex that is adjacent to $v$. Denote the edge joining two vertices $v$ and $w$ by $e$. Let $F_{e}$ be the boundary plane in $B_v$ associated to the edge $e$. Let $\ell$ be an arbitrary line in the plane $F_e$ that is parallel to the $\mathbb{R}$--factor of the adjace pice $B_w = H_{w} \times \mathbb{R}$ of $B_v$. Then the diameter of $\ell$ in $(\mathcal{R}_v, d_{\mathcal{R}_v})$ is no more than $r$.
    \item Let $\ell$ and $\ell'$ be two lines in the plane $F_e$ that is parallel to the $\mathbb{R}$--factor of the adjacent piece $B_w$. These two lines intersect the boundary line $\ell_{w} := F_{e} \cap H_w$ at two points $x$ and $y$ respectively. Then $\bigl | \diam_{(\mathcal{R}_v, d_{\mathcal{R}_{v})}} (\ell \cup \ell') - d(x, y) \bigr | \le 2r$.
\end{enumerate}
\end{remark}

\begin{definition}[The index set $\Lambda$]
We define the index set $\Lambda$ to be the union of the Bass-Serre tree $T$, the coned-off spaces $\widehat{H}_v$, and the thickened fiber lines $\mathcal{R}_v$, i.e,
$$
\Lambda : = \{T\} \cup \bigl \{\mathcal{R}_{v} : v \in V(T) \bigr \} \cup \bigl \{\widehat{H}_v : v \in V(T) \bigr \}
$$
\end{definition}





\subsection{Verifying the HHS Axioms}
To prove Proposition~\ref{prop:part1}, we need to associate to each element in this index  set $\Lambda$ a hyperbolic space, and verify that this index set together with these hyperbolic spaces satisfy HHS's axioms in Definition~\ref{defn:HHS}.

Recall that $G \curvearrowright X$ is a \CAT admissible action. Since HHS preserves under quasi-isometries (see Proposition~1.10 in \cite{BHS19}), it suffices to show that the orbit space $G(x_0) \subset X$ is a HHS for some $x_0 \in X$. Thus we assume, without loss
of generality that $x_0$ belongs to a plane $F_{e_0}$ for some edge $e_0$ in $T$. By abusing notation, {\it we still denote $G(x_0)$ by $X$.}

First at all, each element $W$ in the index set $\Lambda$ is associated to a hyperbolic space as the following: 
If $W = T$, we let $\calC W$ denote the Bass-Serre tree $T$ equipped with the metric $d_T$.
 If $W = \mathcal{R}_v$ for some vertex $v$, we let $\calC W$ denote the space $\mathcal{R}_v$ equipped  with the metric $d_{\mathcal{R}_v}$ described early. We recall that $ \bigl ( \mathcal{R}_v, d_{\mathcal{R}_v} \bigr )$ is quasi-isometric to a line, and thus $ \bigl ( \mathcal{R}_v, d_{\mathcal{R}_v} \bigr )$ is a hyperbolic space.
    If $W = \widehat{H}_v$ for some vertex $v$, we let $\calC W$ denote the space $\widehat{H}_v$ equipped with the coned-off metric $d_{\widehat{H}_v}$ described early in previous paragraphs.
    Note that these spaces are all hyperbolic spaces, and there are finitely many vertices up to $G$ action, thus there exists a constant $\delta >0$ such that $\calC W$ is a  $\delta$--hyperbolic space for all $W$ in the index set $\Lambda$.

 \subsubsection{ Projections}
\label{projectionaxiom} 
  In this section we are going to verify the Projection Axiom of Definition~\ref{defn:HHS}. Recall that $x_0$ belongs to a plane $F_{e_0}$ for some edge $e_0$ in $T$.  Let $W$ be an element in the index set $\Lambda$, we are going to define {\it projections}  $\pi_{W} \colon X \to 2^{CW}$ as follows.

 Assume that $W =T$, then we define the $\pi_{W} = \pi_T$ to be the index map $\pi \colon X \to T$ given by Remark~\ref{rem:indexfunction}.

     Now, assume that $W = \widehat{H}_v$ for some vertex $v \in T$.  then we define the projection $\pi_W \colon X \to 2^{CW}$ to be the nearest point projection from $X$ into $H_v \subset \widehat{H}_v$.

     Suppose that $W = \mathcal{R}_v$ for some vertex $v \in V(T)$. We are going to define the projection $\pi_{X} \colon X \to 2^{CW} = 2^{\mathcal{R}_v}$ as the following:
     Let $x$ be an arbitrary element in $X$, then $\pi(x)$ is a vertex in the tree $T$. We note that $x \in Y_{\pi(x)}$. If $\pi(x) = v$ then we define $\pi_{W}(x) : = x$. If $\pi(x)$ and $v$ are two adjacent vertices then we let $\bar{x}$ be the projection of $x$ into be base $H_{\pi(x)}$ of $Y_{\pi(x)} = H_{\pi(x)} \times \mathbb{R}$. Let $\alpha$ be a shortest geodesic in $H_{\pi(x)}$ joining $\bar{x}$ to the boundary line $\ell_{e} : = F_{e} \cap H_{\pi(x)}$ where $e : = [v, \pi(x)]$. Let $\mathcal{S}_{\bar{x}}$ be the strip in $Y_{\pi(x)}$ defined by $\alpha \times \mathbb{R}$. We then define $\pi_{W}(x)$ to be the subset $\mathcal{S}_{\bar{x}} \cap F_{e}$ of $F_{e} \subset \mathcal{R}_v$. If $d_{T}(v, \pi(x)) \ge 2$, then we let $\gamma$ be the geodesic in $T$ connecting $\pi(x)$ to $v$. We write $\gamma = e_{1} \cdots e_{k-1}e_{k}$ with $(e_1)_{-} = \pi(x)$ and $(e_k)_{+} = v$, then we define $\pi_{W}(x) : = \mathcal{S}_{e_{k-1}, e_{k}} \cap F_{e_k} \subset \mathcal{R}_{v}$. We remind here that the intersection $\mathcal{S}_{e_{k-1}, e_{k}} \cap F_{e_k}$ is a line in the plane $F_{e_k}$, but this line in the coned-off space $\mathcal{R}_v$ is no longer a line, and it is a bounded set in $\mathcal{R}_v$ (see Remark~\ref{remark:useful}).

To verify the HHS Axioms in Definition~\ref{defn:HHS}, we will use the following lemma.


\begin{lemma}
\label{lem:K}
There exists a uniform constant $A >0$ such that the following holds. Let $u$, $v$, and $w$ be vertices in $T$ such that $u, v \neq w$, and $u, v, w \in lk(o)$ for some vertex $o$. Let $e = [w,o]$, $e_1 = [u,o]$ and $e_2 = [v,o]$. Let $\ell_e$, $\ell_{e_1}$, $\ell_{e_2}$ be boundary lines in $H_v$ associated to the edges $e$, $e_1$, and $e_2$ respectively. Suppose that $x$ and $y$ are two points in $X$ such that $\pi(x) =u$ and $\pi(y) =v$. Then 
\[
\diam \bigl (\pi_{\mathcal{R}_w}(x,y)  \bigr ) \le A d_{\ell_{e}}(\ell_{e_1}, \ell_{e_2}) + A
\] where $\pi_{\mathcal{R}_w}(x,y) := \pi_{\mathcal{R}_{w}}(x) \cup \pi_{\mathcal{R}_{w}}(y)$ and $d_{\ell_{e}}(\ell_{e_1}, \ell_{e_2}) : = \diam \pi_{\ell_{e}}(\ell_{e_1}) \cup \pi_{\ell_{e}}(\ell_{e_1})$ with $\pi_{\ell_e} (\ell_{e_i})$ is the projection from $\ell_{e_i}$ into $\ell_e$ in the CAT(0) hyperbolic space $H_o$.
\end{lemma}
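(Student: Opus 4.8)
The plan is to unwind the definition of the projection $\pi_{\mathcal{R}_w}$ in the case of tree-distance exactly $2$, identify $\pi_{\mathcal{R}_w}(x)$ and $\pi_{\mathcal{R}_w}(y)$ with single fiber lines of $B_o$ sitting inside the plane $F_e$, and then read the claimed estimate off Remark~\ref{remark:useful}. To begin, since $u,v,w\in lk(o)$, since $u,v\neq w$, and since $T$ is a tree, we have $d_T(u,w)=d_T(v,w)=2$, and the $T$-geodesics are $u\to o\to w$ and $v\to o\to w$; writing $\bar e=[o,w]$, these geodesics are $e_1\bar e$ and $e_2\bar e$. Feeding $\pi(x)=u$ into the ``$d_T\ge 2$'' clause of the definition of $\pi_{\mathcal{R}_w}$ then gives $\pi_{\mathcal{R}_w}(x)=\mathcal{S}_{e_1,\bar e}\cap F_{\bar e}$ --- and in particular this set depends only on the vertex $u$, not on the point $x$ --- and likewise $\pi_{\mathcal{R}_w}(y)=\mathcal{S}_{e_2,\bar e}\cap F_{\bar e}$.

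Next I would use the product decomposition $B_o=H_o\times\mathbb{R}$. In these coordinates $F_{\bar e}=F_e=\ell_e\times\mathbb{R}$ and $\mathcal{S}_{e_1,\bar e}=\gamma_{e_1,\bar e}\times\mathbb{R}$, where $\gamma_{e_1,\bar e}\subset H_o$ is a shortest geodesic between the boundary lines $\ell_{e_1}$ and $\ell_e$; hence $\pi_{\mathcal{R}_w}(x)=\{p_x\}\times\mathbb{R}\subset F_e$, where $p_x$ is the endpoint of $\gamma_{e_1,\bar e}$ on $\ell_e$. Because $\gamma_{e_1,\bar e}$ realizes $d_{H_o}(\ell_{e_1},\ell_e)$, its endpoint $p_x$ is a nearest point of $\ell_e$ to $\ell_{e_1}$, i.e. $p_x\in\pi_{\ell_e}(\ell_{e_1})$; symmetrically $\pi_{\mathcal{R}_w}(y)=\{p_y\}\times\mathbb{R}$ with $p_y\in\pi_{\ell_e}(\ell_{e_2})$. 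Now apply Remark~\ref{remark:useful}(3), taking the vertex ``$v$'' there to be our $w$, the adjacent vertex ``$w$'' there to be $o$, and the joining edge to be $e$: the two lines $\{p_x\}\times\mathbb{R}$ and $\{p_y\}\times\mathbb{R}$ are parallel to the $\mathbb{R}$-factor of the adjacent block $B_o$ and meet $F_e\cap H_o=\ell_e$ at $p_x$ and $p_y$, so the Remark yields $\diam_{\mathcal{R}_w}\bigl(\pi_{\mathcal{R}_w}(x)\cup\pi_{\mathcal{R}_w}(y)\bigr)\le d_{H_o}(p_x,p_y)+2r$. Since $p_x\in\pi_{\ell_e}(\ell_{e_1})$ and $p_y\in\pi_{\ell_e}(\ell_{e_2})$, we get $d_{H_o}(p_x,p_y)\le\diam\bigl(\pi_{\ell_e}(\ell_{e_1})\cup\pi_{\ell_e}(\ell_{e_2})\bigr)=d_{\ell_e}(\ell_{e_1},\ell_{e_2})$, so with $A:=\max\{1,2r\}$ and using $d_{\ell_e}(\ell_{e_1},\ell_{e_2})\ge 0$ we obtain $\diam\bigl(\pi_{\mathcal{R}_w}(x,y)\bigr)\le A\,d_{\ell_e}(\ell_{e_1},\ell_{e_2})+A$, which is the assertion.

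The tree combinatorics and the product-structure computation are routine, and the passage from the endpoint of a distance-realizing geodesic to the nearest-point projection set is elementary metric geometry. The one point that needs care --- and, I expect, the only place a reader might stumble --- is correctly matching the present configuration to the hypotheses of Remark~\ref{remark:useful}(3): one must keep straight that the fiber lines $\{p_x\}\times\mathbb{R}$ and $\{p_y\}\times\mathbb{R}$ run along the $\mathbb{R}$-direction of $B_o$ (the block ``between'' $u,v$ and $w$), while the boundary lines $\ell_{e_1},\ell_{e_2},\ell_e$ all live in the hyperbolic space $H_o$. Once $\pi_{\mathcal{R}_w}(x)$ and $\pi_{\mathcal{R}_w}(y)$ have been identified as these fiber lines, the estimate is exactly the content of that Remark, so I do not foresee a genuine obstacle.
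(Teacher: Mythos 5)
Your proposal is correct and follows essentially the same route as the paper's own proof: identify $\pi_{\mathcal{R}_w}(x)$ and $\pi_{\mathcal{R}_w}(y)$ from the $d_T\ge 2$ clause of the projection definition as the fiber lines $\mathcal{S}_{e_1,\bar e}\cap F_e$ and $\mathcal{S}_{e_2,\bar e}\cap F_e$ in $F_e$, read off their intersection points $p_x,p_y$ with the boundary line $F_e\cap H_o$, invoke Remark~\ref{remark:useful}(3) to control $\diam_{\mathcal{R}_w}$ by $d_{H_o}(p_x,p_y)$ up to $2r$, and relate $p_x,p_y$ to the nearest-point projections of $\ell_{e_1},\ell_{e_2}$ onto $\ell_e$. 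The only substantive difference is a small sharpening: the paper merely asserts that $p_x,p_y$ lie within a uniformly bounded distance of $\pi_{\ell_e}(\ell_{e_1})$ and $\pi_{\ell_e}(\ell_{e_2})$, whereas you observe that $p_x$ is literally an element of $\pi_{\ell_e}(\ell_{e_1})$ (being the endpoint of a distance-realizing geodesic in the CAT(0) space $H_o$), which cleans up the constant. You also dispense with the separate $u=v$ case that the paper treats at the outset, since your estimate degenerates gracefully when $p_x=p_y$. Both treatments implicitly correct the same typos in the lemma's statement ($H_v$ should read $H_o$, and the duplicated $\pi_{\ell_e}(\ell_{e_1})$ in the definition of $d_{\ell_e}$ should be $\pi_{\ell_e}(\ell_{e_2})$); your reading matches what the paper's proof actually uses.
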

\begin{proof}
If $u =v$ then the result is vacuously true since both $\pi_{\mathcal{R}_{w}}(x)$ and $\pi_{\mathcal{R}_{w}}(y)$ are the same. We thus assume that $u \neq v$. By our definition of projections, we have $\pi_{\mathcal{R}_{w}}(x) = \mathcal{S}_{e_1 e} \cap F_e$ and $\pi_{\mathcal{R}_{w}}(y) = \mathcal{S}_{e_2 e} \cap F_e$. Note that $\pi_{\mathcal{R}_w}(x)$ and $\pi_{\mathcal{R}_w}(y)$ are parallel lines in $F_e$. These two lines intersect the boundary line $\ell_e$ at two points denoted by $\bar{x}$ and $\bar{y}$. By Remark~\ref{remark:useful}, we have that
\[
\bigl | \diam_{(\mathcal{R}_{w}, d_{\mathcal{R}_{w}})} \bigl (\pi_{\mathcal{R}_w}(x,y)  \bigr ) - d(\bar{x}, \bar{y}) \bigr | \le 2r
\]
We note that $\bar{x}$ and $\bar{y}$ are within a uniformed bounded distance from $\pi_{\ell_e} (\ell_{e_1})$ (projection from $\ell_{e_1}$ into $\ell_e$ in the CAT(0) hyperbolic space $H_o$) and $\pi_{\ell_e} (\ell_{e_2})$ respectively. It follows that there exists a uniform constant $A >0$ such that the conclusion of the lemma holds.


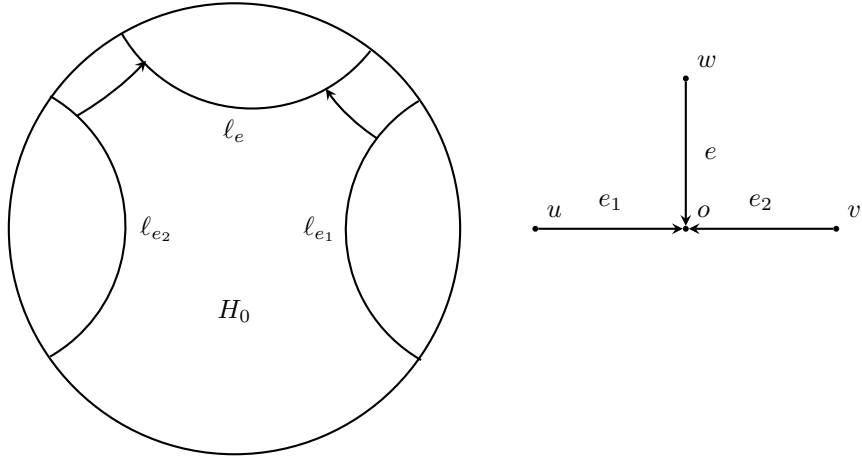
\begin{figure}[h!]
\begin{tikzpicture}
 \tikzstyle{vertex} =[circle,draw,fill=black,thick, inner sep=0pt,minimum size=.5 mm]
 
[thick, 
    scale=1,
    vertex/.style={circle,draw,fill=black,thick,
                   inner sep=0pt,minimum size= .5 mm},
                  
      trans/.style={thick,->, shorten >=6pt,shorten <=6pt,>=stealth},
   ]

    \draw[thick] (3,0) arc (0:360:3);
    \draw[thick] (2.45, 1.7) arc (121:240:2);
      \draw[thick] (-2.45, -1.7) arc (300:420:2);
      \draw[thick] (-1.5, 2.6) arc (210:322:2);
      
       \draw[thick, ->] (1.9, 1.2) arc (237:215:2.5);
       
       \draw[thick, ->] (-2.1, 1.5) arc (300:317:4);

          \node[vertex] (a) at (4, 0) [label=above right: $u$]  {};
          
           \node at (1.6, 0) [label=left : $\ell_{e_1}$]  {};
           \node at (-1.5, 0) [label=right : $\ell_{e_2}$]  {};
            \node at (0, 1.7) [label=below : $\ell_{e}$]  {};
            \node at (0, -0.7) [label=below : $H_0$]  {};

          \node at (5, 0) [label=above : $e_1$]  {};
          
          \node at (7, 0) [label=above : $e_2$]  {};
          \node at (6, 1) [label=right : $e$]  {};
          
          \node[vertex] (b) at (6, 0) [label=above right: $o$]  {};
          \node[vertex] (c) at (8, 0) [label=above right: $v$]  {};
         \node[vertex] (d) at (6, 2) [label=above right: $w$]  {};

  \draw [thick,->](a)--(b);
  \draw [thick,->] (d)--(b);
  \draw [thick,->](c)--(b);
  
\end{tikzpicture}
\caption{Two arrows are within a uniform Hausdorff distance from projections of $\mathcal{S}_{e_1e}$ and $\mathcal{S}_{e_2e}$ into $H_o$}
\label{Fig:YoYo}
\end{figure}

\end{proof}

\begin{lemma}
$\bigl (X, \Lambda, \{\pi_{W} \} \bigr ) $ satisfies the Projection Axiom.
\end{lemma}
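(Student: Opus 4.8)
The plan is to verify the two requirements of the Projection Axiom separately: first, that each $\pi_W$ has image of uniformly bounded diameter on each point (or rather that $\diam_{\calC W}(\pi_W(x)) \le \xi$ for a uniform $\xi$), is coarsely Lipschitz, and has uniformly quasiconvex image $\pi_W(X)$. I would treat the three kinds of coordinate spaces $W = T$, $W = \widehat H_v$, and $W = \mathcal R_v$ in turn, since the arguments are of a different flavour in each case, and then appeal to the finiteness of the number of $G$--orbits of vertices to convert all the per-vertex bounds into a single uniform $\xi$ and a single coarse-Lipschitz constant $K$.

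For $W = T$: here $\pi_T$ is the index map $\pi$ of Remark~\ref{rem:indexfunction}, which is a point of $T$, so $\diam_T(\pi_T(x)) = 0 \le \xi$ trivially; it is coarsely $L$--Lipschitz by Remark~\ref{rem:indexfunction}; and its image is all of $V(T)$, which is $0$--quasiconvex in $T$. For $W = \widehat H_v$: $\pi_{\widehat H_v}$ is the nearest-point projection to $H_v \subset \widehat H_v$, so $\diam(\pi_{\widehat H_v}(x))$ is bounded because nearest-point projections in a $\CAT$ space to a convex set are coarsely well-defined (one can take $\pi_{\widehat H_v}(x)$ to be a single point, so $\xi=0$ suffices here, or quote the standard coarse bound); the composition of the $\CAT$ nearest-point projection with the inclusion $H_v \hookrightarrow \widehat H_v$ is coarsely Lipschitz because the inclusion is $1$--Lipschitz and $\CAT$ nearest-point projection to a convex subset is $1$--Lipschitz; and the image is (coarsely) all of $H_v$, which is quasiconvex in $\widehat H_v$ because $\widehat H_v$ is obtained from $H_v$ by coning off the uniformly quasiconvex family $\mathbb L_v$, so $H_v$ sits quasiconvexly in the coned-off space by the Dahmani--Guirardel--Osin machinery (Corollary~5.39 of \cite{DGO}), with constants uniform over the finitely many $G$--orbits of vertices.

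The substantive case is $W = \mathcal R_v$. The bound $\diam_{\mathcal R_v}(\pi_{\mathcal R_v}(x)) \le \xi$ follows directly from parts (1) and (2) of Remark~\ref{rem:indexfunction}... more precisely from Remark~\ref{remark:useful}: when $\pi(x) = v$ the set $\pi_{\mathcal R_v}(x)$ is a single point $x$; when $\pi(x)$ is adjacent to $v$ the set $\pi_{\mathcal R_v}(x) = \mathcal S_{\bar x} \cap F_e$ is a line in $F_e$ parallel to the $\mathbb R$--factor of $B_{\pi(x)}$, hence has $\mathcal R_v$--diameter at most $r$ by Remark~\ref{remark:useful}(2); and when $d_T(v,\pi(x)) \ge 2$ the set $\mathcal S_{e_{k-1}e_k} \cap F_{e_k}$ is again such a line, so again has $\mathcal R_v$--diameter at most $r$. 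Coarse Lipschitzness of $\pi_{\mathcal R_v}$ is where Lemma~\ref{lem:K} does the work: for $x,y \in X$ I would relate $d_{\mathcal R_v}(\pi_{\mathcal R_v}(x),\pi_{\mathcal R_v}(y))$ to the tree distances and to nearest-point projections of boundary lines in the various $\CAT$ hyperbolic spaces $H_o$ along the geodesic $[\pi(x),\pi(y)]$ in $T$, so that Lemma~\ref{lem:K} (together with the coarse Lipschitzness of the index map and standard projection estimates in hyperbolic spaces) bounds the displacement linearly in $d_X(x,y)$. For quasiconvexity of $\pi_{\mathcal R_v}(X)$: since $\mathcal R_v$ is quasi-isometric to a line, \emph{every} subset of $\mathcal R_v$ is quasiconvex with a uniform constant depending only on the QI constants, so this requirement is automatic once we observe $\mathcal R_v$ is a quasi-line with uniform constants over the finitely many orbits. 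I expect the main obstacle to be the coarse-Lipschitz estimate for $\pi_{\mathcal R_v}$: one has to carefully track how a unit step of $x$ in $X$ affects, through the index map and the strip construction, the chain $\mathcal S_{e_{k-1}e_k}\cap F_{e_k}$, chasing the effect through possibly-long geodesics in $T$ and invoking the bounded-intersection/quasiconvexity properties (Lemma~\ref{lemma:epsilon}, Remark~\ref{rem:rdistance}) at each vertex crossed — but the key nonlinearities are exactly what Lemma~\ref{lem:K} was designed to control, so the argument reduces to assembling that lemma with routine hyperbolic-geometry bookkeeping.
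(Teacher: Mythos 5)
Your decomposition into the three cases $W = T$, $W = \widehat H_v$, $W = \mathcal R_v$, and checking the three requirements (bounded diameter, coarse Lipschitzness, quasiconvex image) in each, is exactly the right structure; the paper's own proof is a terse assertion that all three conditions are ``clear from the definitions,'' citing only Remark~\ref{rem:indexfunction} for coarse Lipschitzness of $\pi_T$, so you are supplying details the paper leaves implicit. The cases $W=T$ and $W=\widehat H_v$ are handled correctly. For $W = \mathcal R_v$, the diameter bound via Remark~\ref{remark:useful} is right, and the coarse Lipschitz estimate does go through, though you are overselling the role of Lemma~\ref{lem:K} there: the decisive observation is that when $\pi(x)$ and $\pi(y)$ are both at tree-distance $\geq 2$ from $v$ and close to each other in $T$, the two geodesics to $v$ share their last two edges, so $\pi_{\mathcal R_v}(x)$ and $\pi_{\mathcal R_v}(y)$ are \emph{literally equal}; the only work is for the finitely many configurations where $\pi(x)$ or $\pi(y)$ is within bounded tree-distance of $v$, and those Lemma~\ref{lem:K} does not directly address since it assumes $u,v \neq w$ and $u,v,w \in \mathrm{lk}(o)$.

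The genuine error is the claim that ``since $\mathcal R_v$ is quasi-isometric to a line, \emph{every} subset of $\mathcal R_v$ is quasiconvex with a uniform constant.'' That is false: in $\mathbb R$ the subset $\{0, N\}$ is not $K$-quasiconvex for any $K$ once $N > 2K$, since the midpoint of the geodesic $[0,N]$ is at distance $N/2$ from the set, and this failure survives quasi-isometry. The conclusion you want is nonetheless true, but for a different reason: $\pi_{\mathcal R_v}(X)$ is \emph{coarsely dense} in $\mathcal R_v$. Indeed $X = G(x_0)$ is coarsely dense in the ambient $\CAT$ space by cocompactness, so the points $x$ with $\pi(x)=v$ are coarsely dense in the boundary planes $F_e$ with $e_+ = v$, hence their images under $\pi_{\mathcal R_v}$ are coarsely dense in the coned-off planes $\widehat F_e$, and the width-one flat strips $S_e$ that complete $\mathcal R_v$ lie within bounded distance of these planes. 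A coarsely dense subset of any geodesic space is quasiconvex with a uniform constant, so the Projection Axiom's quasiconvexity requirement holds; you should replace the quasi-line argument with this coarse-density argument. (The same coarse-density observation, rather than the DGO citation, is really what gives quasiconvexity of $\pi_{\widehat H_v}(X)$ in $\widehat H_v$ as well, since the cones have fixed radius.)
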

\begin{proof}
By our definition of projection maps in Section~\ref{projectionaxiom}, it is obvious that $\diam_{\calC W} \bigl (\pi_{W}(x) \bigr ) \le \xi$ for some uniform constant $\xi >0$. Note that each $\pi_{W}$ is coarsely Lipschitz map. Indeed, this fact is clear when $W \in \Lambda \backslash \{T\}$. In the case $W =T$ then the coarse Lipschitz property of $\pi_W$ is followed from Remark~\ref{rem:indexfunction}. Also,  it is clear from our definition of projection maps that $\pi_{W}(X)$ is $K$--quasiconvex in $\calC W$ for some constant $K$ large enough. Hence Projection Axiom is verified. 
\end{proof}

 \subsubsection{Nesting}
 \label{nesting}
 We equip the index set $\Lambda$ with a partial order $\sqsubseteq$ as follows:
 We declare $W \sqsubseteq W$ (i.e, $W$ is nested in itself) for all $W$ in the index set $\Lambda$.
    For each $W \in \Lambda$, we declare that $W \sqsubseteq T$ (i.e, every element in $\Lambda$ is nested in $T$).
     Let  $V, W$ be distinct elements in $\Lambda \backslash \{T\}$. We say $V$ is {\it properly nested} in $W$ if $\calC W = \widehat{H}_v$ for some $v \in V(T)$ and $\calC V= \mathcal{R}_{w}$ for some vertex $w \in T$ such that $v$ and $w$ are adjacent vertices.

 \begin{lemma}
  $\bigl ( X, \Lambda, \sqsubseteq, \{ \pi_W \} \bigr )$ satisfies the Nesting Axiom.
 \end{lemma}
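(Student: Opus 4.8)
The plan is to verify the three defining properties of the Nesting Axiom in Definition~\ref{defn:HHS} directly from the partial order $\sqsubseteq$ defined just above the lemma, namely: that $\Lambda$ has a unique $\sqsubseteq$--maximal element, that for each properly nested pair $V \sqsubsetneq W$ there is a subset $\rho_W^V \subset \calC W$ of diameter at most $\xi$, and that there is a coarse projection $\rho_V^W \colon \calC W \to 2^{\calC V}$. The maximal element is immediate: by construction every element of $\Lambda$ is nested in $T$, and $T$ is nested only in itself, so $S := T$ is the unique $\sqsubseteq$--maximal element; reflexivity and antisymmetry are built into the definition, and transitivity only needs to be checked on the short chains $\mathcal{R}_w \sqsubseteq \widehat{H}_v \sqsubseteq T$, which holds vacuously since $\mathcal{R}_w \sqsubseteq T$ by fiat.

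Next I would treat the two genuinely nontrivial pairs. First, the pair $W \sqsubsetneq T$: here $\calC W$ is either some $\widehat{H}_v$ or some $\mathcal{R}_v$, and in both cases the associated vertex (or, for $\mathcal{R}_v$, the vertex $v$) gives a natural point $\rho_T^W := v \in V(T) = \calC T$, which trivially has diameter zero, hence at most $\xi$. The downward projection $\rho_W^T \colon \calC T = T \to 2^{\calC W}$ I would define using the index/strip machinery already set up in Section~\ref{projectionaxiom}: a vertex $u \in T$ projects, via the geodesic in $T$ from $u$ to $v$ and the chosen strips $\mathcal{S}_{e_{k-1},e_k}$, to the corresponding bounded subset of $\calC W$ exactly as the points of $X$ did; the diameter bound is Remark~\ref{remark:useful}. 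Second, the pair $\mathcal{R}_w \sqsubsetneq \widehat{H}_v$ with $v,w$ adjacent and $e = [w,v]$: the boundary line $\ell_e = F_e \cap H_v$ is coned off in $\widehat{H}_v$, so its image in $\calC(\widehat{H}_v) = \widehat{H}_v$ is contained in a bounded set (the cone over $\ell_e$ has bounded diameter), and I would set $\rho_{\widehat{H}_v}^{\mathcal{R}_w}$ to be this bounded set — or equivalently the cone apex — giving the required diameter bound with $\xi$ taken large enough to absorb the cone radius $r$. For the reverse projection $\rho_{\mathcal{R}_w}^{\widehat{H}_v} \colon \widehat{H}_v \to 2^{\mathcal{R}_w}$ I would send a point of $H_v \subset \widehat{H}_v$ first to its nearest-point projection onto $\ell_e$ in $H_v$ (Lemma~\ref{lemma:epsilon} guarantees this is coarsely well defined), then identify $\ell_e$ with the fiber-line direction inside $F_e \subset \mathcal{R}_w$, and extend over the cones in $\widehat{H}_v$ by sending each cone to the image of the line it is glued along.

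The main obstacle I anticipate is not any single estimate but pinning down the definitions of $\rho_V^W$ cleanly enough that the diameter bounds hold \emph{uniformly} — in particular making sure the constant $\xi$ chosen here is compatible with the one used in the Projections subsection and with the hyperbolicity constants of the $\widehat{H}_v$ and $\mathcal{R}_v$, and checking that the bounded-intersection/quasiconvexity input (Lemma~\ref{lemma:epsilon}, Remark~\ref{rem:rdistance}, Remark~\ref{remark:useful}) really does bound the diameters of $\rho_{\widehat{H}_v}^{\mathcal{R}_w}$ and its image after coning. Once the projections are named, the verification itself is a short diagram-chase: reflexivity, the unique maximum $S = T$, and the two diameter bounds, each following from one of the already-established remarks or lemmas. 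I would therefore organize the proof as: (i) recall the partial order and observe $S = T$ is the unique maximal element; (ii) define $\rho_T^W$ and $\rho_W^T$ and cite Remark~\ref{remark:useful} for the bound; (iii) define $\rho_{\widehat{H}_v}^{\mathcal{R}_w}$ and $\rho_{\mathcal{R}_w}^{\widehat{H}_v}$ and cite Lemma~\ref{lemma:epsilon} and the cone radius for the bound; (iv) conclude.
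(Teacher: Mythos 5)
Your proposal takes essentially the same route as the paper's: $T$ is identified as the unique maximal element, and for the two proper-nesting patterns ($W = T$, and $W = \widehat{H}_v$ with $V = \mathcal{R}_w$) you define $\rho_W^V$ as the associated vertex or cone apex and $\rho_V^W$ via the strip/nearest-point-projection machinery, citing Remark~\ref{remark:useful} for the diameter bound, exactly as the paper does. The only difference is cosmetic: the paper explicitly handles the case $d_T(u,v) \le 1$ in $\rho_V^T$ by declaring $\rho_V^T(u)$ to be all of $\calC V$, which your sketch leaves implicit, but since the Nesting Axiom places no diameter constraint on $\rho_V^W$ this is harmless.
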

 
 \begin{proof}
By our definition of $\sqsubseteq$, every element in the index set is nested in $T$, hence the element $T \in \Lambda$ is the unique $\sqsubseteq$--maximal element. For each $W$ in the index set $\Lambda$, we denote by $\Lambda_{W}$ the set of $V \in \Lambda$ such that $V \sqsubseteq W$. 

Assume that $V, W \in \Lambda$ so that $V$ is properly nested in $W$. We are going to define a specific subset $\rho_{W}^{V} \subset \calC W$ such that $\diam (\rho_{W}^{V}) \le \xi$.   Also, there is a projection $\rho_{V}^{W}$ from $\calC W$ to subsets of $\calC V$. Note that in the Definition~\ref{defn:HHS}, there is no restriction on $\rho_{V}^{W}$.
If $W = \mathcal{R}_v$ then there is no $V$ such that $V$ is properly nested in $W$. Thus, we only need to consider the following cases:
 
{\textbf{Case~1}}: Assume that $W$ is the Bass-Serre tree $T$. In this case, we have that $\calC W = (T, d_T)$. Since $V$ is properly nested in $W$, it follows that $V \in \Lambda \backslash \{T\}$. Hence $\calC V$ is either the coned-off space $\bigl (\widehat{H}_v, d_{\widehat{H}_v} \bigr )$ or the thickened fiber line $\bigl (\mathcal{R}_v, d_{\mathcal{R}_v} \bigr ) $ for some vertex $v$ in $T$.  We thus define $\rho_{W}^{V}$ to be the vertex $v \in T = CW$. Obviously,  $\diam(\rho_{W}^{V}) = 0 < \xi$.

A projection map $\rho_{V}^{W} \colon \calC W = (T, d_T) \to 2^{\calC V}$ from $\calC W$ to subsets of $\calC V$ is defined as follows. Let $x$ be an arbitrary vertex in the Bass-Serre tree $T$. 
 If $d_{T}(x, v) \le 1$, then {\it we define $\rho^{W}_{V}(x)$ to be the entire space $\calC V$}.

    Now we assume that $d_{T}(x, v) \ge 2$. Let $e_1$ and $e_2$ be the last two consecutive oriented edges in the geodesic $[x,v]$ in $T$ such that $v = (e_{2})_{+}$. Let $u$ be the common vertex of $e_1$ and $e_2$, and let $\mathcal{S}_{e_1e_2}$ be the strip in the space $B_{u}$ (see Subsection~\ref{admissible}).
         Recall that $V$ is either $\widehat{H}_v$ or $\mathcal{R}_v$. When $V = \mathcal{R}_{v}$,  then we define $\rho^{W}_{V}(x)$ to be $\mathcal{S}_{e_1e_2} \cap F_{e_2} \subset \mathcal{R}_v$ . Let $r >0$ be the constant given by Remark~\ref{remark:useful}. Applying Remark~\ref{remark:useful} to $\ell: = \mathcal{S}_{e_1e_2} \cap F_{e_2}$, we have that the diameter of $\rho^{W}_{V}(x) = \ell$ in $\bigl ( \mathcal{R}_v, d_{\mathcal{R}_v} \bigr )$ is bounded above by $r$.   
When $ V = \widehat{H}_v$ then we { \it define $\rho^{W}_{V}(x)$ to be the apex-point} $c_{e_2}$ in $\widehat{H}_v$.

{\textbf{Case~2}}:   The last case we should consider is that $W = \widehat{H}_w$. Since $V$ is properly nested in $W$, it follows that $V = \mathcal{R}_{v}$ for some vertex $w$ where $v$ and $w$ are two adjacent vertices in $T$.
Let $e$ denote the edge $[w,v]$. We define $\rho^{V}_{W}$ to be the apex-point $c_{e}$ in $\widehat{H}_w$, and thus $\diam(\rho_{W}^{V}) = 0 < \xi$.  Now, we are going to define $\rho^{W}_{V} \colon \calC W \to 2^{\calC V} $ as follows. Let $x$ be a point in $\calC W  = \bigl ( \widehat{H}_w, d_{\widehat{H}_w} \bigr )$. If $x$ is the apex-point $c_e$ then we define $\rho^{W}_{V}(x)$ to be $\mathcal{R}_{v}$. Now we assume that $x$ is not the apex-point $c_e$. Let $\bar{x}$ the projection of $x$ to the subspace $H_v$ of $\widehat{H}_w$. Let $\mathcal{S}_{\bar{x}}$ be a strip in $B_{w}$ such that its projection into $H_w$ is a shortest path from $\bar{x}$ to the boundary line $\ell = F_{e} \cap H_w$. We define $\rho^{W}_{V}(x)$ to be the $\mathcal{S}_{\bar{x}} \cap F_{e} \subset \mathcal{R}_{v}$.
\end{proof}

 \subsubsection{Orthogonality}
 \label{orthogonality}
 We define orthogonality $\perp$ as the following:
 \begin{itemize}
     \item We declare $\mathcal{R}_{v} \perp \widehat{H}_v$ for all vertex $v$ in $T$.
     \item We declare $\mathcal{R}_{v} \perp \mathcal{R}_{w}$ whenever $v$ and $w$ are adjacent vertices.
 \end{itemize}
 
 \begin{lemma}
 $\bigl ( X, \Lambda, \perp \bigr )$ satisfies Orthogonality Axiom.
 \end{lemma}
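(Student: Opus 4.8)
The plan is to verify the three clauses of the Orthogonality Axiom in Definition~\ref{defn:HHS} directly from the two declared orthogonality relations, namely $\mathcal R_v \perp \widehat H_v$ and $\mathcal R_v \perp \mathcal R_w$ for $v,w$ adjacent. First I would check that $\perp$ is symmetric and anti-reflexive: symmetry is immediate since both defining relations are stated symmetrically, and anti-reflexivity holds because no element is declared orthogonal to itself (the relation $\mathcal R_v\perp\mathcal R_w$ only occurs for adjacent, hence distinct, vertices, and $\mathcal R_v \neq \widehat H_v$ as they are different types of spaces).

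Next I would verify the downward-closure condition: whenever $V \sqsubseteq W$ and $W \perp U$, then $V \perp U$. Recall from Section~\ref{nesting} that the only proper nestings among non-$T$ elements are $\mathcal R_w \sqsubseteq \widehat H_v$ for $v,w$ adjacent, and that $T$ is orthogonal to nothing. So the only case with $V \neq W$ to consider is $V = \mathcal R_w \sqsubseteq W = \widehat H_v$ with $W \perp U$; but $\widehat H_v$ is orthogonal only to $\mathcal R_v$, so $U = \mathcal R_v$, and then $\mathcal R_w \perp \mathcal R_v$ holds precisely because $v$ and $w$ are adjacent. The case $V = W$ is trivial.

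Then I would check the ``container'' condition: for each $S \in \Lambda$ and each $U \in \Lambda_S$ with $\{V \in \Lambda_S : V \perp U\} \neq \emptyset$, there is $W \in \Lambda_S$, $W \neq S$, containing all such $V$. Since every element of $\Lambda$ is nested in $T$, the only relevant choice is $S = T$, so $\Lambda_S = \Lambda$. I would split on the type of $U$: if $U = \mathcal R_v$, the elements orthogonal to it are $\widehat H_v$ together with $\mathcal R_w$ for $w$ adjacent to $v$; I would take $W = \widehat H_v$, which contains $\widehat H_v$ itself and contains each $\mathcal R_w$ with $w$ adjacent to $v$ by the nesting definition — here I should double-check that $\widehat H_v$ is nested in $T$ but not equal to $T$, which is clear. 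If $U = \widehat H_v$, the only element orthogonal to it is $\mathcal R_v$, so I can take $W = \widehat H_v$ again, or even simply note the singleton is contained in $\widehat H_v \sqsupseteq \mathcal R_v$. If $U = T$, the orthogonality set is empty and there is nothing to prove. Finally I would check the last clause, that $V \perp W$ implies $V, W$ are not $\sqsubseteq$-comparable: if $\mathcal R_v \perp \widehat H_v$ were comparable, the only possible nesting would be $\mathcal R_v \sqsubseteq \widehat H_v$, but that requires $v$ adjacent to $v$, impossible; and $\mathcal R_v \perp \mathcal R_w$ with $v,w$ adjacent are incomparable since neither $\mathcal R_v$ nor $\mathcal R_w$ is nested in the other (proper nesting of $\mathcal R$'s never occurs).

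The main obstacle here is essentially bookkeeping rather than mathematics: one must be careful that the ``container'' $W$ is genuinely not the maximal element $S = T$ and genuinely contains \emph{every} element orthogonal to $U$, and that the chosen relations interact correctly with the nesting poset defined in Section~\ref{nesting}. I expect the proof to be short, consisting of a case analysis over the (few) types of elements in $\Lambda$, with no estimates required.
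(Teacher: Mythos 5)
Your proposal follows the same overall case analysis as the paper, and most of it is correct, but there is a genuine error in the container clause for the case $U = \widehat H_v$. You claim that you may take $W = \widehat H_v$ and that ``$\widehat H_v \sqsupseteq \mathcal R_v$,'' but this is false under the nesting defined in Section~\ref{nesting}: $\mathcal R_w$ is properly nested in $\widehat H_v$ only when $w$ is \emph{adjacent} to $v$, so $\mathcal R_v \not\sqsubseteq \widehat H_v$. In fact you contradict yourself a few lines later, when you (correctly) argue that $\mathcal R_v \perp \widehat H_v$ forces them to be $\sqsubseteq$-incomparable precisely because ``$v$ adjacent to $v$'' is impossible. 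The paper instead takes $W = \mathcal R_v$ in this case, which works since $\mathcal R_v \sqsubseteq \mathcal R_v$ and $\mathcal R_v$ is the unique element orthogonal to $\widehat H_v$; note also $W = \mathcal R_v \neq T = S$, as required.

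A secondary, more minor gap: the axiom must be checked for \emph{every} $S \in \Lambda$, not only $S = T$. You dismiss $S \neq T$ as ``not relevant,'' but the right justification is the one the paper spells out — that for $S = \mathcal R_v$ one has $\Lambda_S = \{\mathcal R_v\}$ so the orthogonality set is empty, and for $S = \widehat H_v$ one checks directly that no $U \in \Lambda_S$ has a nonzero orthogonality set within $\Lambda_S$ (using that two distinct neighbors of $v$ in the tree are never adjacent to each other). Your downward-closure argument and the final non-comparability check are fine and match the paper, with the small improvement of making explicit that $W \perp U$ together with $W = \widehat H_v$ forces $U = \mathcal R_v$.
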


\begin{proof}
Let $V$, $W$, and $U$ be elements in the index set $\Lambda$ such that $V \sqsubseteq W$ and $W \perp U$. Note that $W \neq T$ since every element in $\Lambda$ is nested in $T$. If $V =W$ then it is obvious that $V \perp U$. We thus assume that $V$ is properly nested in $W$. 
It follows that $W = \widehat{H}_v$ for some vertex $v$ (as no element in $\Lambda$ is nested in $\mathcal{R}_v$, so $W$ could not be $\mathcal{R}_v$). Hence $V = \mathcal{R}_w$ for some adjacent vertex $w$ of $v$. Since $\mathcal{R}_{v} \perp \mathcal{R}_w$ by our definition above, it follows that $V \perp U$. Also, it is clear from our definition that if $V \perp W$, then $V, W$ are not $\sqsubseteq$--comparable. 

For the rest of the proof, we are going to verify the statement in Orthogonality Axiom (see Definition~\ref{defn:HHS}):  ``for each $S \in \Lambda$, for each $U \in \Lambda_{S}$ such that the set $\{ V \in \Lambda_{S} \, | \, V \perp U \} \neq \emptyset$ then there exists $W \in  \Lambda_{S}$, $W \neq S$ so that whenever $V \perp U$, $V \sqsubseteq S$ then we have $V \sqsubseteq W$''.

We observe that the element $S$ must be $T$. Indeed, suppose that $S \neq T$. In this case then $S$ is either $\widehat{H}_v$ or $\mathcal{R}_v$ for some vertex $v \in T$.
When  $S = \mathcal{R}_v$ then $\Lambda_S = \{ \mathcal{R}_v \}$. As $U$ and $V$ are in $\Lambda_S$, it follows that the set $\{ V \in \Lambda_{S} \, \bigl | \, V \perp U \} = \emptyset$.
Now we assume that $S = \widehat{H}_v$. As $U \in \Lambda_{S}$, we have that $U$ is either $\widehat{H}_v$ or  $U = \mathcal{R}_{w}$ for some vertex $w$ that is adjacent to $v$. If $U$ is $\widehat{H}_v$ then the set $\{ V \in \Lambda_{S}: V \perp U \} = \emptyset$. Now we consider $U = \mathcal{R}_{w}$. Since $V \in \Lambda_{S}$, it follows that $V = \widehat{H}_v$ or $V = \mathcal{R}_{a}$ where $a$ and $v$ are adjacent. Note that $\widehat{H}_v$ is not orthogonal to $U = \mathcal{R}_w$, and $V = \mathcal{R}_{a}$ is not orthogonal to $U = \mathcal{R}_w$ (since $a$ and $w$ are not adjacent vertices). In other words, the set $\{ V \in \Lambda_{S}: V \perp U \} = \emptyset$.


Therefore, the element $S = T$, and hence $\Lambda_S = \Lambda$. Since $U \in \Lambda_{S}$, it follows that $U$ is either $T$, or $\mathcal{R}_v$ or $\widehat{H}_v$ for some vertex $v \in T$. Since no element in $\Lambda$ is orthogonal to $T$, it follows that $U$ could not be $T$, otherwise the set $\{ V \in \Lambda_{S}: V \perp U \} = \emptyset$.
When 
$U =\mathcal{R}_{v}$ then we define $W$ to be  $\widehat{H}_v$. Note that whenever $V \perp U = \mathcal{R}_v$ and $V \sqsubseteq S = T$ then by  our definition of $\perp$, the element $V$ is either $\widehat{H}_v = W \sqsubseteq W$ or $V = \mathcal{R}_w$ for some vertex $w$ that is adjacent to $v$. By definition of $\sqsubseteq$ in Section~\ref{nesting}, the element $V$ is nested in $W$.
When $U = \widehat{H}_v$ then we choose $W$ to be the $\mathcal{R}_v$ in the index set $\Lambda$. It is obvious that if $V \perp U = \widehat{H}_v$ and $V \sqsubseteq S =T$ then $V$ must be $\mathcal{R}_v = W \sqsubseteq W$. 
\end{proof}

\subsubsection{Transversality and consistency}
\label{Transversality and consistency}
If $V$ and $W$ are not orthogonal and neither is nested in the other, then we say that $V$ and $W$ are {\it transverse}, denoted $
V \pitchfork W
$. We have the following possibilities for $V$ and $W$.

{\it Case~1:} $V = \mathcal{R}_v$ and $W = \mathcal{R}_w$ for some vertices $v$ and $w$ in $T$. We note that the distance between $v$ and $w$ in $T$ is at least $2$. Otherwise, $V \perp W$ if $v$ and $w$ are adjacent and $V =W$ if $v =w$.
    
{\it Case~2:}  $V = \mathcal{R}_{v}$ and $W = \widehat{H}_w$ for some vertices $v, w \in T$. Again, we note that the distance between $v$ and $w$ is at least $2$. Indeed, if $v$ and $w$ are adjacent vertices then $\mathcal{R}_{v}$ is nested in $\widehat{H}_w$ by our definition which contradicts to the fact $V$ and $W$ are transverse. If $v =w$ then $V =W$ that contradicts to the fact $V$ and $W$ are transverse.
    
{\it Case~3:} $V = \widehat{H}_v$ and $W = \widehat{H}_w$ for some distinct vertices $v, w \in T$.


Regarding Case~1:
 Let $e_{1} e_{2} \dots e_{k-1}e_{k}$ ($k \ge 2$) be the geodesic edge path in the Bass-Serre tree $T$ connecting $v$ to $w$ with $v = (e_{1})_{-}$ and $(e_{k})_{+} = w$. Let $F_{e_k}$ and $F_{e_1}$ be the boundary planes (see Subsection~\ref{admissible}) of the blocks $B_w$ and $B_v$ respectively. Let $\mathcal{S}_{e_{k-1}e_{k}}$ and $\mathcal{S}_{e_1e_2}$ be the given strips (see Subsection~\ref{admissible}) in the blocks $B_{(e_k)_{-}}$ and $B_{(e_1)_{+}}$ respectively.

 We define
$$P^V_W : = \mathcal{S}_{e_{k-1}e_{k}} \cap F_{e_k}$$ and 

$$P^{W}_{V} : = \mathcal{S}_{e_{1}e_{2}} \cap F_{e_1}$$

 \begin{lemma}
 $V, W, P^{V}_{W}, P^{W}_{V}$ defined in Case~1 above satisfying  \eqref{eqn:1}, in Definition~\ref{defn:HHS}.
 \end{lemma}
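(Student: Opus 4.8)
The goal is to verify the consistency inequality \eqref{eqn:1} in the case $V=\mathcal{R}_v$, $W=\mathcal{R}_w$ with $d_T(v,w)\ge 2$, for the subsets $P^V_W=\mathcal{S}_{e_{k-1}e_k}\cap F_{e_k}\subset\mathcal{R}_w$ and $P^W_V=\mathcal{S}_{e_1e_2}\cap F_{e_1}\subset\mathcal{R}_v$ defined above; that is, for every $x\in X$ we want
\[
\min\{\,d_{\mathcal{R}_w}(\pi_{\mathcal{R}_w}(x),P^V_W)\,,\,d_{\mathcal{R}_v}(\pi_{\mathcal{R}_v}(x),P^W_V)\,\}\le\kappa_0.
\]
The natural strategy is to split on where the index vertex $\pi(x)$ sits relative to the geodesic $e_1\cdots e_k$ in the tree $T$. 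Let $\gamma=[v,w]$ be that geodesic. For an arbitrary $x\in X$, consider the nearest-point projection $p$ of $\pi(x)$ onto $\gamma$ in the tree $T$. The key dichotomy is: either the geodesic $[\pi(x),w]$ enters $\gamma$ through the edge $e_k$ (equivalently $p\ne w$, i.e. $p$ lies on the $v$-side), or $[\pi(x),v]$ enters $\gamma$ through $e_1$ (equivalently $p\ne v$, the $w$-side); since $d_T(v,w)\ge 2$, at least one of these holds. I would treat the two cases symmetrically and show that in the first case $d_{\mathcal{R}_w}(\pi_{\mathcal{R}_w}(x),P^V_W)$ is uniformly bounded, and in the second $d_{\mathcal{R}_v}(\pi_{\mathcal{R}_v}(x),P^W_V)$ is.

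For the first case, unwind the definition of $\pi_{\mathcal{R}_w}$: since $[\pi(x),w]$ passes through $e_{k-1}e_k$, the projection $\pi_{\mathcal{R}_w}(x)$ is $\mathcal{S}_{e'e_k}\cap F_{e_k}$ where $e'$ is the penultimate edge of $[\pi(x),w]$; if $d_T(\pi(x),w)=1$ the projection is defined via the strip in $B_w$ toward $\ell_{e_k}$. In every sub-case, $\pi_{\mathcal{R}_w}(x)$ is a line in $F_{e_k}$ that meets the boundary line $\ell_{e_k}=F_{e_k}\cap H_w$, and likewise $P^V_W=\mathcal{S}_{e_{k-1}e_k}\cap F_{e_k}$ is such a line meeting $\ell_{e_k}$. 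By part (3) of Remark~\ref{remark:useful}, the distance in $\mathcal{R}_w$ between two such lines is, up to an additive error $2r$, the distance in $H_w$ between the two points where they cross $\ell_{e_k}$. So it suffices to bound $d_{H_w}$ between $\pi_{\ell_{e_k}}(\ell_{e'})$ and $\pi_{\ell_{e_k}}(\ell_{e_{k-1}})$ (up to bounded error), which in turn — by quasiconvexity of the relevant union $\ell\cup\gamma_{\cdot\cdot}\cup\ell'$ from Lemma~\ref{lemma:epsilon}, together with the tree structure — reduces to comparing the two penultimate edges $e'$ and $e_{k-1}$. These coincide precisely when $[\pi(x),w]$ and $[v,w]$ share the edge $e_{k-1}$, i.e. when $p\ne w$; and when they coincide the two lines $\pi_{\mathcal{R}_w}(x)$ and $P^V_W$ are literally equal, giving distance $0\le\kappa_0$. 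When $e'\ne e_{k-1}$ but $p\ne w$ still holds, $e'$ and $e_{k-1}$ are two distinct edges at the vertex $(e_k)_-$; here I would invoke Lemma~\ref{lem:K} (with $o=(e_k)_-$) to bound $\diam\,\pi_{\mathcal{R}_w}$ of the two relevant points by $A\,d_{\ell_{e_k}}(\ell_{e'},\ell_{e_{k-1}})+A$, and then control $d_{\ell_{e_k}}(\ell_{e'},\ell_{e_{k-1}})$ by a uniform constant using the bounded-intersection/almost-malnormality input (Lemma~\ref{numberlines} and the non-commensurability hypotheses in Definition~\ref{defn:admissible}), since distinct boundary lines in a fixed $H_o$ have uniformly bounded coarse intersection. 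The symmetric argument handles the $w$-side, bounding $d_{\mathcal{R}_v}(\pi_{\mathcal{R}_v}(x),P^W_V)$.

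The main obstacle I anticipate is the bookkeeping in the degenerate and near-degenerate sub-cases — $d_T(\pi(x),w)\le 1$, or $\pi(x)$ lying on $\gamma$ itself, or the penultimate edge of $[\pi(x),w]$ agreeing with $e_k$ rather than $e_{k-1}$ — since each forces a slightly different reading of the definition of $\pi_{\mathcal{R}_w}$ in Section~\ref{projectionaxiom}; one has to check that in all of them the projection lands within bounded $\mathcal{R}_w$-distance of $P^V_W$. The geometric heart, by contrast, is light: Remark~\ref{remark:useful}(3) converts everything to a computation in the single hyperbolic space $H_w$ (or $H_v$), where boundary lines have uniformly bounded pairwise coarse intersection, so the minimum in \eqref{eqn:1} is realized by whichever side "sees" $\pi(x)$ behind the corresponding entry edge. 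I would set $\kappa_0$ to be the maximum of the constants produced in these finitely many sub-cases (all uniform because there are finitely many vertex and edge orbits under $G$).
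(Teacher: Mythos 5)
Your case decomposition (by the position of the index vertex $\pi(x)$ relative to the geodesic $[v,w]$ in $T$), and your reduction via Remark~\ref{remark:useful}(3) to a computation among boundary lines in a single $H_o$, is the same skeleton the paper uses. But the step where the two sides are compared has a genuine gap that the paper's argument is specifically designed to avoid.

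Concretely, in the sub-case where $p$ is the nearest-point projection of $\pi(x)$ onto $[v,w]$ and $p=(e_k)_-$, you introduce three distinct boundary lines $\ell_{e'},\ell_{e_{k-1}},\ell_{e_k}\subset H_o$ (with $o=(e_k)_-$) and claim that $d_{\ell_{e_k}}(\ell_{e'},\ell_{e_{k-1}})$ is bounded by a uniform constant ``using bounded-intersection/almost-malnormality.'' This is false. Almost malnormality (Lemma~\ref{numberlines}) only gives bounded \emph{coarse intersection} of any two boundary lines, which bounds $\diam \pi_{\ell_{e_k}}(\ell_{e'})$ and $\diam \pi_{\ell_{e_k}}(\ell_{e_{k-1}})$ \emph{separately}; it does not bound the distance \emph{between} these two projection shadows along $\ell_{e_k}$. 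That quantity --- a ``subsurface projection distance'' for the relatively hyperbolic structure on $H_o$ --- is in general unbounded; if it were uniformly bounded, the domain $\ell_{e_k}$ would contribute nothing to the HHS distance formula for $H_o$. What \emph{is} true, and what the paper invokes, is the Behrstock/consistency inequality for the relatively hyperbolic HHS structure on $H_o$ (Lemma~\ref{numberlines}): for the three peripherals $\ell=\ell_{e'}$, $\ell_1=\ell_{e_1}$, $\ell_2=\ell_{e_2}$ at the middle vertex, one has
\[
\min\bigl\{\,d_{\ell_1}(\ell,\ell_2),\ d_{\ell_2}(\ell_1,\ell)\,\bigr\}\le\lambda,
\]
and via Remark~\ref{remark:useful}(3) this is exactly the bound on $\min\{d_W(\pi_W(x),P^V_W),\,d_V(\pi_V(x),P^W_V)\}$ demanded by \eqref{eqn:1}. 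In other words, one cannot bound \emph{either side individually}; \eqref{eqn:1} only asks for the minimum, and the consistency inequality for $H_o$ is precisely the tool that delivers the minimum.

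Two smaller remarks. First, the problematic sub-case only arises when $d_T(v,w)=2$ and $\pi(x)$ is at or branches off from the midpoint $u$; for $d_T(v,w)\ge 3$, when $p=(e_k)_-$ one has $d_T(p,v)\ge 2$ and so $\pi_V(x)=P^W_V$ on the nose, which you do not seem to register. Second, you would also need to handle the degenerate sub-case $\pi(x)=u$ with $x$ lying in the plane $F_{e_1}$ or $F_{e_2}$ itself (the paper treats this separately before invoking consistency), since there the ``third line'' $\ell_{e'}$ coincides with $\ell_{e_1}$ or $\ell_{e_2}$.
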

 
 \begin{proof}
 Let $r >0$ be the constant given by Remark~\ref{remark:useful}. We have that $\diam P^{V}_{W} \le r \le \xi$ and $\diam P^{W}_{V} \le r \le \xi$.
  As we have shown early, the distance between $v$ and $w$ is at least $2$.
Let $u = \pi(x) \in V(T)$ where $\pi \colon X \to T$ be the index map given by Remark~\ref{rem:indexfunction}.

Suppose that $u$ does not belong to the geodesic $[v, w]$. If $v$ lies between $u$ and $w$ then by the definition of $\pi_W$ (see Subsection~\ref{projectionaxiom}) and $P^V_W$, we see that $\pi_{W}(x)$ and $P^{V}_{W}$ are the same subset in $\calC W$.  Similarly, if $w$ lies in $[v,u]$ then $\pi_{V}(x)$ and $P^{W}_{V}$ are the same subset in $\calC V$. By Remark~\ref{remark:useful}, we have that \eqref{eqn:1} holds.

Now, we assume that $u$ belongs to $[v,w]$. Suppose that $d(u,v) \ge 2$ (resp.\, $d(u, w) \ge 2$) then we have that $\pi_{V}(x)$ and $P^{W}_{V}$ are the same subset in $\calC V$ (resp,\, $\pi_{W}(x)$ and $P^{V}_{W}$ are the same subset in $\calC W$). It follows from Remark~\ref{remark:useful} that \eqref{eqn:1} holds whenever $d(u,v) \ge 2$ or $d(u, w) \ge 2$.

The final case we should consider is that $d(v,w) =2$ and $u$ lies between $[v,w]$. Let $\mathcal{S}: = \mathcal{S}_{[v,u], [u,w]}$ be the strip in $B_u$. Since $x \in X = G(x_0)$, and $x_0$ is in the boundary plane $F_{e_0}$, it follows that $x$ must lie in a boundary plane of $B_u$. We denote this boundary plane by $F_{e'}$. If $e' = [v,u]$ (the case $e' = [u,w]$ is similar) then $\pi_{W}(x)$ and $\mathcal{S} \cap F_{[u,w]}$ are identical and their diameter is no more than $r$ in $\calC W$ by Remark~\ref{remark:useful}. Now we assume that the edges $e$, $[v,u]$, and $[u.w]$ are distinct.

We endow each $H_v$ with the HHS structure originating from the fact that $H_v$ is hyperbolic relative to the collection of boundary lines $\mathbb{L}_v$ (see Lemma~\ref{numberlines}). As a result, The Transversality Axiom and the Consistency Axiom is applied to this HHS structure. In particular,
\[
\min \bigr \{ \pi_{\ell_1}(\ell, \ell_2), \pi_{\ell_2}(\ell_1, \ell) \bigr \} \le \lambda
\] for any boundary line $\ell \in \mathbb{L}_u$.
It implies that \eqref{eqn:1} holds.
 \end{proof}

 Regarding Case 2:
Assume that $V = \mathcal{R}_{v}$ and $W = \widehat{H}_w$ for some vertices $v, w \in T$. 
Since $V$ and $W$ are transverse, it follows that $v \neq w$ (otherwise they are orthogonal).
Note that $d(v, w) \ge 2$. Indeed, if $v$ and $w$ are adjacent vertices then $\mathcal{R}_{v}$ is nested in $\widehat{H}_w$ by our definition which contradicts to the fact $V$ and $W$ are transverse. Now, let $e_{1} e_{2} \dots e_{k-1}e_{k}$ be the geodesic edge path in the Bass-Serre tree connecting $v$ to $w$ with $v = (e_{1})_{-}$ and $(e_{k})_{+} = w$. We define

$$P^{W}_{V} : = \mathcal{S}_{e_{1}e_{2}} \cap F_{e_1} \subset \mathcal{R}_{v}$$ and
$$P^V_W : =  c_{e_{k}}$$ the  apex-point in $\widehat{H}_w$. Hence, subsets $P^{W}_{V}$ and $P^{V}_{W}$ are bounded sets in $\mathcal{R}_v$ (see Remark~\ref{remark:useful}) and $\widehat{H}_w$ respectively. Therefore  $\diam(\rho^V_W) \le \xi$, and $\diam(P^{W}_{V}) \le \xi$.
 \begin{lemma}
 $V, W, P^{V}_{W}, P^{W}_{V}$ defined in Case~2 above satisfying  \eqref{eqn:1},  in Definition~\ref{defn:HHS}.
 \end{lemma}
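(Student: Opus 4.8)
The plan is to mimic the structure of the Case~1 lemma, exploiting the relatively hyperbolic HHS structure on the intermediate vertex space $H_u$ where $u = \pi(x)$. First I would record that $\diam(P^{V}_{W}) \le \xi$ and $\diam(P^{W}_{V}) \le \xi$ are already established in the paragraph preceding the statement, so the content is the inequality \eqref{eqn:1}: for every $x \in X$,
\[
\min \bigl\{ d_{\widehat{H}_w}(\pi_{W}(x), P^{V}_{W}),\; d_{\mathcal{R}_v}(\pi_{V}(x), P^{W}_{V}) \bigr\} \le \kappa_0.
\]

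The argument splits on the position of $u := \pi(x)$ relative to the geodesic $[v,w] = e_1 e_2 \cdots e_k$ in the Bass--Serre tree $T$. If $u$ does not lie on $[v,w]$, or if $u$ lies on $[v,w]$ but $d_T(u,v) \ge 2$, then the geodesic from $u$ to $w$ in $T$ passes through $e_1, e_2$ (and hence through the vertex $v$), so by the definition of $\pi_V$ (Subsection~\ref{projectionaxiom}) the set $\pi_V(x) = \mathcal{S}_{e_1 e_2} \cap F_{e_1}$ coincides with $P^{W}_{V}$; thus the second term of the minimum is $0$ up to the additive constant $r$ of Remark~\ref{remark:useful}, and we are done. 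Symmetrically, if $u$ lies on $[v,w]$ with $d_T(u,w) \ge 2$, then $\pi_W(x)$ — the nearest-point projection of $x$ into $H_w \subset \widehat{H}_w$ — is coarsely the apex point $c_{e_k}$ of the cone attached along the boundary line $\ell_{e_k} = F_{e_k} \cap H_w$: indeed the relevant strip $\mathcal{S}_{e_{k-1} e_k}$ forces the projection to land within bounded distance of $\ell_{e_k}$, whose diameter in $\widehat{H}_w$ is bounded once the cone is attached, so $d_{\widehat{H}_w}(\pi_W(x), c_{e_k}) = d_{\widehat{H}_w}(\pi_W(x), P^{V}_{W})$ is uniformly bounded. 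That leaves the single remaining case $d_T(v,w) = 2$ with $u$ the midpoint, i.e. $v, u, w$ all adjacent and $e := e_k = [w,u]$, $e_1 = [v,u]$. Here $x$ lies in some boundary plane $F_{e'}$ of $B_u$ (since $x \in G(x_0)$ and $x_0$ lies in a boundary plane). If $e' = e_1$ then $\pi_V(x) = P^{W}_V$ again; if $e' = e$ then $\pi_W(x)$ is within bounded distance of $\ell_e$, hence of the apex $c_e = P^{V}_W$. If $e', e_1, e$ are pairwise distinct, I invoke the relatively hyperbolic HHS structure on $H_u$ from Lemma~\ref{numberlines}: the boundary lines $\ell_{e'}, \ell_{e_1}, \ell_e \in \mathbb{L}_u$ satisfy the transversality/consistency inequality, so $\min\{ d_{\ell_{e_1}}(\ell_{e'}, \ell_e), d_{\ell_e}(\ell_{e'}, \ell_{e_1})\} \le \lambda$, and Lemma~\ref{lem:K} converts a bound on $d_{\ell_e}(\ell_{e'}, \ell_{e_1})$ into a bound on $\diam \pi_{\mathcal{R}_w}(\cdots)$ while a bound on $d_{\ell_{e_1}}(\cdots)$ controls the $\widehat{H}_w$-side via the apex; translating both alternatives back yields \eqref{eqn:1}.

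I expect the main obstacle to be the bookkeeping in the $d_T(v,w)=2$ subcase — specifically, pinning down precisely why the nearest-point projection $\pi_W(x)$ into $H_w$ lands uniformly close to the boundary line $\ell_{e_k}$ (equivalently, to the apex $P^{V}_W$) whenever $\pi(x) = u$ is separated from $w$ by the edge $e_k$. This uses $\epsilon$-quasiconvexity of the union $\ell \cup \gamma_{e,e'} \cup \ell'$ from Lemma~\ref{lemma:epsilon} together with convexity of blocks: any geodesic in $X$ from $x$ (sitting over $u$) to a point over $w$ must cross $F_{e_k}$, and its shadow in $H_w$ must pass near $\ell_{e_k}$, so the closest point of $H_w$ to $x$ is near $\ell_{e_k}$. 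Once that coarse identification $\pi_W(x) \eadd c_{e_k}$ (on the relevant side) is in hand, the rest is a direct transcription of the Case~1 proof, applying Remark~\ref{remark:useful}, Lemma~\ref{lem:K}, and the relative-hyperbolicity consistency inequality for $H_u$, so I would keep that portion brief.
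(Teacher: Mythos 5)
Your proof, modeled on the preceding Case~1 lemma, is substantially heavier than what the paper actually does for Case~2. The paper's argument makes no appeal to Lemma~\ref{numberlines}, Lemma~\ref{lem:K}, or any relative-hyperbolicity consistency inequality in $H_u$: because $W = \widehat{H}_w$ here, the set $P^V_W$ is the apex point $c_{e_k}$, and for every $x$ with $\pi(x) \neq w$ the nearest-point projection $\pi_W(x)$ into $H_w$ lands on the boundary line $\ell_{e_k} = F_{e_k} \cap H_w$, which lies within distance $r$ of the apex in $\widehat{H}_w$. Thus the first term of the minimum in \eqref{eqn:1} is at most $r$ whenever $\pi(x) \neq w$; and when $\pi(x) = w$, the geodesic $[\pi(x), v] = [w,v]$ forces $\pi_V(x) = P^W_V$ exactly, so the second term vanishes. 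The apex does all the work. The boundary-line consistency inequality in $H_u$ is precisely what Case~1 falls back on because there both $V$ and $W$ are of $\mathcal{R}$-type and there is no apex available; importing it into Case~2 is superfluous.

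There is also a genuine gap in your case split. The claim that ``if $u$ does not lie on $[v,w]$, or if $u \in [v,w]$ with $d_T(u,v) \ge 2$, then the geodesic from $u$ to $w$ passes through $e_1, e_2$ (and hence through $v$)'' is false in both alternatives: if $u \in [v,w]$ with $d_T(u,v) \geq 2$, then $[u,w]$ is a proper terminal subsegment of $[v,w]$ that never visits $v$; and if $u \notin [v,w]$, then $[u,w]$ visits $v$ only when $v$ is the median of $u,v,w$. Moreover, when that median sits at distance exactly $1$ from $v$ along $[v,w]$, the last two edges of $[u,v]$ are not the reverses of $e_2, e_1$, so $\pi_V(x) \neq P^W_V$ and the second term of the minimum is not controlled as you assert. (The inequality is still rescued there by the first term, since $\pi_W(x)$ stays on $\ell_{e_k}$, but your written argument does not make that switch.) A clean dichotomy is simply $\pi(x) = w$ versus $\pi(x) \neq w$, which is essentially what the paper's sub-cases amount to.
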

 \begin{proof}
Let $u = \pi(x) \in V(T)$ where $\pi \colon X \to T$ be the index map given by Remark~\ref{rem:indexfunction}. If $v$ lies between $u$ and $w$ then by the definition of $\pi_{W}$ in Subsection~\ref{projectionaxiom}, we have that $\pi_{W}(x)$ is a point in the boundary line $H_{w} 
\cap F_{e}$ where $e$ is the last edge in the geodesic $[v,w]$. Since $P^{V}_{W}$ is the apex-point $c_e$, it follows that the distance between $\pi_{W}(x)$ and $P^{V}_{W}$ in $\widehat{H}_w$ is bounded above by $r$. Thus \eqref{eqn:1} holds. If  $w$ lies in $[v, u]$, then similar arguments as above show that $\pi_{W}(x)$ and $P^{W}_{V}$ are the same subset in $\calC V$, and thus \eqref{eqn:1} holds.  

Now we assume that $u$ belongs to $[v, w]$. Let $e$ be the last edge in the geodesic path $[v,w]$. Then $P^{V}_{W}$ is the apex-point $c_e$ in $\widehat{H}_w$ and $\pi_{W}(x)$ is a point in the boundary line $H_{w} \cap F_{e}$. Since the distance between $\pi_{W}(x)$ and $c_e$ in $\widehat{H}_w$ is bounded above by $r$. It follows that \eqref{eqn:1} holds.
 \end{proof}

Regarding Case~3: Finally, suppose that $V = \widehat{H}_v$ and $W = \widehat{H}_w$ for some vertices $v, w \in T$.
Since $V$ and $W$ are transverse so $v \neq w$. 
Let $e_{1}\dots e_{k-1}e_{k}$ be the geodesic edge path in the Bass-Serre tree connecting $v$ to $w$ with $v = (e_{1})_{-}$ and $(e_{k})_{+} = w$. We then define $ P^V_W : =  c_{e_{k}}$ and $P^{W}_{V} : =  c_{e_{1}}$ to be apex-points in $\widehat{H}_w$ and $\widehat{H}_v$ respectively. Therefore  $\diam(\rho^V_W) \le \xi$, $\diam(P^{W}_{V}) \le \xi$.

  \begin{lemma}
 $V, W, P^{V}_{W}, P^{W}_{V}$ defined in Case~3 above satisfying  \eqref{eqn:1} in Definition~\ref{defn:HHS}.
 \end{lemma}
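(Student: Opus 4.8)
\emph{Strategy.} The plan is to reproduce the pattern of Cases~1 and~2: use the index map $\pi\colon X\to V(T)$ of Remark~\ref{rem:indexfunction} to locate the block $B_{\pi(x)}$ containing $x$, then split into cases according to the position of $\pi(x)$ relative to the geodesic $[v,w]=e_1\cdots e_k$ in $T$. Recall that $\pi_{W}=\pi_{\widehat H_w}$ is the nearest-point projection onto $H_w\subset\widehat H_w$, that $P^V_W=c_{e_k}$ is the apex in $\widehat H_w$ of the cone over $\ell_{e_k}=F_{e_k}\cap H_w$, and that $P^W_V=c_{e_1}$ is the apex in $\widehat H_v$ of the cone over $\ell_{e_1}=F_{e_1}\cap H_v$. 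The argument rests on two geometric facts. First, for an edge $e$ incident to a vertex $v$ the boundary plane splits as $F_e=\ell_e\times\mathbb R\subset H_v\times\mathbb R=B_v$ with $\ell_e=F_e\cap H_v$, so the CAT(0) nearest-point projection of $F_e$ onto the convex subset $H_v$ is exactly the boundary line $\ell_e$. Second, if $u\ne w$ and the $T$-geodesic $[u,w]$ terminates in the edge $f$ (so that $f_+=w$), then the CAT(0) geodesic in $X$ from any point of $B_u$ to $H_w$ crosses the plane $F_f$ before entering $H_w$; by convexity of $B_w$ this forces $\pi_{H_w}(x)$ to lie within a uniform distance of $\ell_f$ (the additive error absorbing the passage to the orbit $X=G(x_0)$). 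Since every point of $\ell_f$ lies within the cone radius $r$ of the apex $c_f$ in $\widehat H_w$, these facts combine to give a uniform bound on $d_{\widehat H_w}\bigl(\pi_{W}(x),c_f\bigr)$, and likewise for $V$.

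Granting this, I would let $u=\pi(x)$ and delete the edge $e_k$ from $T$, writing $T_w$ for the component containing $w$ and $T_v$ for the component containing $v$ (hence $e_1,\dots,e_{k-1}$). As $v\ne w$, exactly one of $u\in T_v$, $u\in T_w$ holds. If $u\in T_v$ then $u\ne w$ and $[u,w]$ ends in $e_k$, so the two facts above give $d_W\bigl(\pi_W(x),P^V_W\bigr)=d_{\widehat H_w}\bigl(\pi_{W}(x),c_{e_k}\bigr)\le\kappa_0$, so the first term of \eqref{eqn:1} is small. If $u\in T_w$ then $u\ne v$, and since $T$ is a tree the geodesic $[u,v]$ crosses $e_k$ and then follows $e_{k-1}\cdots e_1$ to $v$, hence terminates in $e_1$; the same reasoning gives $d_V\bigl(\pi_V(x),P^W_V\bigr)=d_{\widehat H_v}\bigl(\pi_{V}(x),c_{e_1}\bigr)\le\kappa_0$, so the second term is small. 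Since it suffices that one of the two terms in \eqref{eqn:1} be bounded, this establishes the estimate for every $x\in X$, provided the global constant $\kappa_0$ is chosen to dominate the uniform bounds produced here (and those of Cases~1 and~2).

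The one genuinely delicate point is the second geometric fact: that for $x$ in a block $B_u$ separated from $B_w$ by $e_k$, the CAT(0) projection of $x$ onto $H_w$ is coarsely confined to the single boundary line $\ell_{e_k}$ rather than spreading over $H_w$. This uses the tree-of-spaces structure of $X$, the convexity of the blocks $B_v$, and the fact that each boundary plane is a ``vertical'' plane $\ell_e\times\mathbb R$; it is part of the Croke--Kleiner description of $X$ (cf.\ Section~3 of \cite{CK02}) and is the same input that underlies the earlier cases. Everything else---the tree combinatorics distinguishing $T_v$ from $T_w$, and the bound $d_{\widehat H_w}(\ell_e,c_e)\le r$---is elementary, and all constants are uniform since there are only finitely many blocks up to the $G$-action.
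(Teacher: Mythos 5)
Your argument is correct, and it supplies the content that the paper explicitly defers (``the proof is similar as the previous paragraphs, and we leave it to the reader''). Your binary dichotomy --- delete $e_k$ and ask whether $u=\pi(x)$ lies in the component $T_v$ or in $T_w$ --- is a mild streamlining of the median trichotomy used in the paper's Cases~1 and~2 (``$v\in[u,w]$'', ``$w\in[v,u]$'', ``$u\in[v,w]$''); both decompositions amount to choosing which of the two terms in \eqref{eqn:1} to bound, and yours makes that choice in one step. The geometric input you isolate --- that for $u\ne w$ with $[u,w]$ ending in $f$, the nearest-point projection of $B_u$ onto $H_w$ is coarsely confined to $\ell_f$, and that the apex $c_f$ is within cone radius $r$ of $\ell_f$ in $\widehat H_w$ --- is exactly what the paper uses implicitly in the analogous subcases of Case~2 (e.g.\ the assertion that ``$\pi_W(x)$ is a point in the boundary line $H_w\cap F_e$''), and it follows from the product structure $B_w=H_w\times\mathbb R$, the fact that $F_f=\ell_f\times\mathbb R$, and the tree-of-spaces separation property in the Croke--Kleiner setting. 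Your remark about uniformity of constants via cofiniteness of the $G$-action is the right justification, and one could add that when $u\in T_w$ the geodesic $[u,v]$ passes through $w$ and then runs $\bar e_{k}\cdots\bar e_1$, which is why the last edge into $v$ is always $\bar e_1$ (identified with $e_1$ as an unoriented edge, so that $c_{e_1}$ is the relevant apex). In short: the proof is sound and matches the approach the paper gestures at.
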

 
 \begin{proof}
 $V = \widehat{H}_v$ and $W = \widehat{H}_w$ where $d(v, w) \ge 2$. The proof is similar as the previous paragraphs, and we leave it to the reader.
\end{proof}

\begin{lemma}
$V, W, P^{V}_{W}, P^{W}_{V}$  satisfy \eqref{eqn:2} in Definition~\ref{defn:HHS}
\end{lemma}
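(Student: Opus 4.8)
The plan is to run through the cases of the nesting relation $\sqsubseteq$ defined in Section~\ref{nesting}. The diagonal case $V=W$ is vacuous, so it suffices to treat $V$ properly nested in $W$, and there are exactly three configurations: (i) $W=T$ and $V=\widehat{H}_v$; (ii) $W=T$ and $V=\mathcal{R}_v$; (iii) $W=\widehat{H}_w$ and $V=\mathcal{R}_v$ with $v,w$ adjacent, $e:=[w,v]$. In every configuration I would establish a dichotomy: \emph{either} the first term $d_W(\pi_W(x),P^V_W)$ of \eqref{eqn:2} is uniformly bounded, \emph{or} $\rho^W_V(\pi_W(x))$ coarsely coincides with $\pi_V(x)$, in which case the second term $\diam(\pi_V(x)\cup\rho^W_V(\pi_W(x)))$ is bounded using the diameter estimates of Remark~\ref{remark:useful} and the Projection Axiom. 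Choosing $\kappa_0$ larger than all the constants appearing then yields \eqref{eqn:2}.

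In configurations (i) and (ii) we have $d_W(\pi_W(x),P^V_W)=d_T(\pi(x),v)$. If $d_T(\pi(x),v)\le 1$ this is $\le\kappa_0$ and we are done. If $d_T(\pi(x),v)\ge 2$, I would note that by construction $\rho^W_V(\pi(x))$ is defined from the last two edges of the $T$-geodesic $[\pi(x),v]$ — precisely the data defining $\pi_V(x)$. Writing this geodesic as $e_1\cdots e_k$ with $(e_k)_+=v$, in case (ii) this gives $\rho^{T}_{\mathcal{R}_v}(\pi(x))=\mathcal{S}_{e_{k-1}e_k}\cap F_{e_k}=\pi_{\mathcal{R}_v}(x)$ on the nose, so the second term equals $\diam_{\mathcal{R}_v}(\pi_{\mathcal{R}_v}(x))\le r$ by Remark~\ref{remark:useful}. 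In case (i), $\rho^{T}_{\widehat{H}_v}(\pi(x))$ is the apex $c_{e_k}$, while $\pi_{\widehat{H}_v}(x)$ is the nearest-point projection of $x$ onto $H_v$; since $\pi(x)$ lies at $T$-distance $\ge 2$ from $v$, the plane $F_{e_k}$ coarsely separates $x$ from $B_v$, so this projection lies within uniformly bounded $H_v$-distance of $\ell_{e_k}=F_{e_k}\cap H_v$, and every point of $\ell_{e_k}$ is within distance $r$ of $c_{e_k}$ in $\widehat{H}_v$; thus the second term is uniformly bounded.

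Configuration (iii) is where the work lies. Here $d_W(\pi_W(x),P^V_W)=d_{\widehat{H}_w}(\pi_{\widehat{H}_w}(x),c_e)$, and I would split according to the position of $\pi(x)$ in $T$. If the $T$-geodesic $[\pi(x),w]$ enters $w$ through the edge $[v,w]$ (equivalently $\pi(x)$ is on the $v$-side of $w$, including $\pi(x)=v$), then $\pi_{\widehat{H}_w}(x)$ lies within bounded distance of $\ell:=F_e\cap H_w$, hence within a uniform distance of $c_e$ in $\widehat{H}_w$, and the first term suffices. Otherwise the $T$-geodesic $[\pi(x),v]$ passes through $w$; letting $e'$ be its last edge into $w$, the definition of $\pi_{\mathcal{R}_v}$ (its ``$d_T\ge 2$''-clause, or its ``adjacent''-clause when $\pi(x)=w$) gives $\pi_{\mathcal{R}_v}(x)=\mathcal{S}_{e'e}\cap F_e$, while $\pi_{\widehat{H}_w}(x)=:y$ lies within bounded distance of $\ell_{e'}=F_{e'}\cap H_w$. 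Feeding $y$ into $\rho^{\widehat{H}_w}_{\mathcal{R}_v}$ produces the strip $\mathcal{S}_{\bar y}$ over a shortest $H_w$-path from $y$ to $\ell$; since $y$ is close to $\ell_{e'}$ and nearest-point projection onto the $\epsilon$-quasiconvex set $\ell$ (Lemma~\ref{lemma:epsilon}) is coarsely Lipschitz in the $\delta$-hyperbolic space $H_w$, the feet of $\mathcal{S}_{\bar y}$ and of $\mathcal{S}_{e'e}$ on $\ell$ are a uniformly bounded $H_w$-distance apart; Remark~\ref{remark:useful}(3) then turns this into a uniform bound on $\diam_{\mathcal{R}_v}\big(\pi_{\mathcal{R}_v}(x)\cup\rho^{\widehat{H}_w}_{\mathcal{R}_v}(y)\big)$, i.e. the second term.

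\textbf{The main obstacle.} The delicate point — in configuration (iii), and implicitly in (i) — is the assertion that the nearest-point projection of $x\in X$ onto $H_w$ lands uniformly close to the boundary line of the edge through which the $T$-geodesic from $\pi(x)$ reaches $w$. This is a standard separation property of Croke–Kleiner blocks (the boundary plane $F_{e'}$ coarsely separates $B_w$ from the half-tree beyond $e'$), but it has to be invoked with care and then combined with the coarse-Lipschitz projection estimate in $H_w$ and with Remark~\ref{remark:useful}, so that all constants remain uniform over the finitely many $G$-orbits of vertices; packaging these estimates is the bulk of the argument, the rest being bookkeeping of the definitions in Sections~\ref{projectionaxiom} and~\ref{nesting}.
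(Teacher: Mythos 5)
Your proposal is correct and follows essentially the same case decomposition and the same key estimates as the paper's own proof: both split on $W=T$ versus $W=\widehat{H}_w$, bound the first term of \eqref{eqn:2} when $\pi(x)$ is close to (or on the far side of) $v$ in $T$, and otherwise bound the second term by observing that $\rho^W_V(\pi_W(x))$ coarsely agrees with $\pi_V(x)$ via Remark~\ref{remark:useful} and the quasiconvexity estimate of Lemma~\ref{lemma:epsilon}. The one place you are actually more explicit than the paper is the separation argument showing the nearest-point projection to $H_w$ lands near the relevant boundary line, which the paper simply asserts.
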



\begin{proof}
Let $u = \pi(x)$ where $\pi \colon X \to T$ be the index map given by Remark~\ref{rem:indexfunction}.

Case~1: Suppose that $W = T$.  Since $V$ is nested in $W$, it follows that $V = \mathcal{R}_{v}$ or $V = \widehat{H}_v$ for some vertex $v$ in $T$.
We note that if $d(u,v) \le 2$ then $$d_{W}(\pi_{W}(x), \rho^{V}_{W}) = d_{T}(\pi_{T}(x), \rho^{V}_{T}) =  d(u, v) \le 2$$ and thus \eqref{eqn:2} holds. 

Now we assume that $d(u,v) >2$.    Let $e$ and $e'$ be two consecutive edges in the geodesic $[u,v]$ such that $(e')_{+} = v$, $(e)_{+} = (e')_{-}$. 
 If $V = \mathcal{R}_v$ then by our definition of $\pi_V$ (see Subsection~\ref{projectionaxiom}), we have $\pi_{V}(x) = \mathcal{S}_{e,e'} \cap F_{e'}$ is a bounded subset in $\calC V$ by Remark~\ref{remark:useful},  and $$\rho_{V}^{W}(\pi_{W}(x)) = \rho_{V}^{T} (\pi(x)) =  \rho_{V}^{T}(u) = \mathcal{S}_{e,e'} \cap F_{e'}$$ which is a bounded subset in $\calC V$ by Remark~\ref{remark:useful}; thus \eqref{eqn:2} holds. 
If $V = \widehat{H}_v$ then $\pi_{V}(x)$ is a point in the boundary line $H_{v} \cap F_{e'}$ and $\rho^{W}_{V}(\pi_{W}(x))$ is the apex-point $c_{e'}$ in $ \widehat{H}_v$. Since the distance between $\pi_{V}(x)$ and $c_{e'}$ is bounded above by $r$ in $\widehat{H}_v$, it follows that \eqref{eqn:2} holds.

Case~2:  $W = \widehat{H}_w$ for some vertex $w$ in $T$. 
Since $V$ is nested in $W$, it follows that $V = \mathcal{R}_{v}$ for some vertex $v$ that is adjacent to $w$. We note that $P^{V}_{W}$ is the apex-point $c_{e}$ in $\widehat{H}_w$ where $e = [v,w]$.

When $v \in [u, w]$ then $\pi_{W}(x)$ is a point in the boundary line $H_{w} \cap F_{e}$ of $H_w$. In $\widehat{H}_w$, this point and the apex-point $c_e$ have a distance that is bounded above by $r$, thus \eqref{eqn:2} holds.

When $w \in [v, u)$, let $e'$ be the edge in the geodesic $[u, w]$ with $(e')_{+} = w$. We have 
$\pi_{V}(x) = \pi_{\mathcal{R}_{v}}(x) = \mathcal{S}_{e',e} \cap F_{e} \subset \mathcal{R}_v$.
By the definition of $\pi_{W}$, we have that $\pi_{W}(x)$ is a point in a boundary line $\ell$ of $H_w$. Note that this line is a subset of $F_{e'}$.  By Lemma~\ref{lemma:epsilon}, $\rho^{W}_{V}(\pi_{W}(x))$ and $\mathcal{S}_{e', e} \cap F_{e}$ are within an uniform finite Hausdorff distance in the plane $F_{e}$, and thus their distance is uniformly bounded with respect to the metric in $\mathcal{R}_v$. Thus \eqref{eqn:2} holds.
For the case $u =w$, then since $x$ lies in the orbit $G(x_0)$, so $x \in F_{f}$ for some edge $f$ with $(f)_{+} =v$. The proof in this case is identical as the previous case.
\end{proof}

So far, we have the notation $\rho^{W}_{V}$ (a map from $\calC W$ to subsets of $\calC V$) whenever $V$ is properly nested in $W$, and the notation $P^{W}_{V}$ (a bounded subset of $\calC V$) whenever $V  \pitchfork W$. 
For simplicity, we will write $\mathcal{P}^{W}_{V}$ to mean either $\rho^{W}_{V}$ or $P^{W}_{V}$ when it is clear from the context.

\begin{lemma}
$V, W, \mathcal{P}^{W}_{V}, \mathcal{P}^{V}_{W}$  satisfy \eqref{eqn:3} in Definition~\ref{defn:HHS}.
\end{lemma}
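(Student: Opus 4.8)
The plan is to verify axiom \eqref{eqn:3}, which states that if $U \sqsubseteq V$ and both are properly nested in (or transverse to) $W$, then $d_W(\mathcal{P}^U_W, \mathcal{P}^V_W) \le \kappa_0$. First I would reduce the number of cases by recalling the structure of the partial order $\sqsubseteq$: the only proper nestings are $\mathcal{R}_v \sqsubseteq T$, $\widehat{H}_v \sqsubseteq T$, and $\mathcal{R}_w \sqsubseteq \widehat{H}_v$ for $w$ adjacent to $v$. So a chain $U \sqsubseteq V$ with both properly nested in a common $W$ forces either $W = T$ with $U = \mathcal{R}_w \sqsubseteq V = \widehat{H}_w$, or $W = \widehat{H}_v$ with $U = V = \mathcal{R}_w$ (which is trivial). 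The genuinely substantive case is $W = T$, $U = \mathcal{R}_w$, $V = \widehat{H}_w$: here both $\mathcal{P}^U_T$ and $\mathcal{P}^V_T$ are \emph{defined to be the same vertex} $w \in T = \calC T$, so $d_T(\mathcal{P}^U_T, \mathcal{P}^V_T) = 0 \le \kappa_0$ immediately.

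Next I would handle the mixed transversal/nested situations that \eqref{eqn:3} also covers, namely where $U \sqsubseteq V$, $V \pitchfork W$ (or $V$ properly nested in $W$), and correspondingly $U \pitchfork W$. Again the partial order limits this: if $U = \mathcal{R}_w$ and $V = \widehat{H}_w$ with $\widehat{H}_w \pitchfork W = \widehat{H}_{w'}$ or $W = \mathcal{R}_{w'}$, then $P^U_W$ and $P^V_W$ are both defined (in Cases~1--3 of the Transversality subsection) in terms of the strip $\mathcal{S}_{e_{k-1} e_k} \cap F_{e_k}$ or the apex-point $c_{e_k}$ where $e_1 \cdots e_k$ is the geodesic in $T$ from $w$ to $w'$ (or to the vertex underlying $W$) — and crucially this geodesic is the \emph{same} for $U = \mathcal{R}_w$ and $V = \widehat{H}_w$ since both sit over the vertex $w$. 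So $P^U_W$ and $P^V_W$ are literally the same bounded subset of $\calC W$, giving distance $0$. The case where $U = \mathcal{R}_w \sqsubseteq V = T$ cannot occur in \eqref{eqn:3} because $T$ is $\sqsubseteq$-maximal, so $T$ is never properly nested in or transverse to any $W$; this rules out the apparent third possibility.

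The main point to get right — and the only place where anything beyond unwinding definitions is needed — is the observation that \textbf{all the relative-projection data $\mathcal{P}^{\bullet}_W$ attached to the two spaces $\mathcal{R}_w$ and $\widehat{H}_w$ that sit over a common vertex $w$ depend only on $w$ and on $W$, not on which of the two spaces we chose.} This is because in every case the construction routes through the unique geodesic in the Bass--Serre tree from $w$ to the vertex supporting $W$, and then through the canonically chosen strip $\mathcal{S}_{e_{k-1}e_k}$, boundary plane $F_{e_k}$, and apex $c_{e_k}$. Once this is noted, every instance of \eqref{eqn:3} reduces to ``distance from a set to itself,'' hence is bounded by $0 \le \kappa_0$. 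I expect the only mild obstacle to be bookkeeping: enumerating the legal $(U,V,W)$ triples under $\sqsubseteq$ and $\pitchfork$ carefully enough to be sure no nontrivial configuration has been missed, in particular checking that the hypothesis ``$W$ satisfies either $V$ is properly nested in $W$'' (and implicitly the transversal analogue) is compatible only with the configurations described above.

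\begin{proof}
By the definition of $\sqsubseteq$ in Section~\ref{nesting}, if $U \sqsubseteq V$ with $U \neq V$ then either $U$ properly nested in $V = T$, or $U = \mathcal{R}_w$ and $V = \widehat{H}_w$ for some vertex $w$. In \eqref{eqn:3} we moreover require that $V$ is properly nested in $W$ (or $V \pitchfork W$). Since $T$ is $\sqsubseteq$--maximal and is never transverse to any element, the case $V = T$ cannot occur. Hence we may assume $U = \mathcal{R}_w$ and $V = \widehat{H}_w$ for a common vertex $w$, with $W \in \Lambda$ such that $\widehat{H}_w$ is properly nested in $W$ or $\widehat{H}_w \pitchfork W$.

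If $W = T$, then by the construction in Case~1 of Section~\ref{nesting} we have $\mathcal{P}^{U}_{W} = \rho^{U}_{T} = w$ and $\mathcal{P}^{V}_{W} = \rho^{V}_{T} = w$ as vertices of $\calC T = T$. Hence $d_{T}(\mathcal{P}^{U}_{W}, \mathcal{P}^{V}_{W}) = 0 \le \kappa_0$.

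Otherwise $W = \widehat{H}_{w'}$ or $W = \mathcal{R}_{w'}$ for some vertex $w' \neq w$ with $d_T(w, w') \ge 2$, and $W$ is transverse to both $U = \mathcal{R}_w$ and $V = \widehat{H}_w$ (it cannot be properly nested over them, as nothing is properly nested in $\mathcal{R}_w$ and the only thing properly nested in $\widehat{H}_w$ is $\mathcal{R}_{w}$ itself). Let $e_1 \cdots e_k$ be the geodesic edge path in $T$ from $w$ to the vertex supporting $W$, with $w = (e_1)_{-}$. Inspecting the definitions of $P^{U}_{W}$ and $P^{V}_{W}$ in Cases~1--3 of Section~\ref{Transversality and consistency}, each is built solely from this edge path and the canonically chosen data $\mathcal{S}_{e_{k-1}e_k}$, $F_{e_k}$, $c_{e_k}$ over the vertex supporting $W$: concretely $P^{U}_{W} = P^{V}_{W}$ equals $\mathcal{S}_{e_{k-1}e_k} \cap F_{e_k}$ when $W = \mathcal{R}_{w'}$, and equals the apex-point $c_{e_k}$ when $W = \widehat{H}_{w'}$. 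In either case $P^{U}_{W}$ and $P^{V}_{W}$ are the same subset of $\calC W$, so $d_{W}(\mathcal{P}^{U}_{W}, \mathcal{P}^{V}_{W}) = 0 \le \kappa_0$. This establishes \eqref{eqn:3}.
\end{proof}
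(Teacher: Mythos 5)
Your proof has a substantive error: you have misread the nesting relation, and this misreading underlies both cases of your argument. In Section~\ref{nesting} the paper defines $\mathcal{R}_w$ to be properly nested in $\widehat{H}_v$ precisely when $v$ and $w$ are \emph{adjacent} (hence distinct) vertices; and in Section~\ref{orthogonality} the paper declares $\mathcal{R}_v \perp \widehat{H}_v$ for the \emph{same} vertex $v$. You write ``$U = \mathcal{R}_w$ and $V = \widehat{H}_w$ for a common vertex $w$,'' but $\mathcal{R}_w$ and $\widehat{H}_w$ are orthogonal, not nested. The correct picture, and the one used in the paper's proof, is $U = \mathcal{R}_w$, $V = \widehat{H}_v$ with $w$ and $v$ adjacent.

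This propagates into both branches of your argument. For $W = T$ you compute $d_T(\rho^U_T, \rho^V_T) = d_T(w,w) = 0$; the paper correctly computes $\rho^U_T = w$, $\rho^V_T = v$ and $d_T(w,v) = 1$ (still $\le \kappa_0$, but the arithmetic reflects the misidentification). For $W = \mathcal{R}_{w'}$ or $\widehat{H}_{w'}$ your key claim is that $P^U_W$ and $P^V_W$ are ``literally the same bounded subset'' because ``both sit over the vertex $w$,'' so the geodesic in $T$ is literally shared. That justification is false, since $U$ and $V$ sit over \emph{adjacent} vertices $w \neq v$. The equality of projections is still true, but for the different reason that the geodesics $[w, w']$ and $[v, w']$ in $T$ agree on their final edges (their lengths differ by one and one is a suffix of the other), and the relative projections are built only out of those final edges. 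The paper's proof makes exactly this argument: it introduces the geodesic $\gamma$ containing the edge $[v,w]$ with endpoint $u$ and observes that both projections are determined by the last one or two edges of $\gamma$. So while your conclusion is reachable, the argument as written rests on a wrong premise and needs to be repaired by replacing ``same vertex'' with ``adjacent vertices'' and re-justifying the coincidence of projections via the shared terminal edges of the two tree geodesics. The paper's treatment also explicitly dispenses with the cases $V = T$ and $V = \mathcal{R}_v$ as vacuous, which you handle equivalently.
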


\begin{proof}

 We will assume that $U$ is properly nested in $V$ since if $U =V$ then there is nothing to show.

Assume that $V = T$, then $U$ is either $\mathcal{R}_v$ or $\widehat{H}_{v}$ for some vertex $v \in T$. The set of $W \in \Lambda$ satisfies either $V$ is properly nested in $W$ or $
V \pitchfork W
$ and $W$ is not orthogonal to $U$ is empty. The claim is vacuously true. Assume that $V = \mathcal{R}_v$ for some vertex $v$, then the set of $U \in \Lambda$ that is properly nested in $V$ is empty.  The claim is vacuously true. Assume that $V =  \widehat{H}_{v}$ for some vertex $v$, then since $U$ is properly nested in $V$, it follows that $U = \mathcal{R}_{w}$ for some vertex $w$ that is adjacent to $v$. 

When $V$ is properly nested in $W$ then $W = T$. In this case, we note that $\rho^{U}_{W} = \rho^{\mathcal{R}_{w}}_{T} = w$, and $\rho^{V}_{W} = v$, thus $d_{W}(\rho^{U}_{W}, \rho^{V}_{W}) = d_{T}(w, v) = 1 \le \kappa_0$. Hence, \eqref{eqn:3} holds. When $ \widehat{H}_v = 
V \pitchfork W
$ and $W$ is not orthogonal to $U = \mathcal{R}_w$. Then there are two cases we should consider:

Case~1: $W = \mathcal{R}_{u}$ with $u \neq v, w$, $d(u, w) \ge 2$. Note that $d(u,v) \ge 2$ (otherwise $d(u,v) =1$ then $\mathcal{R}_{u} \sqsubseteq \widehat{H}_v$, so $W \sqsubseteq V$, a contradiction). In this case, either $v \in [u,w]$ or $w \in [v,u]$. Let $\gamma$ be the geodesic in $T$ containing $[v,w]$ with the endpoint $u$. Let $e, e'$ be the two edges in $\gamma$ such that $(e)_{+} = (e')_{-}$ and $(e')_{+} =u$. We have $d_{W}(P^{U}_{W}, P^V_W) \le \kappa_{0}$ since both $P^{U}_{W}$ and $P^{V}_{W}$ are $\mathcal{S}_{e, e'} \cap \mathfrak{f}_{e'}$ that is a bounded subset in $\calC W$. Hence \eqref{eqn:3} holds.

Case~2: $W = \widehat{H}_u$ where $u \neq v, w$ and $d(u, w) \ge 2$. Let $\gamma$ be the geodesic in $T$ containing $[v,w]$ with the endpoint $u$. Let $e$ be the edge in $\gamma$ with $(e)_{+} = u$. Then $d_{W}(P^{U}_{W}, \rho^V_W) \le \kappa_{0}$ since $P^{U}_{W}$ and $P^{V}_{W}$ both are the apex-point $c_{e}$ and the lemma is proven.
\end{proof}

\subsubsection{ Finite complexity}
It follows from the construction that the complexity of $X$ is $3$. Indeed,  we have that $V \sqsubseteq W \sqsubseteq T$ whenever $V = \mathcal{R}_v$, $W = \widehat{H}_w$ for some adjacent vertices $v$ and $w$ in $T$.

\subsubsection{Large Links}
\begin{lemma}
\label{lem:largelink}
$(X, \Lambda)$ satisfies the Large Links Axiom.
\end{lemma}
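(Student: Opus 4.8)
The plan is to verify the Large Links Axiom for each possible choice of the domain $W \in \Lambda$, splitting into the cases $W = T$, $W = \widehat H_v$, and $W = \mathcal R_v$, since the structure of $\Lambda_W \setminus \{W\}$ and the relevant projection quantities are quite different in each. First I would recall that the Large Links Axiom requires, for $x, x' \in X$ and $N = \lambda d_W(\pi_W(x), \pi_W(x')) + \lambda$, a list $A_1, \dots, A_{\lfloor N\rfloor}$ of elements properly nested in $W$ such that every $A \in \Lambda_W \setminus \{W\}$ with $d_A(\pi_A(x),\pi_A(x')) \ge E$ appears on the list, together with the bound $d_W(\pi_W(x), \mathcal P^{A_i}_W) \le N$.

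For $W = \mathcal R_v$, the axiom is vacuous: there is no $A \in \Lambda$ properly nested in $\mathcal R_v$, so we take $N$ small and the empty list. For $W = \widehat H_v$, the proper sub-domains are precisely the $\mathcal R_w$ with $w$ adjacent to $v$, and these are indexed by the boundary lines $\ell_e \in \mathbb L_v$ via the apex points $c_e$; here the key input is the Large Links Axiom (equivalently, the large-link / bounded-geodesic-image machinery) for the relatively hyperbolic structure of $Q_v = H_v$ with respect to $\mathbb L_v$ provided by Lemma~\ref{numberlines}. Passing through a geodesic in $H_v$ from $\pi_{\widehat H_v}(x)$ to $\pi_{\widehat H_v}(x')$, only boundedly many boundary lines $\ell_e$ are "shadowed" with projection larger than a uniform constant, those lines are close to the geodesic (hence $d_{\widehat H_v}(\pi_{\widehat H_v}(x), c_e)$ is controlled by the coned-off distance, which is $\le N$ up to adjusting $\lambda$), and for the remaining $w$ one uses Lemma~\ref{lem:K} together with the bounded-geodesic-image estimate in $H_v$ to see $d_{\mathcal R_w}(\pi_{\mathcal R_w}(x), \pi_{\mathcal R_w}(x'))$ is uniformly bounded. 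For $W = T$, $\Lambda_T \setminus \{T\}$ consists of all $\widehat H_v$ and all $\mathcal R_v$; here one walks along the geodesic $[\pi_T(x), \pi_T(x')]$ in $T$ (of length roughly $d_T(\pi_T(x),\pi_T(x'))$), lists the $\widehat H_v$ for $v$ on this geodesic and the $\mathcal R_v$ for $v$ on or adjacent to it, and checks that for any vertex $v$ far from the geodesic both $\pi_{\widehat H_v}$ and $\pi_{\mathcal R_v}$ of $x$ and $x'$ agree (or are uniformly close) because $x$ and $x'$ then project through the same strip/boundary plane; the inequality $d_T(\pi_T(x), \rho^{A_i}_T) \le N$ holds essentially by construction since $\rho^{A_i}_T$ is the vertex $v$ itself, which lies within $N$ of $\pi_T(x)$.

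The main obstacle I expect is the $W = \widehat H_v$ case: one must carefully transfer the large-links conclusion from the relatively hyperbolic space $H_v$ (with peripherals $\mathbb L_v$) to the coned-off space $\widehat H_v$ and simultaneously control the transverse $\mathcal R_w$ projections, i.e. check that whenever $d_{\mathcal R_w}(\pi_{\mathcal R_w}(x), \pi_{\mathcal R_w}(x'))$ is large the corresponding boundary line $\ell_e$ is genuinely shadowed by the $H_v$-geodesic between the projections of $x$ and $x'$. This is exactly where Lemma~\ref{lem:K} (bounding the $\mathcal R_w$-projection diameter by the $\ell_e$-projection distance of the neighboring boundary lines) and the quasiconvexity statements of Lemma~\ref{lemma:epsilon} and Remark~\ref{rem:rdistance} do the work, and getting the constants $\lambda$ and $E$ uniform over all vertices (using finiteness of vertex orbits under $G$) requires a little care but no new ideas.
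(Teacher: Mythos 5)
Your proposal follows essentially the same route as the paper's proof: it splits on $W \in \{T, \widehat H_v, \mathcal R_v\}$, observes vacuity for $\mathcal R_v$, invokes the relatively hyperbolic HHS structure on $H_v$ from Lemma~\ref{numberlines} together with Lemma~\ref{lem:K} to handle $\widehat H_v$, and walks along the Bass--Serre geodesic for $W = T$ while using the equivariance of boundary planes to control far-away projections. The paper additionally organizes the $\widehat H_v$ case into three sub-cases according to whether $v$ meets the edges $e, e'$ whose boundary planes contain $x, x'$, but that is a bookkeeping refinement of exactly the strategy you describe.
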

\begin{proof}

  We endow each $H_v$ with the HHS structure originating from the fact that $H_v$ is hyperbolic relative to the collection of boundary lines $\mathbb{L}_v$ (see Lemma~\ref{numberlines}). As a result, Large Link Axiom is applied to this HHS structure. Since there are finitely many vertices up to $G$--action, there are uniform constants $\lambda \ge 1$, $L \ge 1$ such that for each vertex $v \in T$ the following holds: Let $x$ and $y$ be two points in $H_v$, and let $N' = \lambda d_{\widehat{H}_v} (\pi_{\ell}(x), \pi_{\ell}(x')) + \lambda$. Then there exists $\ell_1, \ell_2, \cdots, \ell_{\lfloor N' \rfloor}$ such that for all boundary lines $\ell \in \mathbb{L}_v$ then either $\ell = \ell_i$ for some $i$, or $d_{\ell} (\pi_{\ell}(x), \pi_{\ell}(x')) < L$. Also, $d_{\widehat{H}_v}(\pi_{\widehat{H}_v}(x), P^{\ell_{i}}_{\widehat{H}_v}) \le N'$ for each $i$.

Let $r>0$ be the constant given by Remark~\ref{remark:useful}. We could enlarge $\lambda$ if necessary and assume that $\lambda > 2r$.  We are going to verify the following statement in the Large Links Axiom of Definition~\ref{defn:HHS}.
``  Let $W \in \Lambda$ and let $x, x' \in X$. Let $N = \lambda d_{w} (\pi_{W}(x), \pi_{W}(x')) + \lambda$. Then there exists $A_1, A_2, \cdots, A_{\lfloor N \rfloor} \in \Lambda_{W} \backslash \{W\}$ such that for all $A \in \Lambda_{W} \backslash \{W\}$ then either $A = A_i$ for some $i$, or $d_{A} (\pi_{A}(x), \pi_{A}(x')) < E$. Also, $d_{W}(\pi_{W}(x), P^{A_{i}}_{W}) \le N$ for each $i$ ''.
 
 Recall that $x_0$ is in the boundary plane $F_{e_0}$ for some edge $e_0$. Since $X$ is the orbit $G(x_0)$ and the collection of boundary planes is $G$--equivariant, it follows that $x$ and $x'$ belong to boundary planes $F_{e}$ and $F_{e'}$ for some edges $e$ and $e'$. 
 
 We first observe that if $W = \mathcal{R}_{v}$ for some vertex $v \in V(T)$ then there is nothing to show since the set $\Lambda_{W} \backslash \{W\} = \emptyset$. Hence, in the rest of the proof, we only need to consider the cases $W = \widehat{H}_v$ and $W =T$.

{\it  Case~1:} Suppose that $W = \widehat{H}_v$ for some vertex $v \in T$. Let $A$ be an arbitrary element in  $\Lambda_{W} \backslash \{W\}$. As $A$ is nested in $W = \widehat{H}_v$ and $A \neq W$, it follows that $A = \mathcal{R}_w$ for some vertex $w$ which is adjacent to $v$.

 {\it Case~1.1:} $v$ is a vertex of both edges $e$ and $e'$. Without loss of generality, we assume that $(e)_{+} v$ and $(e')_{+} = v$. In this case, we note that $F_{e}$ and $F_{e'}$ are two boundary planes in the vertex space $B_{v} = H_{v} \times \mathbb{R}$.  Let $\ell_e$ and $\ell_{e'}$ be the boundary lines in $H_v$ corresponding to edges $e$ and $e'$ respectively. Let $\bar{x}$ and $\bar{x'}$ be the projection point of $x$ and $x'$ into the boundary line $\ell_e$ and $\ell_{e'}$ respectively.
 
 
 
 Each boundary line $\ell_j$ with $j \in \{1, \dots, k \}$ is associated to an edge $e_j$ with $(e_{j})_{+} = v$. 
  We denote $v_j$ by $(e_j)_{-}$. Let $A_{j} : = \mathcal{R}_{v_j}$ with $j \in \{1, \dots, k \}$. If the vertex $w$ is equal to $v_j$ for some $j \in \{1, \dots, k \}$ then $A = A_j$. Now we assume that $w \notin \{v_1, \dots, v_k\}$. Let $\ell$ be a boundary line of $H_v$ associated to the edge $[v,w]$. We have that $d_{\ell}(x, x') \le L$, otherwise $w = v_j$ for some $j$. Let $A >0$ be the constant given by Lemma~\ref{lem:K}. We have 
 \begin{align*}
     d_{A} (\pi_{A}(x), \pi_{A}(x')) &= 
diam \bigl (\pi_{\mathcal{R}_w}(\mathcal{R}_{(e)_{-}}, \mathcal{R}_{(e')_{-}})  \bigr ) \\
& \le A d_{\ell}(\ell_{e}, \ell_{e'}) + A \\
&\le A d_{\ell}(x, x') + A
 \le AL + A \le E
 \end{align*}

It follows from the second paragraph above that with each $i = 1, \dots, k$, we have
 \begin{align*}
     d_{W}(\pi_{W}(x), \rho^{\ell_{i}}_{W}) & = d_{\widehat{H}_{v}} \bigl (\pi_{\widehat{H}_{v}}(x), \rho^{A_{i}}_{\widehat{H}_v} \bigr )  \le 
     N' \le N
 \end{align*}

{\it Case~1.2:} Assume that  $(e)_{+}  = v$ and $v$ is not a vertex of $e'$ (the case $(e')_{+}  = v$ and $v$ is not a vertex of $e$ is proved similarly).

When $v$ lies between $(e)_{-}$ and $e'$ then the proof is identical as Case~1.1.

When $(e)_{-}$ lies between $v$ and $e'$ then we define $A_1 = A_2 = \dots, = A_{\lfloor N \rfloor} =  \mathcal{R}_{(e)_{-}}$. We note that $d_{W} \bigl (\pi_{W}(x), c_e \bigr ) = d_{\widehat{H}_v} \bigl (\pi_{\widehat{H}_V}(x), c_e \bigr ) \le r$, and $d_{\widehat{H}_v} \bigl ( P^{\mathcal{R}_{(e)_{-}}}_{\widehat{H}_v}, c_e \bigr ) \le r$, thus
\[
d_{W}(\pi_{W}(x), P^{A_{i}}_{W}) \le 2r \le  N
\]
Let $A$ be an arbitrary element in  $\Lambda_{W} \backslash \{W\}$. As $A$ is nested in $W = \widehat{H}_v$ and $A \neq W$, it follows that $A = \mathcal{R}_w$ for some vertex $w$ which is adjacent to $v$. If $A \notin \{A_1, \dots, A_{\lfloor N \rfloor} \}$ then $w \neq (e)_{-}$. Since $\pi_{A}(x') = \pi_{\mathcal{R}_w} (x)$ is $\mathcal{S}_{[v,w], e} \cap F_{[v,w]} \subset \mathcal{R}_w$, and $\pi_{A}(x)$ and $\mathcal{S}_{[v,w], e} \cap F_{[v,w]}$ are close within a uniform Hausdorff distance in the plane $F_{[v,w]}$, it follows that the distance of two bounded subsets (with respect to the metric of $\mathcal{R}_w$) $\pi_{A}(x)$ and $\pi_{A}(x')$ no more than $E$.

{\it Case~1.3:} Suppose that
 $F_{e}$ and $F_{e'}$ are not boundary planes of $B_{v}$. 
 
 Denote the last edge in the geodesic from $e$ to $v$ is $f$, and the last edge in the geodesic from $e'$ to $v$ is $f'$. Note that both $(f)_{+} = (f')_{+} =v$. We will consider the collection $\{A_i\}_{i=1}^{\lfloor N \rfloor} =  \{ \mathcal{R}_{(f)_{-}}, \mathcal{R}_{(f')_{-}} \}$. Similarly as in Case~1.2, if $A$ is nested in $W$ and $A \neq W$ then $A = \mathcal{R}_w$ for some vertex $w$ that is adjacent to $v$. If $A \notin \{A_1, \dots, A_{\lfloor N \rfloor} \}$ then $w \notin \{(f)_{-}, (f')_{-} \}$. It follows from Lemma~\ref{lem:K} that $d_{A} \bigl (\pi_{A}(x), \pi_{A}(x') \bigr ) < E$. 
 
For each $i$, since $\pi_{W}(x)$ is a point in the boundary line $\ell_{f}$ of $H_{v}$, and $P^{A_i}_{W}$ is the apex-point $c_{f}$, and thus $d_{W}(\pi_{W}(x), P^{A_i}_{W}) \le 2r \le N$.

{\it Case~2:} $W$ is the Bass-Serre tree $T$. Recall that $x \in F_e$ and $x' \in F_{e'}$.

Let $\alpha$ be the geodesic in $T$ joining the edge $e$ to $e'$. Let
\[
\{A_1 , A_2, \cdots, A_{\lfloor N \rfloor} \} = \{\mathcal{R}_v, \widehat{H}_v \, \bigl | \, v \, \text{is a vertex in}\, \alpha \}
\]
Since $A$ is properly nested in $W =T$, it follows that $A = \mathcal{R}_w$ or $\widehat{H}_w$ for some vertex $w$ in $T$. 

If $A \notin  \{A_1 , A_2, \cdots, A_{\lfloor N \rfloor} \} $ then $w \notin \alpha$. Using the same argumenrts as in the previous subcase, we have that $d_{A} (\pi_{A}(x), \pi_{A}(x')) < E$. For each $i \in \{1, \cdots, \lfloor N \rfloor \}$ then $A_i$ is either $\mathcal{R}_v$ or $\widehat{H}_v$ for some vertex $v \in \alpha$. Let $\pi \colon X \to T$ be the index map given by Remark~\ref{rem:indexfunction}. Recall that we define the projection $\pi_{T}$ given by $\pi_{T}(x) = \pi(x)$ for any vertex $x \in T$. Denote $\alpha(0)$ and $\alpha(1)$ be the initial and terminal vertices of the geodesic $\alpha$. Since $\alpha$ is a geodesic joining $e$ to $e'$ and $x \in F_{e}$, $x' \in F_{e'}$, it follows that
$d(\pi(x), \alpha(0)) \le 2$ and $d(\pi(x'), \alpha(1)) \le 2$.
As $\rho^{A_i}_{W} = \rho^{A_i}_{T} =v$, we have
\begin{align*}
    d_{W} \bigl (\pi_{W}(X), \rho^{A_i}_{W} \bigr ) &= d_{T}\bigl ( \pi(x), v \bigr ) \le 1 + d_{T} \bigl (\alpha(0), \alpha(1) \bigr ) \\ & \le 5 + d_{T}\bigl (\pi(x), \pi(x') \bigr ) \\
    &\le \lambda d_{T} \bigl (\pi(x), \pi(x') \bigr ) + \lambda \\
    &= \lambda d_{W} \bigl (\pi_{W}(x), \pi_{W'}(x') \bigr ) + \lambda = N
\end{align*}
The lemma is proved.
\end{proof}

 \subsubsection{Bounded Geodesic Image}
 \begin{lemma}
 \label{lem:boundedgeo}
 $(X, \Lambda)$ satisfies Bounded Geodesic Image Axiom.
 \end{lemma}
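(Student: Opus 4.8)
The plan is to verify the Bounded Geodesic Image (BGI) Axiom case by case according to the type of the pair $V \sqsubsetneq W$ in the index set $\Lambda$, using the fact that every $\rho^W_V$ was defined in Section~\ref{nesting} either as a fixed apex point, a fixed vertex of $T$, or (on the $\calC W$ side) as a nearest-point-type projection into a bounded subset of $\mathcal R_v$. Recall that the only pairs with $V$ properly nested in $W$ are: (i) $W = T$ and $V = \widehat H_v$; (ii) $W = T$ and $V = \mathcal R_v$; and (iii) $W = \widehat H_w$ and $V = \mathcal R_v$ with $v, w$ adjacent. In each case I must show that for every geodesic $\gamma$ in $\calC W$, either $\diam_{\calC V}(\rho^W_V(\gamma)) \le E$ or $\gamma$ enters the $E$--neighborhood of $\rho^V_W$.

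First I would dispatch the cases where $\calC W = T$. Here $\rho^V_W$ is the single vertex $v$, and $\rho^W_V$ is the map that sends any vertex at distance $\le 1$ from $v$ to all of $\calC V$ and, for a vertex $x$ with $d_T(x,v)\ge 2$, sends $x$ to $\mathcal S_{e_1 e_2} \cap F_{e_2}$ (resp. the apex $c_{e_2}$) determined by the last two edges $e_1, e_2$ of $[x,v]$. The key observation is that if a geodesic $\gamma$ in $T$ does \emph{not} pass within $1$ of $v$ (i.e. $\gamma \cap \mathcal N_E(\rho^V_W) = \emptyset$ with $E \ge 1$), then every vertex of $\gamma$ lies on the same side of $v$, so the last edge $e_2$ of the segment $[x,v]$ is the \emph{same} edge for every vertex $x$ on $\gamma$; hence $\rho^W_V(\gamma)$ is contained in a single strip $\mathcal S_{e_1 e_2}\cap F_{e_2}$ (only $e_1$ varies over finitely many possibilities along $\gamma$ — in fact $e_1$ too is eventually constant once $\gamma$ is at distance $\ge 2$), which Remark~\ref{remark:useful} bounds by a uniform constant in $\mathcal R_v$, resp. collapses to the single apex $c_{e_2}$ in $\widehat H_v$. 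So $\diam_{\calC V}(\rho^W_V(\gamma)) \le r \le E$. The remaining case $W = \widehat H_w$, $V = \mathcal R_v$ is handled the same way: $\rho^V_W = c_e$ is the apex over $e = [w,v]$, and the map $\rho^W_V$ sends $c_e$ to all of $\mathcal R_v$ and any $x \ne c_e$ to $\mathcal S_{\bar x}\cap F_e$ where $\bar x$ is the projection of $x$ to $H_v \subset \widehat H_w$; a geodesic $\gamma$ in $\widehat H_w$ missing $\mathcal N_E(c_e)$ stays in the hyperbolic space $H_w$ (away from the cone over $\ell_e$), so $\rho^W_V(\gamma)$ is the image of the nearest-point projection of $\gamma$ onto the boundary line $\ell_e = F_e \cap H_w$, and here I invoke the BGI property already available inside $H_w$ from its relatively hyperbolic HHS structure (Lemma~\ref{numberlines}, with $\mathbb L_w$ the peripheral lines): either that projection has bounded diameter in $\calC V = \mathcal R_v$ (using Remark~\ref{remark:useful}(3) to transfer a length bound along $\ell_e$ in $H_w$ to a diameter bound in $\mathcal R_v$), or $\gamma$ comes uniformly close to $\ell_e$, equivalently into $\mathcal N_E(c_e)$.

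The main obstacle I anticipate is the third case: I need the constant $E$ to be uniform over all vertices $v$ (handled, as always in this paper, by the finiteness of orbit types under the $G$--action) \emph{and} I must be careful that the ``transfer'' between the metric on the boundary line $\ell_e$ inside the $\CAT$ hyperbolic space $H_w$ and the coned-off metric on $\mathcal R_v$ is only a uniform additive/multiplicative distortion — this is exactly the content of Remark~\ref{remark:useful}(2)--(3), which says the identified fiber lines $\mathfrak f_e$ and the boundary line $\ell_w$ are comparable up to an additive $2r$. The subtlety is that BGI in $H_w$ is stated for geodesics of $\calC W = H_w$ and peripheral \emph{subsets}, whereas here my geodesic lives in $\widehat H_w$; but a geodesic of $\widehat H_w$ that avoids the cone point $c_e$ fellow-travels (in $H_w$) a concatenation of a bounded number of geodesics and excursions into \emph{other} cones, and BGI in the relatively hyperbolic structure of $H_w$ controls the $\ell_e$--projection of each such piece. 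I would phrase this as: the nearest-point projection to $\ell_e$ of any $\widehat H_w$--geodesic missing $\mathcal N_E(c_e)$ has uniformly bounded diameter unless the geodesic already meets $\mathcal N_E(\ell_e)$, which is the dichotomy we want.

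Finally I would assemble the cases into the statement of Lemma~\ref{lem:boundedgeo}, choosing $E$ to be the maximum of the finitely many case constants together with the $E$ coming from BGI in the spaces $H_v$ and the constant $r$ of Remark~\ref{remark:useful}, noting that enlarging $E$ only makes the axiom easier. The verification that $\gamma \cap \mathcal N_E(\rho^V_W) \ne \emptyset$ in the ``bad'' cases is immediate from the above: in the tree cases $\rho^V_W$ is a vertex and ``$\gamma$ gets within $1$ of it'' is the complementary condition to ``all vertices of $\gamma$ lie on one side of $v$,'' and in the $\widehat H_w$ case $\rho^V_W = c_e$ and ``$\gamma$ gets within $E$ of $c_e$'' is the complementary condition to ``$\gamma$ stays in $H_w$ away from the $e$--cone,'' so the dichotomy is exhaustive.
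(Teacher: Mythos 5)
Your proof follows essentially the same strategy as the paper's: the tree case is handled by observing that a geodesic in $T$ missing $\mathcal N_1(v)$ has every vertex sharing the same last two edges toward $v$, so $\rho^W_V(\gamma)$ collapses to a single bounded set (or apex) of diameter $\le r$, and the $\widehat H_w$ case is handled by invoking the BGI axiom from the relatively hyperbolic HHS structure of $H_w$ (Lemma~\ref{numberlines}) and transferring the bound to $\calC V = \mathcal R_v$ via Remark~\ref{remark:useful}. The one superfluous point is your ``subtlety'' about BGI applying only to geodesics of $H_w$ rather than $\widehat H_w$: in the HHS structure attached to a relatively hyperbolic space the maximal hyperbolic space $\calC S$ \emph{is} the coned-off space $\widehat H_w$, so BGI there already governs geodesics of $\widehat H_w$ directly, and the proposed fellow-traveling decomposition into pieces is not needed (the paper simply applies the axiom to $\gamma \subset \widehat H_w$ as is).
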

 
 \begin{proof}
 We endow each $H_v$ with the HHS structure originating from the fact that $H_v$ is hyperbolic relative to the collection of boundary lines $\mathbb{L}_v$ (see Lemma~\ref{numberlines}). As a result, Bounded Geodesic Image Axiom is applied to this HHS structure. Since there are finitely many vertices up to $G$--action, there exists a constant $E >0$ such that properties in   Bounded Geodesic Image Axiom hold for every vertices $v \in T$. Let $r >0$ be the constant given by Remark~\ref{remark:useful}. We could enlarge $E$ so that $E > r$.

 We are going to verify \eqref{eqn:4}  in the Definition~\ref{defn:HHS}. Let $\gamma$ be a geodesic in $\calC W$.
Let $\gamma_{-}$ and $\gamma_{+}$ be the initial and terminal points of $\gamma$.
We consider the following cases:

{\it Case~1:} Suppose that $W = T$. Since $V$ is properly nested in $W$, it follows that $V$ is either  $\mathcal{R}_{v}$ or $\widehat{H}_v$ for some vertex $v$ in $T$. There are two subcases we will consider: $d(v, \gamma) \ge 2$ or $d(v, \gamma) \le 1$.
If $d(v, \gamma) \ge 2$ then the two geodesics $[\gamma_{-}, v]$ and $[\gamma_{+}, v]$ share the last common two consecutive edges, denoted by $e_1, e_2$. When $V = \mathcal{R}_v$ then it follows from the definition of $\rho^{W}_{V}$ that $\rho^{W}_{V}(x) = \mathcal{S}_{e_1,e_2} \cap F_{e_2} \subset \mathcal{R}_v$ for any $x \in \gamma$. Since $\mathcal{S}_{e_1,e_2} \cap F_{e_2}$ is a bounded subset in $\mathcal{R}_v$ with the diameter no more than $r$ (see Remark~\ref{remark:useful}), it follows that $\diam_{\calC V}(\rho^{W}_{V}(\gamma)) \le E$. If $V = \widehat{H}_v$ then  $\rho^{W}_{V}(x)$ is the apex-point $c_{e_2}$ for any $x \in \gamma$. Hence \eqref{eqn:4} is verified since $\diam_{\calC V}(\rho^{W}_{V}(\gamma)) \le E$.
If $d(v, \gamma) \le 1$ then $\gamma \cap \mathcal{N}_{2}(\rho^{V}_{W}) = \gamma \cap \mathcal{N}_{2}(v) \neq \emptyset$. Thus \eqref{eqn:4} is verified.

{\it Case~2:}
Suppose that $W = \mathcal{R}_v$ for some vertex $v \in T$. Then there is nothing to show since there is no element $V \in \Lambda$ such that $V$ is properly nested in $W$.

{\it Case~3:}
Suppose that $W = \widehat{H}_v$. Since $V$ is properly nested in $W$, it follows that $V = \mathcal{R}_w$ where $w$ and $v$ are adjacent vertices. Let $e$ denote the edge $[v,w]$, and let $\ell$ be the boundary line in $H_v$ associated to the edge $e$. Since \eqref{eqn:4} holds for hierarchically hyperbolic structure of $H_v$, we have that
\[
\diam_{\ell}(\rho^{\widehat{H}_v}_{\ell}(\gamma)) \le E \,\,\text{or}\,\, \gamma \cap \mathcal{N}_{E}(\rho^{\ell}_{\widehat{H}_v}) \neq \emptyset 
\] 
Note that $\diam_{\ell}(\rho^{\widehat{H}_v}_{\ell}(\gamma))$ and $\diam_{\mathcal{R}_w}(\rho^{\widehat{H}_v}_{\ell}(\gamma))$ are within a uniform bounded distance, and $\rho^{\ell}_{\widehat{H}_v}$ and $(\rho^{\mathcal{R}_W}_{\widehat{H}_v}$ are the apex point $c_e$. Thus it follows that there exists a constant $E'$ which does not depend on $v$, $\ell$, and $\gamma$ such that
\[
\diam_{\mathcal{R}_w}(\rho^{\widehat{H}_v}_{\ell}(\gamma)) \le E' \,\,\text{or}\,\, \gamma \cap \mathcal{N}_{E'}(\rho^{\mathcal{R}_W}_{\widehat{H}_v}) \neq \emptyset 
\] 
Replacing $E$ by $\max \{E, E' \}$ if necessary, we have that \eqref{eqn:4} holds. 
\end{proof}

\subsubsection{Partial Realization}
\begin{lemma}
$(X, \Lambda)$ satisfies Partial Realization Axiom.
\end{lemma}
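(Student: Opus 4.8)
The key observation is that orthogonality in $\Lambda$ is supported on a single vertex or on a single edge. Indeed, by our definition of $\perp$, since $T$ is orthogonal to nothing and $\widehat{H}_v$ is orthogonal only to $\mathcal{R}_v$, a pairwise orthogonal family $\{V_j\}$ in $\Lambda$ must be one of: a singleton $\{\mathcal{R}_v\}$ or $\{\widehat{H}_v\}$; a pair $\{\mathcal{R}_v,\widehat{H}_v\}$ on one vertex $v$; or a pair $\{\mathcal{R}_v,\mathcal{R}_w\}$ for adjacent vertices $v,w$. Since a tree contains no triangle, there is no pairwise orthogonal family of size $\ge 3$ (this is also reflected in the complexity being $3$). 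In each case I will choose the realization point $x$ inside --- or, since $X=G(x_0)$ is only coarsely dense in the ambient \CAT space, within uniformly bounded distance of --- a boundary plane of a single block, respectively the boundary plane shared by two adjacent blocks.

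\textbf{Constructing $x$.} First consider a pair $\{\mathcal{R}_v,\mathcal{R}_w\}$ with $e=[v,w]$ and associated boundary plane $F_e\subset B_v\cap B_w$, a flat isometric to $\mathbb{R}^2$. By axiom~(4) of Definition~\ref{defn:admissible}, the images of $Z(G_v)$ and $Z(G_w)$ generate a finite-index subgroup of $G_e\cong\mathbb{Z}^2$; hence the $\mathbb{R}$--factor of $B_v$ and the $\mathbb{R}$--factor of $B_w$ are linearly independent and span $F_e$, giving coarse coordinates $(a,b)$. By Remark~\ref{remark:useful}, for a point of $F_e$ the projection $\pi_{\mathcal{R}_v}$ records only the coordinate $a$ (the $B_w$--fibre line through the point is coned off in $\mathcal{R}_v$), and symmetrically $\pi_{\mathcal{R}_w}$ records only $b$; since $\mathcal{R}_v$ and $\mathcal{R}_w$ are quasi-lines, these two data can be prescribed independently. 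Thus, given $p_1\in\pi_{\mathcal{R}_v}(X)$ and $p_2\in\pi_{\mathcal{R}_w}(X)$, I would take $x\in X$ to be an orbit point within bounded distance of the point of $F_e$ whose $a$--coordinate is that prescribed by $p_1$ and whose $b$--coordinate is that prescribed by $p_2$; such $x$ exists because $G_e$ acts cocompactly on $F_e$ and $G(x_0)$ is coarsely dense. The remaining cases are the same argument in a single block $B_v$: for $\{\mathcal{R}_v,\widehat{H}_v\}$ the point $p_2$ lies within $r$ of some boundary line $\ell_{e'}\in\mathbb{L}_v$ by Remark~\ref{rem:rdistance}, so one takes $x$ in the boundary plane $F_{e'}$ with $H_v$--coordinate near $p_2$ and $\mathbb{R}$--coordinate chosen (independently) to realize $p_1$ in $\mathcal{R}_v$; singletons simply drop one of the two constraints.

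\textbf{Verifying the axiom.} Fix $x$ as above; finiteness of the number of $G$--orbits of blocks makes all constants uniform, so a single $\alpha$ will do. The estimate $d_{V_j}(x,p_j)\le\alpha$ holds by construction together with the fact that the $\pi_{V_j}$ are uniformly coarsely Lipschitz. For nesting: if $V_j=\mathcal{R}_v$, the only $V\neq V_j$ with $V_j\sqsubseteq V$ are $T$ and the spaces $\widehat{H}_u$ with $u$ adjacent to $v$; since $\pi(x)$ is $v$ or the adjacent $w$ we get $d_T(\pi_T(x),\rho^{V_j}_T)=d_T(\pi(x),v)\le 1$, and since $x$ lies in the boundary plane $F_{[u,v]}$ of $B_u$ its $\widehat{H}_u$--projection lies on the boundary line $H_u\cap F_{[u,v]}$, hence within $r$ of the apex $c_{[u,v]}=\rho^{V_j}_{\widehat{H}_u}$; the case $V_j=\widehat{H}_v$ is analogous. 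For transversality, if $W\pitchfork V_j$ then $W$ is supported at a vertex $u$ with $d_T(u,v)\ge 2$ --- closer choices are ruled out by the orthogonality and nesting relations --- so $P^{V_j}_W$ and $\pi_W(x)$ are given by the \emph{same} strip/apex construction attached to the last edge(s) of the $T$--geodesic from $u$ toward the block carrying $x$, and since $v$ and $w$ are adjacent this far end does not depend on whether $\pi(x)$ equals $v$ or $w$. Comparing the two using Remark~\ref{remark:useful} (for the $\mathcal{R}$--targets) and the inclusion of each apex in the $r$--neighborhood of its boundary line (for the $\widehat{H}$--targets), exactly as in the transversality and consistency verifications above, gives $d_W(x,P^{V_j}_W)\le\alpha$ after enlarging $\alpha$.

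I expect the only real obstacle to be the simultaneous realization of $p_1$ and $p_2$ in the pair case $\{\mathcal{R}_v,\mathcal{R}_w\}$: both constraints live on the single plane $F_e$, and they decouple precisely because the two $\mathbb{Z}$--fibre directions span $F_e$, i.e.\ because of admissibility axiom~(4). If those fibres were parallel --- which (4) forbids --- a single coordinate of $F_e$ would control both projections and the axiom could fail. Everything else reduces, by cocompactness, to the local block geometry already analyzed in the preceding axiom checks.
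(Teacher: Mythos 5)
Your construction of the realization point $x$ is the same as the paper's — find the apex point $\bar x$ in a boundary line (for $\{\mathcal{R}_v,\widehat{H}_v\}$, via Remark~\ref{rem:rdistance}) or intersect two fiber-parallel lines in the shared plane $F_e$ (for $\{\mathcal{R}_v,\mathcal{R}_w\}$, using that the two fiber directions are transverse) — and your verification of the three estimates, which the paper itself omits as "similar to Section~\ref{Transversality and consistency}," is sound. One small imprecision: in the nesting check you write that $x$ "lies in the boundary plane $F_{[u,v]}$ of $B_u$" for every $u$ adjacent to $v$, whereas $x$ lies in only one such plane ($F_{[v,w]}$ or $F_{e'}$); the correct statement, which still yields $d_{\widehat{H}_u}(x,\rho^{V_j}_{\widehat{H}_u})\le r$, is that the nearest-point projection of $x$ into $H_u$ lands on the boundary line $H_u\cap F_{[u,v]}$ because the tree geodesic from $\pi(x)$ to $u$ passes through $v$.
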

\label{lem:partial}

\begin{proof}
Since $\{V_i\}$ be a family of pairwise orthogonal elements of $\Lambda$. It follows from the definition of orthogonality (see Section~\ref{orthogonality}) that the collection $\{V_i\}$ is either $\{\mathcal{R}_v, \widehat{H}_v \}$ for some vertex $v \in T$ or $\{V_i\} = \{\mathcal{R}_v, \mathcal{R}_w \}$ for some adjacent vertices $v$ and $w$ in $T$. In other words, the collection $\{V_i\}$ consists only two elements $\{V_1, V_2\}$.

Let $p_1$ and $p_2$ be two arbitrary elements in $\pi_{V_1}(X)$ and $\pi_{V_2}(X)$. We are going to define a point $x \in X$ so that it satisfies the conditions in Partial Realization Axiom as the following.

{\it Case~1:} Assume that $V_1 = \mathcal{R}_v$ and $V_2 = \widehat{H}_v$. Let $r >0$ be a sufficiently large constant such that it applies to both Remark~\ref{remark:useful} and Remark~\ref{rem:rdistance}. We note that $p_2$ is a point in $H_v$. By Remark~\ref{remark:useful}, there exists a point $\bar{x}$ in a boundary line $\ell$ of $H_v$ such that $d_{H_v}(p_2, \bar{x}) \le r$. Let $x$ be a point in $B_v = H_v \times \mathbb{R}$ such that its projection into $H_v$ is $\bar{x}$ and its $\mathbb R$--coordinate is the same as $p_1$.

{\it Case~2:} Assume that $V_1 = \mathcal{R}_v$ and $V_2 = \widehat{H}_w$ where $v$ and $w$ are adjacent vertices. Let $e$ denote the edge $[v,w]$. Since $p_1 \in \pi_{\mathcal{R}_v}(X)$ and $p_2 \in \pi_{\mathcal{R}_w}(X)$, it follows that they belong to boundary planes of $B_v$ and $B_w$ (this follows from the definition of $\pi_{\mathcal{R}_v}$ and $\pi_{\mathcal{R}_w}$). Choose $q_1$ and $q_2$ to be points in the plane $F_e$ such that the $\mathbb{R}$--coordinate of $q_1$ in $B_v = H_v \times \mathbb{R}$ is the same as of $p_1$ and the $\mathbb{R}$--coordinate of $q_2$ in $B_w = H_{w} \times \mathbb{R}$ is the same as of $p_2$. Let $\ell$ be the line in the plane $F_e$ that is parallel to $\mathbb R$--factor of $B_v$ and passes through $q_1$. Let $\ell'$ be the line in the plane $F_e$ that is parallel to $\mathbb R$--factor of $B_w$ and passes through $q_2$. These two lines $\ell$ and $\ell'$ intersect at a point, and we will denote this point by $x$.

Verifying the point $x$ satisfies the conditions in Partial Realization uses similar arguments as in Section~\ref{Transversality and consistency} and is hence omitted to avoid redundancy.
\end{proof}

\subsubsection{Uniqueness}
\begin{lemma}
\label{lem:uniqueaxiom}
$(X, \Lambda)$ satisfies Uniqueness Axiom.
\end{lemma}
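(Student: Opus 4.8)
The plan is to deduce the Uniqueness Axiom from the distance formula for the hyperbolic spaces $H_v$ (which, being hyperbolic relative to $\mathbb{L}_v$, carry their own HHS structure, hence satisfy Uniqueness), combined with the product structure $B_v = H_v \times \mathbb{R}$ and the tree $T$. The idea is that if $d_X(x,y)$ is large, then one of three things must be responsible: $x$ and $y$ sit in blocks that are far apart in $T$; they sit in (nearby) blocks but their $H_v$-coordinates are far apart; or they sit in (nearby) blocks with close $H_v$-coordinates but their $\mathbb{R}$-coordinates (fiber directions) are far apart. In the first case $d_T(\pi_T(x),\pi_T(y))$ is large; in the second case $d_{\widehat{H}_v}(\pi_{\widehat{H}_v}(x),\pi_{\widehat{H}_v}(y))$ is large \emph{unless} the $H_v$-geodesic between the projections passes deep into a cone, in which case the corresponding $\mathcal{R}_w$-coordinate becomes large; in the third case $d_{\mathcal{R}_v}(\pi_{\mathcal{R}_v}(x),\pi_{\mathcal{R}_v}(y))$ is large.

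Concretely, first I would fix $k \ge 0$ and suppose $d_X(x,y) \ge \Theta$ for $\Theta$ to be chosen. Since $X = G(x_0)$, both $x$ and $y$ lie on boundary planes; let $u = \pi(x)$, $u' = \pi(y)$. If $d_T(u,u')$ is large (larger than some threshold depending on $k$ and the Lipschitz constant $L$ of the index map), we are done with $W = T$. So assume $d_T(u,u')$ is bounded; then there is a uniformly bounded-diameter subtree containing the geodesic $[u,u']$, and by coarsening we may essentially reduce to the case where $x$ and $y$ lie in a single block $B_v = H_v \times \mathbb{R}$ (pushing $x$, $y$ to nearby points of $B_v$ changes all relevant projection distances by a uniformly bounded amount, using Remark~\ref{remark:useful} and the coarse Lipschitz continuity of the various projections). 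Write $x = (\bar{x}, s)$, $y = (\bar{y}, t)$ in $H_v \times \mathbb{R}$. Then $d_X(x,y)$ is coarsely $d_{H_v}(\bar{x},\bar{y}) + |s-t|$. If $|s - t|$ is large, then the fiber coordinates are far apart, and by item (3) of Remark~\ref{remark:useful} together with the definition of $\pi_{\mathcal{R}_v}$ (which records the fiber line of a point of $B_v$), $d_{\mathcal{R}_v}(\pi_{\mathcal{R}_v}(x),\pi_{\mathcal{R}_v}(y))$ is large, so $W = \mathcal{R}_v$ works.

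It remains to treat the case where $d_{H_v}(\bar{x},\bar{y})$ is large. Here I invoke the Uniqueness Axiom for the HHS structure on $H_v$ coming from $H_v$ being hyperbolic relative to $\mathbb{L}_v$ (Lemma~\ref{numberlines}): there is $\Theta_0 = \Theta_0(k)$ so that $d_{H_v}(\bar{x},\bar{y}) \ge \Theta_0$ implies either $d_{\widehat{H}_v}(\bar{x},\bar{y}) \ge k'$ for a suitable $k'$, or $d_{\ell}(\bar{x},\bar{y}) \ge k'$ for some boundary line $\ell \in \mathbb{L}_v$ with associated edge $e = [w,v]$. In the first subcase, since $\pi_{\widehat{H}_v}$ is the nearest-point projection to $H_v \subset \widehat{H}_v$, we get $d_{\widehat{H}_v}(\pi_{\widehat{H}_v}(x),\pi_{\widehat{H}_v}(y)) \ge k$ after absorbing bounded errors, so $W = \widehat{H}_v$. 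In the second subcase, the large projection distance to a boundary line $\ell$ translates, via the definition of $\pi_{\mathcal{R}_w}$ and item (3) of Remark~\ref{remark:useful} (with the roles of the fiber lines played by the $H_v$-geodesics feeding into $\ell$), into $d_{\mathcal{R}_w}(\pi_{\mathcal{R}_w}(x),\pi_{\mathcal{R}_w}(y))$ being large, so $W = \mathcal{R}_w$. Choosing $\Theta$ large enough to dominate all the thresholds introduced above finishes the proof.

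The main obstacle I anticipate is the bookkeeping in the reduction to a single block: when $x$ and $y$ lie in genuinely different (though $T$-close) blocks, the projections $\pi_{\mathcal{R}_v}$ and $\pi_{\widehat{H}_v}$ are defined via strips $\mathcal{S}_{e,e'}$ and shortest geodesics in the intermediate $H_u$'s, and one has to verify that the ``large $d_X$-distance'' survives as a large distance in one of these auxiliary coordinates rather than being dissipated across several blocks. This is where Lemma~\ref{lem:K}, Lemma~\ref{lemma:epsilon} and the $\epsilon$-quasiconvexity of $\ell \cup \gamma_{e,e'} \cup \ell'$ do the real work, controlling how $H_u$-geodesics interact with the boundary lines and strips; making the constants uniform (using finiteness of vertex orbits) is routine but must be stated carefully.
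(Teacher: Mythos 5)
Your high-level strategy matches the paper's: split on whether $\pi(x)$, $\pi(y)$ are far in $T$ (take $W=T$); otherwise look at a single block $B_v = H_v \times \mathbb{R}$, split the distance between the $\mathbb{R}$-factor (take $W=\mathcal{R}_v$) and the $H_v$-factor, and in the latter case use the relatively hyperbolic HHS structure on $H_v$ from Lemma~\ref{numberlines} to choose $W = \widehat{H}_v$ or $W = \mathcal{R}_w$. That part of your outline is exactly what the paper does.

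The gap is in your ``reduce to a single block'' step. You assert that when $d_T(u,u')$ is bounded, ``pushing $x$, $y$ to nearby points of $B_v$ changes all relevant projection distances by a uniformly bounded amount.'' This is false as stated: boundedness of $d_T(u,u')$ gives no bound on $d_X(x, B_v)$. Even with $d_T(u,u') = 1$, the point $x$ can lie arbitrarily deep inside $B_u$, far from the boundary plane shared with $B_v$, so any ``push'' to $B_v$ moves it by an amount comparable to $\Theta$ itself, and the hypothesis $d_X(x,y) \ge \Theta$ is destroyed. Your cited tools (Lemma~\ref{lem:K}, Lemma~\ref{lemma:epsilon}, $\epsilon$-quasiconvexity of $\ell \cup \gamma_{e,e'} \cup \ell'$) control how projections and strips interact \emph{within} blocks, but they do not establish that a large $d_X$-distance spread across several $T$-close blocks must concentrate in a single block.

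What actually closes this gap in the paper is a different ingredient you never invoke: the concatenated path $\gamma = [x,p_1][p_1,p_2]\cdots[p_n,y]$ through the strips $\mathcal{S}_{e_{i-1}e_i}$ is a uniform $(\mu,\mu)$-quasi-geodesic (Lemma~4.6 of \cite{NY20}). Since $d_T(u,u') \le k$ bounds the number of segments by $k+2$, and the quasi-geodesic condition gives $\Len(\gamma) \succ \Theta$, a pigeonhole argument produces a single segment $[p_{i_0},p_{i_0+1}]$ of length $\succ \Theta/(k+2)$ lying entirely in one block $B_{v_{i_0}}$. Only \emph{then} can one project to the $H_{v_{i_0}}$ and $\mathbb{R}$ factors and run the dichotomy you describe. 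Without the quasi-geodesicity of the strip path (or an equivalent substitute), your reduction is not justified, and the proof does not go through. You should replace the ``coarsening/pushing'' heuristic with the pigeonhole on the strip path and cite the quasi-geodesic lemma explicitly.
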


\begin{proof}
We are going to verify the following statement.
For each $k \ge 0$, there exists $\Theta = \Theta(k)$ such that for any $x, y$ in $X$, $d(x, y) \ge \Theta$ then there exists $V \in \Lambda$ such that $d_{V}(x,y) \ge k$.

We endow $H_v$ with the HHS structure originating from the fact that $H_v$ is hyperbolic relative to the collection of boundary lines $\mathbb{L}_v$ (see Lemma~\ref{numberlines}). As a result, Uniqueness Axiom is applied to this HHS structure. Since there are finitely many vertices up to $G$--action, there exists a constant $\xi(k) >0$ such that for each vertex $v$, for any two points $\bar x$ and $\bar y$ in $H_v$ with $d_{H_v}(\bar x, \bar  y) \ge \xi(k)$ then either $d_{\widehat{H}_{v}}(\bar x, \bar y) \ge k$ or $d_{\ell}(\bar x, \bar y) \ge k$ for some boundary line $\ell \in \mathbb{L}_v$.

Let $\mu > 1$ be the constant given by Lemma~4.6 in \cite{NY20}. Let $\Theta$ be a sufficiently large constant such that 
\[
\Theta \ge \mu \bigl ( \mu + k (4 \sqrt{2}(k+3)) \bigr ) + 4 \sqrt{2} \xi(k)
\]
Let $\pi \colon X \to T$ be the index map given by Remark~\ref{rem:indexfunction}.

{\it Case~1:} Two vertices $\pi(x)$ and $\pi(y)$ has distance at least $k$ in $T$. 
Recall that we define the projection $\pi_{T} = \pi$. In this case, we will choose $V$ to be $T$. It follows that  $d_{V} (x, y) = d_{T}(x, y) := d_{T}(\pi_T (x), \pi_T (y)) = d_{T}(\pi(x), \pi(y)) \ge k$.

{\it Case~2:}  Two vertices $\pi(x)$ and $\pi(y)$ has distance at most $k$ in $T$.

Suppose that $\pi(x) = \pi(y)$. Let $v$ be the vertex $\pi(x) = \pi(y)$. Then we have that $x \in Y_{\pi(x)} = Y_v$ and  $y \in Y_{\pi(y)} = Y_v$. Let $\gamma$ be a geodesic in $B_v = H_v \times \mathbb{R}$ connecting $x$ to $y$. Let $\gamma_1$ and $\gamma_2$ be the projections of $\gamma$ into factors $H_v$ and $\mathbb R$ of $B_v$ respectively.  Since $d(x, y) \ge 
\Theta$, it follows that 
\[
\Len(\gamma_1) + \Len (\gamma_2) \ge \frac{\Theta}{\sqrt{2}}
\]
Hence either $\Len(\gamma_1)$ or $\Len (\gamma_2)$ is greater than or equal to $\frac{\Theta}{4\sqrt{2}}$. If $\Len (\gamma_2) \ge \frac{\Theta}{4\sqrt{2}}$ then we will choose $V = \mathcal{R}_v$. If \[ \Len(\gamma_1) \ge \frac{\Theta}{4\sqrt{2}} \ge \xi(k),\] i.e, $d_{H_v}((\gamma_{1})_{-}, (\gamma_{1})_{+}) \ge \xi(k)$, then either $d_{\widehat{H}_{v}}(\bar x, \bar y) \ge k$ or $d_{\ell}(\bar x, \bar y) \ge k$ for some boundary line $\ell \in \mathbb{L}_v$. In the first case, we will let $V = \widehat{H}_v$. In the later case, let $e$ be the edge in $T$ with $(e)_{-} = v$ such that $\ell$ is associated to $e$. We then let $V = \mathcal{R}_w$ where $w = (e)_{+}$.


Now we assume that $\pi(x) \neq \pi(y)$.  Let $e_{1} \cdots e_{n}$ be the geodesic edge path connecting  $\pi(x)$ to $\pi(y)$ and let  $p_i=\mathcal S_{e_{i-1}e_i}\cap \mathcal S_{e_{i}e_{i+1}}$ be the intersection point of adjacent strips, where $e_{0}:=x$ and $e_{n+1}:=y$. Hence we have the following path which is a concatenation of geodesics: $$\gamma := [p_0, p_{1}][p_{1}, p_{2}]\cdots [p_{{n-1}}, p_{n}][p_{n}, p_{n+1}]$$ where $p_0: =x$ and $p_{n+1} : = y$. Since we assume that $d_{T}(\pi(x), \pi(y)) \le k$, it follows that $n \le  k+2$.

By Lemma~4.6 in \cite{NY20}, the path $\gamma$ is  a $(\mu, \mu)$--quasi-geodesic. It follows that 
$
d_{X}(x,y) / \mu - \mu \le \Len(\gamma)
$, and  hence $\Theta / \mu - \mu \le  \Len(\gamma) = \sum_{i=0}^{n} \Len ([p_i, p_{i+1}])$ as $d(x, y) \ge \Theta$. It implies that there exists $i_{0} \in \{0, \cdots, n \}$ such that
\[
\frac{\Theta / \mu - \mu}{2(n+1)} \le \Len ([p_{i_0}, p_{ {i_0} +1}])
\]
Let $\alpha$ and $\beta$ be the projections of $[p_{i_0}, p_{ {i_0} +1}]$ into the factors $H_{v_{i_0}}$ and $\mathbb{R}$ of $B_{v_{i_0}}$ respectively (here $v_{i_0} : = e_{i_0} 
\cap e_{i_{0}+1}$). The above inequality implies
\[
\frac{\Theta / \mu - \mu}{2 \sqrt{2}(n+1)} \le \Len(\alpha) + \Len(\beta)
\] Thus either $\Len(\alpha)$ or $\Len (\beta)$ is greater than or equal to $\frac{\Theta / \mu - \mu}{2 \sqrt{2}(n+1)}$. We choose $V$ similarly as above (case $\pi(x) = \pi(y)$).
\end{proof}

\begin{proof}[Proof of Proposition~\ref{numberlines}]
The proof is the combination of the previous lemmas in this section.
\end{proof}

\section{Appendix: Poisson boundaries of hierarchically hype
rbolic spaces}
\label{appendix}
Let $G$ be a group that acts geometrically on a hierarchically hyperbolic space
(Definition \ref{defn:HHS}). Such a group we refer to as an HHS group, and note that conjecturally it is not equivalent to being a hierarchically hyperbolic group. In this section, we show that for an appropriate choice of $\kappa$ that only depends on the HHS group $G$, the 
$\kappa$-Morse boundary of $G$ serves as  a topological model for 
the Poisson boundaries of the pair $(G, \mu)$, where $\mu$ is a finitely-supported  
, non-elementary, generating measure.

We need to show that a generic sample path of such a random walk sublinearly tracks a $\kappa$-Morse quasi-geodesic ray. We will do so by showing that, in fact, 
the limiting quasi-geodesic ray is $\kappa$-weakly contracting((Definition 5.3 in \cite{QRT20}). This proof follows Section 8 of \cite{QRT20} where the same result is claimed for mapping class groups.

Without loss of generality, equip $X$ with a base-point $\go$ and we use $\go_S$ to denote a point in $\pi_S(\go)$. Fix $\go_S$ once and for all. 

\medskip

\subsection*{The hierarchy of geodesics} Let $S$ denote the maximal element in $
\Lambda$ and recall $\calC Y$ denoted the associated hyperbolic spaces for each $Y \in \Lambda$.
To every pair of points $x, y \in G$ one can associate a \emph{hierarchy
of geodesics}, which are a set of geodesic segments in each $\calC Y$ that connects 
 $\pi_Y(x)$  to 
$\pi_Y(y)$. Hence we also write $H(x, y) = \{ [x,y]_{Y} \}$ for specific given $Y \in \Lambda$. 
Given $H(x,y) = \{ [x,y]_{Y} \,| Y \in \Lambda \}$,  a realization of a hierarchy $H(x,y)$ is a uniform quasi-geodesic segment $\res(x, y)$ in 
$G$ connecting $x$ to $y$ where, for any element of $Y \in \Lambda$, the projection of $\res(x, y)$ to
$\calC Y$ is contained in a uniformly bounded neighborhood of the geodesic segment 
$[x,y]_Y$. The set of all realizations we denote $\res(x,y)$.

We can also replace $x$ or $y$ with elements of $G$ whose projection to $\calC S$ are infinite quasi-geodesic rays.

[We start with a (tight) geodesic $[x, \xi)_S$ in $\calC S$ and build $H(x, \xi)$, the same as before replacing, for every element $Y \in \Lambda$, $\pi_Y(y)$ with 
$\pi_Y(\xi)$. The realization $\res(x,\xi)$ of $H(x, \xi)$ is then a uniform quasi-geodesic 
in $G$ starting from $x$ such that the \emph{shadow} of $\res(x,\xi)$ in $\calC S$  converges to $\xi$. ]

\begin{definition}[Centers]
Since elements of $\Lambda$ are uniformly hyperbolic spaces, let them all be $D$--hyperbolic. There exists a constant $\delta(D)$ such that  for any three points $x, y, z$, the intersection of the $\delta(D)$--neighbourhood of the geodesic segments $[x, y], [y, z]$ and $[x, z]$ are non-empty. Given any $y \in \Lambda$, we use $ctr_Y (x,y,z)$ to denote any point in the said intersection. It follows from the construction of HHS that there exists a constant $D'$ such that: Given $x, y, z \in G$, for any $Y$ 
there exists a point $\eta \in G$ such that, for any $Y \subseteq S$, 
we have 
\begin{equation}\label{centerconstant}
d_Y\big(\eta_Y, \cent_Y(x,y,z)\big) \leq D'. 
\end{equation}

We call $\eta$ the \emph{center} of $x$, $y$ and $z$ and we denote 
it by $\cent(x, y, z)$. 
\end{definition}

We use the hierarchy paths to show: 

\begin{proposition} \label{P:hierarchy}
Let $p$ be the complexity of the hierarchy of $G$. For any $x, y \in X$, assume that 
$d_Y(x, y) \leq E$ for all $Y \neq S, Y \in \res$ and some $E > 1$. Then we have 
$$d_G(x, y) \prec d_S(x, y) \cdot E^p.$$
\end{proposition}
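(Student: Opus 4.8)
The plan is to reduce the estimate to the Behrstock--Hagen--Sisto distance formula for the HHS $G$, together with a combinatorial count of the coordinate spaces that actually contribute to it, the count being obtained by iterating the Large Links axiom down the nesting order. This is the HHS analogue of the argument for mapping class groups in Section~8 of \cite{QRT20}.

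First I would invoke the distance formula: there is a constant $s_0$, depending only on $G$, so that for every threshold $s\ge s_0$,
\[
d_G(x,y)\ \asymp\ \sum_{W\in\Lambda}[\,d_W(x,y)\,]_s,
\]
where $[a]_s=a$ if $a\ge s$ and $[a]_s=0$ otherwise, with implied multiplicative and additive constants depending only on $s$ and $G$ (hence, $p$ being fixed, only on $G$). Fix $s$ to be the maximum of $s_0$ and the constant $E_{\mathrm{LL}}$ appearing in the Large Links axiom. If the hypothesis holds with $E\le s$, then no $W\ne S$ has $d_W(x,y)>s$, the sum collapses to $d_S(x,y)$, and $d_G(x,y)\asymp d_S(x,y)$ already gives the claim; so I may assume $E\ge s$. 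Separating off the maximal domain $S$ and using $d_W(x,y)\le E$ for all $W\ne S$, the right-hand side is at most $d_S(x,y)+E\cdot\mathcal N(x,y)$, where $\mathcal N(x,y):=\#\{W\in\Lambda\setminus\{S\}:d_W(x,y)>s\}$. Thus the whole proposition comes down to bounding $\mathcal N(x,y)$.

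I would bound $\mathcal N(x,y)$ by induction on complexity, using the fact from \cite{BHS19} that for each $W\in\Lambda$ the down-set $\Lambda_W=\{V:V\sqsubseteq W\}$, with the restricted projections, is again an HHS with maximal element $W$, of complexity strictly less than $p$ when $W\ne S$, and with structure constants controlled by those of $G$. The inductive claim to carry is: in an HHS of complexity $q$, for points $a,b$ with $d_V(a,b)\le E$ for every domain $V$ (including the maximal one), the number of domains $V$ with $d_V(a,b)>s$ is $\prec E^{q-1}$; for the inductive step, Large Links at the maximal domain produces $\lesssim \lambda E+\lambda$ proper domains that capture all other relevant ones, each lying in an HHS of complexity $\le q-1$ to which the claim applies with the same $E$, so the count picks up a factor $O(E)$ per level. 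Returning to the given pair $x,y$: Large Links at $S$ yields $\le \lambda d_S(x,y)+\lambda$ proper domains $A_1,\dots,A_N$ such that every $W\sqsubsetneq S$ with $d_W(x,y)\ge E_{\mathrm{LL}}$ — in particular every $W$ counted by $\mathcal N(x,y)$, as $s\ge E_{\mathrm{LL}}$ — is nested in some $A_i$; each $\Lambda_{A_i}$ has complexity $\le p-1$ and satisfies $d_V(x,y)\le E$ for all its domains, so the inductive claim bounds the relevant domains inside it by $\prec E^{p-2}$. Summing over $i$ gives $\mathcal N(x,y)\prec \bigl(1+d_S(x,y)\bigr)E^{p-2}$, whence $d_G(x,y)\prec \bigl(1+d_S(x,y)\bigr)E^{p-1}$, and in particular $d_G(x,y)\prec d_S(x,y)\,E^{p}$, as required. (Equivalently, avoiding the distance formula, one runs exactly this recursion on the length of a realization $\res(x,y)$ of the hierarchy $H(x,y)$: its $\calC S$--shadow has length $\asymp d_S(x,y)$, and each of its excursions is a realization path inside one of the sub-HHSs $\Lambda_{A_i}$.)

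The step I expect to be the main obstacle is making this recursion uniform. One must check that the HHS data of each $\Lambda_{A_i}$ — in particular the Large Links constants $\lambda$ and $E_{\mathrm{LL}}$, the hyperbolicity constant, and the distance-formula constants for the fixed threshold $s$ — can be bounded independently of $i$ and of the depth of the recursion; this follows from finiteness of the ambient HHS structure (only finitely many vertices up to the $G$--action) together with the fact that passing to a down-set only restricts the data. One must also verify that the hypothesis $d_W(x,y)\le E$ is genuinely inherited at every stage, which it is since every domain occurring in the recursion is properly nested in $S$. A secondary technical point is the calibration of $s$: it must be at least $E_{\mathrm{LL}}$ so that the ``relevant'' domains in the distance formula are exactly those that Large Links confines, and the degenerate regime $E<s$ has to be disposed of separately, as above.
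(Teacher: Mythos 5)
Your argument is the paper's argument, made precise. Both proofs reduce via the distance formula and then induct on complexity, with the count of contributing proper domains multiplied by $O(E)$ at each level and by $O(d_S(x,y))$ at the top. Where they differ is in how the inductive step is justified. The paper's write-up imports mapping class group notation wholesale, summing over ``$\alpha \in [x,y]_S$'' for $\alpha$ a minimal element of $\Lambda$ and then over domains with $\alpha \sqsubset Y \sqsubset S$; this is exactly what tight geodesics and the hierarchy machinery give in $\MCG$, but in a general HHS there is no canonical identification of points along a $\calC S$--geodesic with minimal domains, and the double sum is not a well-formed object. The Large Links axiom is precisely what replaces that identification: at the maximal domain of each sub-HHS $\Lambda_{A_i}$ it produces $O(E)$ (resp.\ $O(d_S(x,y))$ at $S$) proper domains that capture every further contributor. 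Your version invokes it explicitly, verifies that the hypothesis $d_V(x,y)\le E$ is inherited by the sub-hierarchies, and flags the one genuine point of care --- uniformity of the HHS constants across the $\Lambda_{A_i}$ and across the levels of the recursion, which follows from \cite{BHS19} (the hierarchy structure restricts to nested domains with controlled constants). As a bonus, your bookkeeping gives the slightly sharper bound $d_G(x,y)\prec d_S(x,y)\cdot E^{p-1}$, which of course implies the $E^p$ bound stated. The only thing I would add is a sentence dispatching the case $d_S(x,y)=O(1)$, where the statement reads $d_G(x,y)\prec E^p$ and follows directly from the inductive claim applied with maximal domain $S$ once one notes $d_S(x,y)\le E$ may be assumed there after enlarging $E$.
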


\begin{proof}
In view of the Distance Formula, we need to show 
\[
|H(x, y)| \prec d_S(x, y) \cdot E^p. 
\]
The restriction of $H(x,y)$ to a element of $\Lambda$ is again a hierarchy which we denote 
with $H_Y(x,y)$. Let $\alpha$ denotes minimal elements in the poset $\Lambda$, and let 
We check the statement of Proposition~\ref{P:hierarchy} inductively. When $p=1$ the statement is vacuously true. 
Now let $p \geq 2$. By induction, that for every element of $\Lambda$, the hierarchy $H_Y(x,y)$ satisfies $|H_Y(x,y)| \prec d_Y(x, y) \cdot E^{p-1}$. We have
\begin{align*}
|H(x,y)| &\prec \sum_{\alpha \in [x,y]_S} \left( \big| [x, y]_\alpha \big| 
 + \sum_{\alpha \sqsubset Y \sqsubset S} |H_Y(x,y)| \right)\\
&\prec \big| [x,y]_S \big| \cdot (E + 2 d_Y(x, y) \cdot E^{p-1}). 
\end{align*}
But $d_Y(x, y) \leq E$ and $\big|[x,y]_S\big| \prec d_S(x, y)$, thus by the Distance Formula $|H(x, y)| \prec d_S(x, y) \cdot E^p$. 
\end{proof}

Note that, once again we can replace each of $x,y,z$ with an element $\xi \in \partial \calC S$.  That is, $\cent(x,y, \xi)$ is a well-defined element of $G$. 
From now on, we will denote as $\go$ the identity element in $G$, which will 
function as base point.

\begin{definition}(Projections in HHS)
Let $D$ be given from \ref{centerconstant}, and let $\xi$ be an element of $G$ whose shadow in $\calC S$ is an infinite diameter, quasi-geodesic ray in $\partial \calC S$. 
We define a \emph{$D$-cloud of a ray in the direction of $\xi$} to be
\[ 
\calZ(\go, \xi)  := \Big\{ z \in G \ \big| \  d_{\calC Y} \big( z_Y,  [\go, \xi)_Y\big) \leq D \quad  \forall \, Y \Big\}. 
\]
\end{definition}
By construction, the realization $\res(\go, \xi)$ of the hierarchy $H(\go, \xi)$
is contained in $\calZ(\go, \xi)$. 
Fixing $\xi \in  \partial \calC S$, we define a projection map to the cloud:
\begin{align*}
\Pi_\xi \from G \to \calZ(\go, \xi)   \qquad\text{where}\qquad   \Pi_\xi(x) := \cent(\go, x, \xi), \qquad x \in G. 
\end{align*}

We now check that $\Pi_\xi$ is a $\kappa$-projection 
according to Definition \ref{weakprojection}.

\begin{lemma}
Let $\xi \in G$ be an element whose shadow in $\calC S$ is an infinite diameter, quasi-geodesic ray in $\partial \calC S$. , the map $\Pi_\xi$ is coarsely Lipschitz with respect to $d_G$. 
Furthermore, if 
$x \in \calZ(\go, \xi)$, then $d_G(x, \Pi_\xi(x))$ is uniformly bounded. As a consequence, $\Pi_\xi$ is a $\kappa$-projection. 
\end{lemma}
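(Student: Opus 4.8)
The plan is to push every assertion down to the coordinate spaces $\calC Y$ by means of the Uniqueness Axiom of Definition~\ref{defn:HHS} (equivalently, the Distance Formula), and to control $\cent_Y$ there using two elementary facts about coarse centres of triangles in a $\delta$--hyperbolic space $Z$, valid with constants depending only on $\delta$: (i) if $a,b,c,c'\in Z$ and $d_Z(c,c')\le t$, then any coarse centre of $a,b,c$ lies within $t+O(\delta)$ of any coarse centre of $a,b,c'$; (ii) if $c$ lies within $D$ of the geodesic (or geodesic ray) $[a,b]$, then any coarse centre of $a,b,c$ lies within $2D+O(\delta)$ of $c$. Both are one-line Gromov-product computations: the position of a coarse centre of $a,b,c$ along the side $[a,b]$ equals the Gromov product $(b\mid c)_a$ up to $O(\delta)$, so (i) follows from $|(b\mid c)_a-(b\mid c')_a|\le d_Z(c,c')$, while (ii) follows from $(b\mid c)_a\ge d_Z(a,c)-2D$, which holds as soon as $c$ is $D$--close to $[a,b]$.

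\textbf{Coarse Lipschitz.} It suffices to treat $d_G(x,y)\le 1$, the general statement then following by concatenating along an edge path in the Cayley graph $X=G$. For such $x,y$, the uniform coarse Lipschitz bounds on the $\pi_Y$ (Projections Axiom) give $d_Y(\pi_Y(x),\pi_Y(y))\le K$ for all $Y\in\Lambda$, whence by fact (i) we get $d_Y\big(\cent_Y(\go_Y,\pi_Y(x),\xi_Y),\cent_Y(\go_Y,\pi_Y(y),\xi_Y)\big)\le k_1$ for a uniform $k_1=k_1(\delta,K)$; combining with \eqref{centerconstant} yields $d_Y(\Pi_\xi(x),\Pi_\xi(y))\le k_1+2D'=:k_2$ for every $Y$. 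The Uniqueness Axiom, read contrapositively, then gives $d_G(\Pi_\xi(x),\Pi_\xi(y))\le\Theta(k_2+1)$, a uniform constant, and chaining produces $d_G(\Pi_\xi(x),\Pi_\xi(y))\le C_0\,d_G(x,y)+C_0$ in general.

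\textbf{The cloud is coarsely pointwise--fixed, and the $\kappa$--projection property.} Let $x\in\calZ(\go,\xi)$, so $d_{\calC Y}(x_Y,[\go,\xi)_Y)\le D$ for all $Y$. Applying fact (ii) in $\calC Y$ to the triangle with vertices $\go_Y$, $x_Y$ and the (coarse) endpoint $\xi_Y$ of the quasi-geodesic ray $[\go,\xi)_Y$, we obtain $d_Y\big(x_Y,\cent_Y(\go_Y,x_Y,\xi_Y)\big)\le 2D+O(\delta)$, and \eqref{centerconstant} upgrades this to $d_Y(x,\Pi_\xi(x))\le k_0$ for a uniform $k_0$ and every $Y$; the Uniqueness Axiom then bounds $d_G(x,\Pi_\xi(x))$ by a uniform constant $C_1$. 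Finally, for arbitrary $x\in G$ and $z\in Z:=\calZ(\go,\xi)$,
\[
\diam_G\big(\{z\}\cup\Pi_\xi(x)\big) \le d_G(z,\Pi_\xi(z))+d_G(\Pi_\xi(z),\Pi_\xi(x))+O(1) \le C_0\,d_G(x,z)+\big(C_0+C_1+O(1)\big),
\]
by the previous two steps. This is precisely the defining inequality of a $\kappa$--projection in Definition~\ref{weakprojection} with $D_1=C_0$ and $D_2$ large enough that $D_2\kappa(x)$ dominates the additive constant (possible since $\kappa\ge 1$); indeed the bound is a genuine nearest--point--projection estimate with no sublinear term.

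\textbf{Main obstacle.} The only delicate point is fact (ii) and its use when $\xi$ is a boundary-type object rather than a point of $G$: one must interpret the ``triangle'' $\go_Y,x_Y,\xi_Y$ and its coarse centre through the ray $[\go,\xi)_Y$ and check that the Gromov-product inequalities survive with bounded error. This is exactly where uniform hyperbolicity of the family $\{\calC Y\}$ is used, together with the observation (recorded right after the definition of $\calZ(\go,\xi)$) that $\res(\go,\xi)\subset\calZ(\go,\xi)$, so that $[\go,\xi)_Y$ really is a uniform quasi-geodesic ray in each coordinate. Everything else — uniformity of $K$, $D'$, $\delta$ and of $\Theta(\cdot)$ — is routine, coming from cocompactness of the action and finiteness of $\Lambda/G$.
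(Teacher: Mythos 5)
Your proof is correct and reaches the same conclusions, but it routes the key step through the Uniqueness Axiom rather than through the paper's Proposition~\ref{P:hierarchy}. In both cases one first shows that, for $d_G(x,x')\le 1$, the coordinate projections satisfy $d_Y(\Pi_\xi(x),\Pi_\xi(x'))\le \text{const}$ for all $Y\in\Lambda$ (you do this via elementary Gromov-product estimates for coarse centres in $\delta$-hyperbolic spaces combined with \eqref{centerconstant}; the paper asserts the same via ``Lipschitz dependence of the centre''). To upgrade a uniform bound in every $\calC Y$ to a bound in $d_G$, the paper invokes Proposition~\ref{P:hierarchy} (a distance-formula estimate giving the polynomial $E^{p+1}$ bound), while you instead read the Uniqueness Axiom contrapositively to get the qualitative bound $\Theta(k_2+1)$. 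Your route is more elementary and does not need the sharper polynomial dependence; the paper's route is slightly overkill here but keeps the argument uniform with later estimates where the polynomial exponent actually matters (e.g.\ in Proposition~\ref{P:contracting-morse}). One stylistic plus of your writeup: the paper's first line refers to ``bounded intersection number,'' a leftover from the mapping class group version of this argument that doesn't literally make sense in the abstract HHS setting; your appeal to the coarse Lipschitz clause of the Projections Axiom is the correct replacement. Your final paragraph correctly flags the one point deserving care (interpreting $\xi_Y$ and the coarse-centre estimates when $\xi$ is boundary-type), and the resolution you sketch matches the paper's convention.
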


\begin{proof}
Consider points $x, x' \in G$ where $d_{w}(x, x') \leq 1$.
Then $x(\theta)$ and $x'(\theta)$ have a uniformly bounded intersection 
number, which implies that there exists a uniform constant $C_1>0$ such that
\[
\forall \, Y, \qquad  d_Y(x_Y, x'_Y) \leq C_1. 
\]
Let $\eta := \cent(\go, x, \xi)$ and $\eta' := \cent(\go, x', \xi)$. 
Since $\calC Y$ is hyperbolic, the dependence of $\eta_Y$ on $x_Y$ is Lipschitz, 
that is, there exists a uniform constant $C_2>0$ such that 
\[
 \forall \, Y \subseteq S, \qquad  d_Y (\eta_Y, \eta'_Y) \leq C_2. 
\]
Now, Proposition \ref{P:hierarchy} implies that 
\[
d_{w} (\Pi_\xi(x), \Pi_\xi(x')) \prec (C_2)^{p+1}
\]
which means $\Pi_\xi$ is coarsely Lipschitz. Similarly, if $x \in \calZ(\go, \xi)$
then for $\eta = \cent(\go, x, \xi)$ we have $ d_Y (x_Y, \eta_Y) \leq C_2$
for all elements of $\Lambda$ and hence, $d_Y(x, \Pi_\xi(x))\prec  (C_2)^{p+1}$. 
\end{proof}

\begin{lemma}\label{L:P-bound}
There exists $L > 0$ such that the following holds. Let $x, y \in G$, and let $\gamma \in G$ be an infinite geodesic ray based at $\go$,  and let $x_\gamma := c_{\gamma, y}(x)$.  For any $Y \in \Lambda$, if $d_Y(x, x_\gamma) \geq L$, we have 
$$d_Y(x,x_\gamma) \leq d_Y(x, y) +L.$$
\end{lemma}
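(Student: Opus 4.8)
The plan is to prove the estimate separately in each coordinate space $\calC Y$, where, after recognizing $x_\gamma$ as a bounded perturbation of the coarse center of a triangle, it reduces to the triangle inequality in a $\delta$--hyperbolic space.

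Unwinding the definition, $x_\gamma = c_{\gamma,y}(x)$ is the center $\cent(x,y,\xi)$, where $\xi \in \partial\calC S$ is the endpoint of (the shadow of) the ray $\gamma$. Fix $Y \in \Lambda$, so $Y \sqsubseteq S$. By the defining property of centers, inequality \eqref{centerconstant}, the projection $(x_\gamma)_Y$ lies within $D'$ of $\cent_Y\big(\pi_Y(x),\pi_Y(y),\pi_Y(\xi)\big)$; and since $\calC Y$ is $D$--hyperbolic, this triangle center lies within $\delta(D)$ of the side $[x,y]_Y$, i.e.\ of the geodesic in $\calC Y$ from $\pi_Y(x)$ to $\pi_Y(y)$. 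Writing $C := D' + \delta(D)$, there is thus a point $z \in [x,y]_Y$ with $d_Y\big((x_\gamma)_Y, z\big) \le C$. Since $z$ lies on a geodesic from $\pi_Y(x)$ to $\pi_Y(y)$ we have $d_Y(\pi_Y(x),z) \le d_Y(x,y)$, and therefore
\[
d_Y(x,x_\gamma) = d_Y\big(\pi_Y(x),(x_\gamma)_Y\big) \le d_Y(\pi_Y(x),z) + C \le d_Y(x,y) + C .
\]
Taking $L := C$ (and enlarging it if convenient for later applications) gives the lemma. Note that the hypothesis $d_Y(x,x_\gamma) \ge L$ is not logically needed — when it fails the inequality is automatic — it merely isolates the regime in which the statement carries content.

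The step I expect to be the main obstacle is the uniform identification of $(x_\gamma)_Y$ with a triangle center for \emph{all} $Y$ at once. When $Y = S$ the third vertex $\xi$ is a genuine ideal point of $\calC S$, so $\cent_S(\pi_S(x),\pi_S(y),\xi)$ has to be realized as a limit of centers of finite triangles whose third vertex escapes along the shadow of $\gamma$, and one invokes $\delta$--thinness — with $\delta = \delta(D)$ uniform over $\Lambda$ by the definition of an HHS (Definition~\ref{defn:HHS}) — to see that the limit still lies in a $\delta(D)$--neighbourhood of $[x,y]_S$. When $Y \ne S$ the shadow of $\gamma$ in $\calC Y$ is only a uniformly bounded set, so $\cent_Y(\pi_Y(x),\pi_Y(y),\pi_Y(\gamma))$ must instead be read as the center of the coarse triangle on those three coarse points, and one checks that it again lies uniformly close to $[x,y]_Y$. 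In both cases the output fed into the triangle-inequality step is the same — $(x_\gamma)_Y$ is within a uniform constant of $[x,y]_Y$ — and once that uniform bound is secured the remainder is routine.
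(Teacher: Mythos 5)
Your reading of $c_{\gamma,y}(x)$ as the coarse center $\cent(x,y,\xi)$ makes the lemma true, and under that reading your proof is correct: by inequality~\eqref{centerconstant} together with $\delta$-thinness of triangles in each uniformly hyperbolic $\calC Y$, the projection $(x_\gamma)_Y$ lies within a uniform constant $C := D'+\delta(D)$ of the side $[x,y]_Y$, and then $d_Y(x,x_\gamma)\le d_Y(\pi_Y(x),z)+C\le d_Y(x,y)+C$ for the witnessing point $z\in[x,y]_Y$. The paper reaches the same key point — that $(x_\gamma)_Y$ is close to $[x,y]_Y$ — but by a different route: it introduces nearest-point projections $\pi_\gamma(x),\pi_\gamma(y)$ onto $\gamma$, compares the shadow of $[x,y]$ with the shadow of the broken path $[x,\pi_\gamma(x)]\cup[\pi_\gamma(x),\pi_\gamma(y)]\cup[\pi_\gamma(y),y]$, invokes the standard hyperbolicity fact that these are $R_0$-close once $d_Y(\pi_\gamma(x),\pi_\gamma(y))\ge L_1$, and then projects $x_\gamma$ onto $[x,y]$ to get the displayed estimate~\eqref{deltathin}. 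Your center-based argument is tighter in two respects. First, the paper's proof runs only in the regime $d_Y(\pi_\gamma(x),\pi_\gamma(y))\ge L_1$ and never verifies that this is implied by the stated hypothesis $d_Y(x,x_\gamma)\ge L$; your argument needs no such hypothesis at all, as you observe. Second, the paper's own notation is internally inconsistent: the proof opens with ``$y_\gamma:=\pi_\gamma(y)$'' and Proposition~\ref{P:bounded-proj} asserts ``$x_\gamma\in\gamma$'', both of which suggest $x_\gamma=\pi_\gamma(x)$ — but under that nearest-point-projection reading the lemma as stated is false (take $y=x$ with $x$ far from $\gamma$ in some $\calC Y$), so the center reading you adopt is the one that makes the statement meaningful. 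One minor correction to your closing discussion: the shadow of $\gamma$ in a proper $\calC Y$ is bounded but not \emph{uniformly} bounded over all $Y$; this does not affect your argument, which only needs that the coarse center exists and satisfies~\eqref{centerconstant}.
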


\begin{proof}
Let $y_\gamma := \pi_\gamma(y)$. Since $Y$ is hyperbolic, there exists $L_1, R_0$ such that, if $d_Y(\pi_\gamma(x), \pi_\gamma(y)) \geq L_1$, the geodesic $\gamma_1 := [x, y]$ in $G$ and the broken geodesic  
\[
\gamma_2 
  := [x, \pi_\gamma(x)] \cup [\pi_\gamma(x), \pi_\gamma(y)] \cup [\pi_\gamma(y), y]
\] 
lie in a $R_0$-neighborhood of each other for the metric $d_Y$. 
Let $p_1, p_2$ be nearest point projections (in $G$), respectively, of $x_\gamma$ and $y_\gamma$ onto $[x, y]$.
This implies 
\begin{align}\label{deltathin}
d_Y(x, y) & \geq d_Y(x, p_1)  \geq d_Y(x, x_\gamma) - R_0 
\end{align}
which proves the claim, as long as $L \geq R_0$.

\end{proof}

\begin{proposition} \label{P:bounded-proj}
Let $\gamma$ be a hierarchy path that is a geodesic ray with $\kappa$-excursion. Then, the map 
$\Pi_\gamma$ defined above is a $\kappa$-projection map. 
Furthermore, there exist $D_{1} < 1, D_{2}>1$ such that for any two points 
$x, y \in G$ we have
\[
d_{G}(x, y) \leq D_{1} \cdot d_{G}(x, \gamma) \qquad \Longrightarrow \qquad d_S(\pi_{\gamma}(x), \pi_{\gamma}(y)) \leq D_{2}.
\] 
\end{proposition}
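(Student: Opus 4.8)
The plan is to identify $\gamma$ with the point $\xi=\xi_\gamma\in\partial\calC S$ to which its $\calC S$--shadow converges; then $\gamma$ is, up to bounded Hausdorff distance, the realization $\res(\go,\xi)\subset\calZ(\go,\xi)$, so $\Pi_\gamma=\Pi_\xi$, and the previous lemmas already give that $\Pi_\gamma$ is coarsely Lipschitz and coarsely the identity on $\calZ(\go,\xi)$. Everything else I would deduce, exactly as in Section~8 of \cite{QRT20} with subsurface projections replaced by the coordinate maps $\pi_Y$, from two facts that encode the $\kappa$--excursion hypothesis geometrically: \emph{(i)} every $w\in\calZ(\go,\xi)$ satisfies $d_G(w,\gamma)\preceq\kappa(w)$; and \emph{(ii)} for any $x\in G$ and any $z\in\gamma$ lying far enough along $\gamma$ past $\eta:=\cent(\go,x,\xi)=\Pi_\gamma(x)$, the point $\eta$ lies within $\preceq\kappa(x)$ of $\gamma$ and, after this move along $\gamma$, within a uniform constant of a realization of $\res(x,z)$ (and symmetrically with the roles of $x$ and $z$ interchanged). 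I would prove (i) and (ii) the same way: choose the second point so that its $\calC S$--coordinate matches the first to within $\delta$ — possible because $\pi_S(\eta)$ lies on the geodesic $[\go,\xi)_S$, which is the shadow of $\gamma$; on every $Y\neq S$ both points then lie within $D$ of $[\go,\xi)_Y$, whose diameter is bounded sublinearly by the definition of $\kappa$--excursion; finally Proposition~\ref{P:hierarchy}, used together with the excursion bound on how many domains are ``active'' along $\gamma$, converts this coordinatewise control into the asserted bound in $d_G$.

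Granting (i) and (ii), the $\kappa$--projection assertion is short. Fix $x\in G$ and $z\in\gamma$; if $z$ lies within bounded $\gamma$--distance of $\eta=\Pi_\gamma(x)$ there is nothing to prove, so assume otherwise. By (ii), $\eta$ lies (up to a term $\preceq\kappa(x)$ of displacement along $\gamma$, and then within a uniform constant) on a realization of $\res(x,z)$; since realizations of hierarchies are uniform quasi-geodesics, this gives $d_G(x,z)\ge d_G(x,\eta)+d_G(\eta,z)-\kappa(x)-C$. Rearranging, $d_G(\eta,z)\le d_G(x,z)+\kappa(x)+C$, and since $\diam\Pi_\gamma(x)\le\xi$ this is exactly $\diam_G\big(\{z\}\cup\Pi_\gamma(x)\big)\preceq d_G(x,z)+\kappa(x)$, i.e.\ the inequality of Definition~\ref{weakprojection}.

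For the $\calC S$--contraction, put $R:=d_G(x,\gamma)$ and suppose $d_G(x,y)\le D_1R$ with $D_1<1$ to be fixed. By (i) applied to $\Pi_\gamma(x)\in\calZ(\go,\xi)$ one has $d_G(x,\Pi_\gamma(x))\ge R-\kappa(x)-C$, so $\Pi_\gamma(x)$ is a coarse nearest--point projection of $x$ to $\gamma$ at scale $R$, up to a sublinear error; likewise for $y$. Assume for contradiction that $\ell:=d_S\big(\pi_S\Pi_\gamma(x),\,\pi_S\Pi_\gamma(y)\big)$ is large, say with $\Pi_\gamma(x)$ preceding $\Pi_\gamma(y)$ along $\gamma$. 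Then $\res(\go,y)$ runs along $\gamma$ past $\Pi_\gamma(x)$ and only leaves $\gamma$ near $\Pi_\gamma(y)$, so (ii) applied to the triple $(\go,x,y)$ shows that a realization of $\res(x,y)$ passes uniformly close to $\Pi_\gamma(x)$; since the shadow of $\gamma$ in $\calC S$ is a $(q,q)$--quasigeodesic, this yields
\[
d_G(x,y)\;\succeq\;d_G\big(x,\Pi_\gamma(x)\big)+d_\gamma\big(\Pi_\gamma(x),\Pi_\gamma(y)\big)\;\succeq\;R+\tfrac{\ell}{q}-\kappa(x).
\]
Combined with $d_G(x,y)\le D_1R$ this forces $\tfrac{\ell}{q}\preceq(D_1-1)R+\kappa(x)$. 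As $D_1<1$ the term $(D_1-1)R$ is nonpositive, so when $R$ is large it absorbs the sublinear term $\kappa(x)$ and we get $\ell\le D_2$ for a suitable $D_2$; when $R$ stays bounded, $x$ and $y$ both lie within bounded distance of $\gamma$ and the bound on $\ell$ is immediate. (Here $\Pi_\gamma$ and the nearest--point projection $\pi_\gamma$ have coarsely equal $\calC S$--shadows by (i) and the $\kappa$--projection property just proved, so the displayed conclusion is the stated one.)

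I expect the real work to be facts (i) and (ii): the $\kappa$--excursion hypothesis directly controls only the individual projections $d_Y(\cdot,\cdot)$ for $Y\neq S$, and promoting this to genuine sublinear control of the ambient metric $d_G$ requires simultaneously bounding the size of each non-$S$ projection \emph{and} the number of domains that are relevant along $\gamma$ — this is precisely where the Distance Formula and the exact definition of $\kappa$--excursion from \cite{QRT20} have to be invoked with care. Once (i) and (ii) are in hand, both assertions of the proposition follow as above.
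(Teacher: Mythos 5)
Your outline is considerably more elaborate than the argument the paper actually gives, and the extra machinery is where the trouble lies. The paper does not route the second implication through any ``$\kappa$-excursion'' facts at all: it is a short argument by contradiction. Assuming $d_S(\pi_\gamma(x),\pi_\gamma(y))\ge L$, Lemma~\ref{L:P-bound} gives the coordinatewise comparison $d_Y(x,x_\gamma)\prec d_Y(x,y)$ for every $Y$ (including $Y=S$), and then a single application of the Distance Formula yields $d_G(x,y)\succ d_G(x,x_\gamma)\ge d_G(x,\gamma)$, contradicting $d_G(x,y)\le D_1\,d_G(x,\gamma)$ once $D_1$ is small. The $\kappa$-projection claim is likewise already dispatched by the lemma proved just before the proposition; no new input from the excursion hypothesis is needed. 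By contrast you defer essentially all the content to the unproven ``facts (i) and (ii),'' and you explicitly say you expect those to be ``the real work'' — so as it stands the proposal has an admitted gap exactly where the proof would have to happen.

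Beyond the unproven lemmas, the concluding absorption step in your argument does not close. You obtain $\ell/q\preceq (D_1-1)R+\kappa(x)$ and then claim that for $R$ large the negative term $(D_1-1)R$ absorbs $\kappa(x)$. But $\kappa(x)$ here means $\kappa\bigl(d_G(\go,x)\bigr)$, not $\kappa(R)$, and since $\gamma$ emanates from $\go$ it is entirely possible to have $d_G(\go,x)\gg R=d_G(x,\gamma)$ (take $x$ far out along $\gamma$ but a moderate distance off it). In that regime $\kappa(x)$ can dominate $(1-D_1)R$ even though $R\to\infty$, so the inequality gives no uniform bound on $\ell$; and the residual case ``$R$ bounded'' you dismiss as immediate also isn't, since $x$ may still be arbitrarily far from $\go$. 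The paper's route avoids this entirely because the comparison in Lemma~\ref{L:P-bound} is purely hyperbolic-geometric in each $\calC Y$ and feeds directly into a multiplicative Distance-Formula inequality, with no sublinear error terms to control.
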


\begin{proof}
We now fix $L$ as given by Lemma \ref{L:P-bound}, and we start by contradiction, by assuming that $d_S(\pi_\gamma(x), \pi_\gamma(y)) \geq L$.

By Lemma \ref{L:P-bound}, $d_S(x, x_\gamma) \prec d_S(x,y)$, and $d_Y(x, x_\gamma) \prec d_Y(x, y)$ 
whenever $d_Y(x, x_\gamma)$ is large enough.
Now, applying the Distance Formula (Theorem 4.5 \cite{BHS19}) to the pair of points $(x, y)$ we have:
\begin{align*}
d_{G}(x, y) &\asymp \sum_{Y \in \Lambda} \lfloor d_{ Y}(x, y) \rfloor_{L} + d_S(x, y)\\
	& \succ \sum_{Y \in \Lambda} \lfloor d_{ Y}(x, x_\gamma) \rfloor_{L} + d_S(x, x_\gamma) - O(\delta) \qquad \textup{ by Eq.}~\eqref{deltathin} \\
	& \asymp d_{G}(x, x_\gamma). 
\end{align*}
That is to say, there exists $D_{1} = D_{1}(L, \delta) $ such that 
\[
d_{G}(x, y) \geq D_{1} \cdot d_{G}(x, x_\gamma),
\]
which is a contradiction since $x_\gamma \in \gamma$. Therefore, setting $D_2 = L$ yields
\begin{equation*}
d_{G}(x, y) \leq D_{1} \cdot d_{G}(x, \gamma) \qquad  \Longrightarrow \qquad d_S((\pi_\gamma(x), \pi_\gamma(y)) \leq D_{2}.
\qedhere
\end{equation*}

\end{proof}

\subsection*{Logarithmic projections}

We now consider the set of elements in $G$ whose projection to $\calC S$ are quasi-geodesic rays in $\partial \calC S$ that have \emph{logarithmically bounded 
projection} to all elements of $\Lambda$. Recall in Definition~\ref{defn:HHS}, When $V \sqsubseteq W$ then there is a specific subset $\rho_{W}^{V} \subset \calC W$ such that $\diam (\rho_{W}^{V}) \le \xi$.

Given a non-maximal element $Y \in \Lambda$, let

\[
\Norm{Y}_S :=d_{S}(\go_S, \rho^Y_S). 
\] 
In comparison, for $x \in G$, define
\[
\Norm{x}_S :=d_{S}\big(\go_S, \pi_S(x)) \big).  
\]

\begin{definition}
For a constant $c > 0$, let $\calL$ be the set of elements in $G$ whose shadows to an infinite quasi-geodesic ray $\xi \in  \partial \calC S$ such that 
\begin{equation} \label{E:log-proj}
 d_Y(\go, \xi) \leq c \cdot  \log \Norm{Y}_S
\end{equation}
for every non-maximal element $Y \in \Lambda$.
\end{definition}

\begin{proposition} \label{P:contracting-morse}
For any $\xi \in \calL$, the set $\calZ(\go, \xi)$ is $\kappa$-weakly contracting, where 
$\kappa(r) = \log^{p}(r)$. Furthermore, any realization $\res(\go, \xi)$ of
the hierarchy $H(\go, \xi)$ is also $\kappa$-Morse.
\end{proposition}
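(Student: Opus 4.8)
The plan is to deduce the statement from two facts already in place: Proposition~\ref{P:bounded-proj} (the $\calC S$-coordinate of $\Pi_\xi$ is boundedly contracting) and Proposition~\ref{P:hierarchy} (uniform control of $d_G$ in terms of $d_S$ once all non-maximal coordinates are bounded), together with the definition of $\calL$ via \eqref{E:log-proj} and the implication ``$\kappa$-weakly contracting $\Rightarrow$ $\kappa$-Morse'' of \cite{QRT20}. At the outset I would record that $\Pi_\xi$ is a coarsely Lipschitz $\kappa$-projection onto $\calZ(\go,\xi)$ (preceding lemmas), that $\res(\go,\xi)\subseteq\calZ(\go,\xi)$, and — for later — that $\calZ(\go,\xi)$ lies within uniformly bounded Hausdorff distance of $\res(\go,\xi)$: any $z\in\calZ(\go,\xi)$ has all coordinates $D$-close to $[\go,\xi)_Y$, hence is $D$-consistent with a tuple realized along $\res(\go,\xi)$, so the HHS realization theorem places $z$ boundedly close to $\res(\go,\xi)$.

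For the core estimate, fix $x,y\in G$ with $d_G(x,y)\le D_1\, d_G\big(x,\calZ(\go,\xi)\big)$, where $D_1$ is from Proposition~\ref{P:bounded-proj} (note $d_G(x,\calZ(\go,\xi))\le d_G(x,\res(\go,\xi))$ since $\res(\go,\xi)\subseteq\calZ(\go,\xi)$, so Proposition~\ref{P:bounded-proj} applies), and write $x_\gamma:=\Pi_\xi(x)=\cent(\go,x,\xi)$, $y_\gamma:=\Pi_\xi(y)=\cent(\go,y,\xi)$. By Proposition~\ref{P:bounded-proj}, $d_S\big(\pi_S(x_\gamma),\pi_S(y_\gamma)\big)\le D_2$. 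Set $R:=d_S\big(\go_S,\pi_S(x_\gamma)\big)$; since $\pi_S(x_\gamma)$ lies boundedly close to $[\go_S,\pi_S(x)]_S$ we get $R\prec d_G(\go,x)$, and $\pi_S(y_\gamma)$ sits within $D_2$ of parameter $R$ on $[\go_S,\xi)_S$. Now take any non-maximal $Y\in\Lambda$ with $d_Y(x_\gamma,y_\gamma)$ exceeding the bounded-geodesic-image threshold: by \eqref{eqn:4} the geodesic $[\pi_S(x_\gamma),\pi_S(y_\gamma)]_S$, of length $\le D_2$ and based near parameter $R$ on $[\go_S,\xi)_S$, meets $\mathcal N_E(\rho^Y_S)$, so $\Norm{Y}_S=d_S(\go_S,\rho^Y_S)=R+O(1)$. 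Since $x_\gamma,y_\gamma\in\calZ(\go,\xi)$, their $\calC Y$-coordinates are $D$-close to $[\go,\xi)_Y$, whose diameter is at most $c\log\Norm{Y}_S+O(1)$ because $\xi\in\calL$; hence $d_Y(x_\gamma,y_\gamma)\le 2D+c\log\Norm{Y}_S+O(1)\prec\log R$. Combining with the trivial bound on the remaining non-maximal $Y$, there is $E'\asymp\max\{E,\log R\}$ with $d_Y(x_\gamma,y_\gamma)\le E'$ for every $Y\ne S$.

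Feeding this into Proposition~\ref{P:hierarchy} applied to the pair $(x_\gamma,y_\gamma)$ gives
$$d_G(x_\gamma,y_\gamma)\prec d_S(x_\gamma,y_\gamma)\cdot (E')^p\prec D_2\cdot(\log R)^p\prec\log^p(R)\prec\log^p\big(d_G(\go,x)\big)=\kappa\big(d_G(\go,x)\big),$$
where the last two steps absorb multiplicative and additive constants inside the argument using that $\log^p$ is concave and sublinear. This is exactly the $\kappa$-weak-contraction inequality for $\calZ(\go,\xi)$ with $\kappa(r)=\log^p(r)$. Running the same argument with $\res(\go,\xi)$ in place of $\calZ(\go,\xi)$ — legitimate since the two are within bounded Hausdorff distance, so their nearest-point projections agree coarsely — shows $\res(\go,\xi)$ is $\kappa$-weakly contracting, hence $\kappa$-Morse by \cite{QRT20}.

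The step I expect to be the main obstacle is the middle one: pinning down $\Norm{Y}_S\asymp R$ for precisely the coordinates $Y$ that contribute to $d_G(x_\gamma,y_\gamma)$ — this is where bounded geodesic image \eqref{eqn:4} is essential — and then checking that the logarithmic control furnished by $\xi\in\calL$ propagates through Proposition~\ref{P:hierarchy} to yield exactly the gauge $\log^p$ and not a larger power. A related subtlety to watch is that the scale $R$ appearing as the argument of $\kappa$ really is comparable to $\Norm{\Pi_\xi(x)}_S\asymp d_G(\go,x)$, which is what makes $R$ the correct input to the weak-contraction bound.
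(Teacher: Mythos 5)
Your treatment of the first half (weak contraction of $\calZ(\go,\xi)$) is essentially the paper's argument: use Proposition~\ref{P:bounded-proj} to bound $d_S(\Pi_\xi(x),\Pi_\xi(y))$ by $D_2$, use bounded geodesic image to localize the domains $Y$ with $d_Y(\Pi_\xi(x),\Pi_\xi(y))$ large so that $\Norm{Y}_S\prec\Norm{x}_S$, invoke $\xi\in\calL$ to bound $|[\go,\xi)_Y|\prec_c\log\Norm{Y}_S$, and feed the resulting coordinatewise $\log$-bound into Proposition~\ref{P:hierarchy}. The paper phrases the bound as $\Norm{Y}_S\prec\Norm{x}_S$ rather than $\Norm{Y}_S=R+O(1)$, but that is cosmetic.

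The gap is in your passage from $\calZ(\go,\xi)$ to $\res(\go,\xi)$. You assert that $\calZ(\go,\xi)$ lies within \emph{uniformly bounded} Hausdorff distance of $\res(\go,\xi)$, citing the realization theorem, and then transfer weak contraction by ``nearest-point projections agree coarsely.'' This is false as stated, and the realization theorem does not give it. Given $z\in\calZ(\go,\xi)$ with shadow at parameter $t$ on $[\go,\xi)_S$ and a point $z'\in\res(\go,\xi)$ with the same shadow parameter, consistency only pins down $z_Y$ near $z'_Y$ for domains $Y$ whose $\rho^Y_S$ lies far from $t$. For the (generically many) domains with $\Norm{Y}_S$ near $t$, the only constraint is that both $z_Y$ and $z'_Y$ lie within $D$ of $[\go,\xi)_Y$, a segment of length up to $c\log\Norm{Y}_S\asymp c\log t$ --- so $z_Y$ and $z'_Y$ may be at opposite ends. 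Running these through Proposition~\ref{P:hierarchy} gives $d_G(z,z')\prec\log^{p}t$, which grows without bound along the ray. This is exactly the estimate the paper carries out explicitly in the second half of its proof (``Since $y,z\in\calZ(\go,\xi)$, we have for every element of $\Lambda$ that $d_Y(y,z)\prec_c\log\max(\Norm{y}_S,\Norm{z}_S)$''), and it is logarithmic, not constant. The paper therefore does \emph{not} deduce Morseness of $\res(\go,\xi)$ from a bounded-Hausdorff claim; instead it shows directly that any test quasi-geodesic $\beta$ entering the $\kappa'$-neighborhood of $\res$ is trapped in the $\kappa$-neighborhood of $\calZ$ (via $\res\subset\calZ$), then uses the $\log^{p}$ estimate above to show that the witnessing points of $\calZ$ near $\beta$ are themselves within $O_c(\kappa)$ of $\res$, so $\beta|_r\subset\calN_\kappa(\res, O_c(m_\calZ))$. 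To repair your argument you would need to replace the false bounded-Hausdorff claim with the weaker (and correct) statement that at scale $t$ the set $\calZ$ lies in a $\log^{p}(t)$-neighborhood of $\res$, and then re-run the weak-contraction/Morse deduction carefully with this growing error --- which is, in effect, what the paper does.
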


\begin{proof}
In this proof, we use the notations $\prec_c$ and $O_c$ to mean that the implicit constants additionally depend on $c$. 
Let $\calZ= \calZ(\go, \xi)$. Given $x, x' \in X$ where  
\[
D_1 \cdot d_G(x, x') < d_G(x, \calZ),
\]
let $y = \Pi_\xi(x)$, $y' = \Pi_\xi(x')$. We claim that, for every proper element of $\Lambda$,
\[
d_Y(y, y') \prec_c \log \Norm{x}_S. 
\]

Since $\calC S$ is hyperbolic, nearest point projection in $\calC S$ is coarsely distance decreasing, hence $\Norm{y}_S \prec \Norm{x}_S$. Also, by 
Theorem \ref{P:bounded-proj} 
\begin{equation} \label{eq:DR}
d_{S}(y, y') \leq D_2
\end{equation}
therefore, $\Norm{y'}_S \prec \Norm{x}_S$. Which means, for every curve $\alpha$ in the 
geodesic segment $[y, y']_S$ in $\calC S$ we have 
$d_S(\go, \alpha) \prec \Norm{x}_S$. Axiom (7) (Bounded Geodesic Image)
implies that if $d_Y(y, y')$ is large then $d_S([y, y']_S, \partial Y) \prec 1$, hence 
\[
\Norm{Y}_S \prec \Norm{x}_S. 
\]

By the definition of $\Pi_\xi$, $y_Y$ and $y'_Y$ are $D$-close to the geodesic
segment $[\go, \xi]_Y$ in $\calC Y$ and, by assumption, the length of this 
segment is at most a uniform multiple of $\log \Norm{Y}_S$. 
Therefore, 
\[
d_Y(y, y') \prec \big| [\go, \xi]_Y\big| \prec_c \log \Norm{Y}_S \prec \log \Norm{x}_S. 
\]
In view of \eqnref{eq:DR} and Proposition \ref{P:hierarchy}, we get 
\[
d_G(y,y') \prec_c  \log^p \Norm{x}_S. 
\]
Now, by Theorem A.1 in \cite{QRT20}, $\calZ(\go, \xi)$ is 
$\kappa$-Morse. Let $m_\calZ$ be the associated Morse gauge for $\calZ(\go, \xi)$. 

Now we show $\res(\go, \xi)$ is also $\kappa$-Morse. Assume $\kappa'$ and 
$r>0$ be given (see \defref{D:k-morse}) and, using the fact that 
$\calZ(\go, \xi)$ is $\kappa$-Morse, let $R$ be a radius such that, for any 
$(q,Q)$-quasi-geodesic ray 
$\beta$ in $G$ with $m_\calZ(q,Q)$ small compared to $r$, we have 
\[
d_G(\beta_R, \calZ(\go, \xi) ) \leq \kappa'(R) 
\qquad \Longrightarrow\qquad
\beta|_r \subset \calN_\kappa (\calZ(\go, \xi), m_\calZ(q,Q)). 
\]
Also, assume 
\[
d_G(\beta_R, \res(\go, \xi) ) \leq \kappa'(R). 
\]
We need to show that every $x \in \beta|_r$ is close to $\res(\go, \xi)$. 

Since $\res(\go, \xi) \subset \calZ(\go, \xi)$ we can still conclude that 
there is a point $y \in \calZ(\go, \xi)$ with
\[
d_G(x,y) \leq m_\calZ(q, Q) \cdot \kappa(x). 
\]
In fact $y$ can be taken to be $\Pi_\xi(x)$ and hence $\Norm{y}_S \prec \Norm{x}_S$. 
Let $z$ be a point in $\res(\go, \xi)$ where $d_S(z_S , y_S) \leq D$
(such a point exists since the shadow of $\res(\go,\xi)$ to $\calC S$ is the
geodesic ray $[\go, \xi)_S$). Since $y,z \in \calZ(\go, \xi)$, we have for every 
element of $\Lambda$ that 
\[
d_Y(y,z) \prec_c \log \max(\Norm{y}_S, \Norm{z}_S) \prec \log (\Norm{x}_S + D) 
 \prec \log \Norm{x}_S.
\]
Therefore, by \propref{P:hierarchy}, we have 
\[
d_G(y,z) \prec_c  \log^p \Norm{x}_S \prec \kappa(x).
\]
And hence, 
\[
d_G(x,z) \leq d_G(x,y) + d_G(y,z) \prec_c m_\calZ(q,Q) \cdot \kappa(x). 
\]
We have shown 
\[
\beta|_r \subset \calN_\kappa \Big(\res(\go, \xi), O_c\big(m_\calZ(q,Q)\big)\Big). 
\]
That is, $\res(\go, \xi)$ is $\kappa$-Morse with a Morse gauge 
$m_\res = O_c(m_\calZ)$. 
\end{proof}

\subsection{Convergence to the $\kappa$-Morse boundary}

Let $\mu$ be a probability measure on $G$. We say that $\mu$ is \emph{non-elementary} if the semigroup generated by its support 
contains two loxodromic elements with disjoint fixed sets in $ \partial \calC S$.

Let us recall some useful facts on random walks on any HHS group.

\begin{theorem} \label{T:RW}
Let $\mu$ be a finitely supported, non-elementary probability measure on an HHS group $G$. Then: 
\begin{enumerate}
\item
For almost every sample path $\omega = (w_n)$, the sequence $(w_n)_S$ converges to a point $\xi_\omega$ in the Gromov boundary of $\calC S$. 
\item
Moreover, there exists $l > 0, c < 1$ such that 
$$\mathbb{P}\left( d_S(\go, w_n) \geq  l n  \right) \geq 1 - c^n$$
for any $n$.
\item
Further, for any $k > 0$ there exists $C > 0$ such that 
$$\mathbb{P}\left( d_S(w_n, \gamma_\omega) \geq C \log n \right) \leq n^{-k}$$
for any $n$, where $\gamma_\omega = [\go, \xi_\omega)_S$. 
\end{enumerate}
\end{theorem}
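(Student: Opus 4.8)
The plan is to reduce all three assertions to the standard theory of random walks on (not necessarily locally compact) Gromov hyperbolic spaces, applied to the isometric action of $G$ on the top-level hyperbolic space $\calC S$, and to observe that nothing in that theory uses anything about $\calC S$ beyond $\delta$--hyperbolicity and non-elementarity of the $G$--action; in other words, this is the HHS version of the curve-graph facts recalled in Section~8 of \cite{QRT20}. First I would record the setup: $\pi_S$ is coarsely $G$--equivariant and coarsely Lipschitz, so $G$ acts by isometries on $\calC S$ (and, replacing $\calC S$ by the quasi-convex hull of the countable orbit $\pi_S(G\go)$, we may take $\calC S$ separable), and the hypothesis that $\mu$ is non-elementary says precisely that the push-forward $(\pi_S)_\star\mu$ drives a non-elementary walk on $\calC S$ — its support generates a semigroup containing two loxodromic isometries of $\calC S$ with disjoint fixed point sets.

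With this in hand, part (1) is the boundary convergence theorem for random walks on weakly hyperbolic groups (Maher--Tiozzo): for $\mathbb{P}$--a.e.\ $\omega=(w_n)$ the shadow $(w_n)_S$ converges to some $\xi_\omega\in\partial\calC S$, and the hitting measure $\nu_S$ on $\partial\calC S$ is non-atomic. Part (2) is linear progress with exponential decay: since $\mu$ is finitely supported (so all moment conditions hold automatically) and the action on $\calC S$ is non-elementary, the walk has positive drift in $\calC S$ (that is, $\liminf_n d_S(\go,w_n)/n>0$ almost surely) and there are $l>0$, $c<1$ with $\mathbb{P}\big(d_S(\go,w_n)\le l n\big)\le c^n$ for all $n$; this is the hyperbolic-space large-deviation estimate, again as used in \cite{QRT20}.

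For part (3) I would derive the estimate from (1), (2) and $\delta$--hyperbolicity of $\calC S$. In a $\delta$--hyperbolic space $d_S(w_n,\gamma_\omega)$ agrees, up to an additive $O(\delta)$, with the Gromov product $(\go\mid\xi_\omega)_{w_n}$ based at $\pi_S(w_n)$, so $d_S(w_n,\gamma_\omega)\ge t$ forces the trajectory to backtrack by roughly $t$ near time $n$ relative to its eventual direction $\xi_\omega$. Using the Markov property to split the path at time $n$ — so that $\big(w_n^{-1}w_{n+m}\big)_m$ is a fresh $\mu$--walk limiting to $w_n^{-1}\xi_\omega$ — and applying the exponential deviation bound of (2) to the increments after time $n$ (and the reflected walk to those before), one obtains constants $A,b>0$, independent of $n$, with $\mathbb{P}\big(d_S(w_n,\gamma_\omega)\ge t\big)\le A e^{-bt}$. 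Given $k$, choosing $C=C(k)$ with $bC\ge k$ then gives $\mathbb{P}\big(d_S(w_n,\gamma_\omega)\ge C\log n\big)\le A n^{-bC}\le n^{-k}$ for all sufficiently large $n$ (the finitely many exceptional $n$ being absorbed by enlarging $C$); this is the Mathieu--Sisto deviation inequality specialized to $\calC S$.

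The point to be careful about — rather than any single deep step — is that one is invoking the \emph{non-proper} versions of these random-walk theorems, so one must verify that $\calC S$ is separable, that the induced $G$--action on it is genuinely non-elementary, and that the deviation bound in (3) is uniform in $n$; all of this is routine given the HHS axioms and the non-elementarity hypothesis on $\mu$. Once these checks are made, the proof is a transcription of the mapping-class-group argument of \cite{QRT20} with $\calC S$ in place of the curve graph.
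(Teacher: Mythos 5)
Your proposal takes essentially the same route as the paper: the paper proves the theorem by citing Maher (for (2)) and Maher--Tiozzo (for (1) and (3)) applied to the isometric $G$--action on the top-level hyperbolic space $\calC S$, which is exactly the reduction you carry out. Your extra checks — separability of $\calC S$ after restricting to the orbit's quasi-convex hull, that non-elementarity of $\mu$ transfers to a non-elementary action on $\calC S$, and the Markov-decomposition sketch behind the logarithmic deviation in (3) — are a correct unpacking of what makes the cited theorems apply, but they are not a different argument; (3) in particular is indeed the Maher--Tiozzo deviation bound (the Mathieu--Sisto inequality is a closely related statement and either reference works).
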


Claim (2) is proven by Maher (\cite{Maher}, \cite{MaherExp}), while (1) and (3) are proven by Maher-Tiozzo in \cite{MaherTiozzo}. 
Also, exactly the same proof as in \cite[Theorem A.17]{QRT19} yields for HHS groups that for  finitely supported, non-elementary probability measure on $G$.
Then for any $k > 0$ there exists $C > 0$ such that for all $n$ we have
$$\mathbb{P}\left(\sup_Y d_{Y}(\go, w_n) \geq C \log n \right) \leq C n^{-k},$$
where the supremum is taken over all (proper) elements of $\Lambda$ of $S$. 
As a consequence, for almost every sample path there exists $C > 0$ such that for all $n$
$$\sup_Y d_{Y}(\go, w_n ) \leq C \log n.$$

It follows that almost every sample path converges to a point in the $\kappa$-Morse boundary of the HHS group, 
where $\kappa(r) = \log^p(r)$. We now complete the proof of Theorem \ref{T:PB-intro} by identifying the $\kappa$-Morse boundary with the Poisson boundary. 

\begin{theorem}

 \label{T:mcg-poiss}
Let $\mu$ be a non-elementary, finitely supported measure on an HHS group $G$. Then for $\kappa(r) := \log^p(r)$, where $p$ is the complexity of the hierarchy, the $\kappa$-Morse boundary is a topological model for the Poisson boundary of $(G, \mu)$. 
\end{theorem}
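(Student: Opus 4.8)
The plan is to verify, for $\mu$-almost every sample path, the sublinear-tracking hypothesis \eqnref{E:sub-track} of \thmref{T:poiss-general} with $\kappa(r)=\log^{p}(r)$, and then invoke that theorem. Throughout, $\prec$ and $\asymp$ are used as in the propositions above. \emph{Step 1 (the forward limit direction is almost surely logarithmic).} For $\mu$-a.e.\ $\omega=(w_n)$ the shadow $(w_n)_S$ converges to a point $\xi_\omega\in\partial\calC S$ by \thmref{T:RW}(1); I claim $\xi_\omega\in\calL$ almost surely. Two inputs: by \thmref{T:RW}(2) almost surely $d_S(\go,w_n)\succ n$ eventually, and by the consequence of \cite[Theorem A.17]{QRT19} recorded after \thmref{T:RW} almost surely $\sup_Y d_Y(\go,w_n)\prec\log n$ over proper $Y\in\Lambda$. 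Fix a proper $Y$. The ray $\gamma_\omega:=[\go,\xi_\omega)_S$ passes within bounded distance of $\rho^Y_S$ at parameter $\asymp\Norm{Y}_S$; by linear progress this happens by step $n$ with $n\asymp\Norm{Y}_S$, and the Bounded Geodesic Image axiom together with the logarithmic bound gives $d_Y(\go,\xi_\omega)\prec\log n\prec\log\Norm{Y}_S$, which is \eqnref{E:log-proj} with a uniform constant $c$ (depending only on the almost-sure constants). By \propref{P:contracting-morse}, $\calZ(\go,\xi_\omega)$ is then $\kappa$-weakly contracting and any realization $\res(\go,\xi_\omega)$ of $H(\go,\xi_\omega)$ is $\kappa$-Morse, $\kappa(r)=\log^{p}(r)$.

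\emph{Step 2 (sublinear tracking of $\res(\go,\xi_\omega)$).} Put $\eta_n:=\Pi_{\xi_\omega}(w_n)=\cent(\go,w_n,\xi_\omega)\in\calZ(\go,\xi_\omega)$. In each $\calC Y$ the center $\eta_n$ is $\delta$-close to the side $[\go,w_n]_Y$, so $d_Y(w_n,\eta_n)\le d_Y(\go,w_n)+O(\delta)\prec\log n$ for all proper $Y$, while $\eta_n$ being $\delta$-close to $[\go,\xi_\omega)_S$ gives $d_S(w_n,\eta_n)\prec d_S(w_n,\gamma_\omega)+O(1)\prec\log n$ by \thmref{T:RW}(3). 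Feeding these into \propref{P:hierarchy} with $E\asymp\log n$ yields $d_G(w_n,\eta_n)\prec\log^{p+1}n$. Next choose $z\in\res(\go,\xi_\omega)$ with $d_S(z_S,(\eta_n)_S)\le D$; since $\eta_n,z\in\calZ(\go,\xi_\omega)$ and $\Norm{\eta_n}_S,\Norm{z}_S\prec n$, the estimate from the last part of the proof of \propref{P:contracting-morse} gives $d_Y(\eta_n,z)\prec\log n$ for all proper $Y$ and $d_S(\eta_n,z)\le D$, whence $d_G(\eta_n,z)\prec\log^{p+1}n$ by \propref{P:hierarchy} again. Therefore $d_G\big(w_n,\res(\go,\xi_\omega)\big)\prec\log^{p+1}n=o(n)$, which is exactly \eqnref{E:sub-track}.

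\emph{Step 3 (conclusion).} Since $\mu$ is finitely supported it has finite first moment with respect to $d_G$, and since $\mu$ is non-elementary the semigroup generated by its support is non-amenable (it contains a rank-two free sub-semigroup obtained by ping-pong with two independent loxodromics on $\calC S$). Replacing $\res(\go,\xi_\omega)$, if necessary, by a sublinearly-equivalent $\kappa$-Morse geodesic ray (available from the $\kappa$-Morse theory of \cite{QRT20}), \thmref{T:poiss-general} applies with $X$ the Cayley graph of $G$ and gives that almost every sample path converges to a point of $\pka G$ and that $(\pka G,\nu)$, with $\nu$ the hitting measure, is a topological model for the Poisson boundary of $(G,\mu)$.

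I expect the main obstacle to be Step 1: one must carefully match the two distinct parametrizations — the random-walk time $n$ and the coarse position $\Norm{Y}_S$ along $\gamma_\omega$ — so that the almost-sure bound $\sup_Y d_Y(\go,w_n)\prec\log n$ converts into the defining condition $d_Y(\go,\xi_\omega)\prec\log\Norm{Y}_S$ of $\calL$ with constants uniform over $Y$, and then check that propagating logarithmic bounds through the Distance Formula (\propref{P:hierarchy}) costs only the single extra power predicted by the complexity, so that $\log^{p+1}n$ remains $o(n)$. The geodesic-versus-quasi-geodesic bookkeeping in Step 3 is routine given \cite{QRT20}.
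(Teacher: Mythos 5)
Your proof takes essentially the same route as the paper's: the paper's own proof is the one-line assertion that almost every sample path sublinearly tracks a $\kappa$-Morse geodesic ray (relying on \thmref{T:RW}, the logarithmic-projection bound, \propref{P:contracting-morse}, and then invoking \thmref{T:poiss-general}), and your Steps 1--3 simply spell out the tracking argument that the paper leaves implicit. The details you supply — matching the random-walk clock $n$ to the coarse parameter $\Norm{Y}_S$ to verify \eqnref{E:log-proj}, and the $d_G(w_n,\eta_n)\prec\log^{p+1}n$ estimate via centers and \propref{P:hierarchy} — are the correct ones and consistent with the paper's cited inputs.
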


\begin{proof}
Since almost every sample path sublinearly tracks a $\kappa$-Morse geodesic ray, with $\kappa(r) = \log^p(r)$, 
so we can apply Theorem \ref{T:poiss-general}.
\end{proof}

\bibliographystyle{alpha}

\end{document}